\def\A{\mathbf{A}}       % diffusion tensor
\def\b{\mathbf{b}}       % convection vector
\def\r{c}                % reaction coefficient
\def\subsection#1
\bf\arabic{section}.\arabic{subsection}.~#1.~}
\def\subsubsection#1{\bigskip \noindent{\bf#1.}}
\newtheorem{theorem}{Theorem}
\newtheorem{lemma}[theorem]{Lemma}
\newtheorem{corollary}[theorem]{Corollary}
\newtheorem{proposition}[theorem]{Proposition}
\newtheorem{algorithm}[theorem]{Algorithm}
\newtheorem{remark}[theorem]{Remark}
\def\eps{\varepsilon}
\def\set#1#2{\big\{#1\,:\,#2\big\}}
\newcommand{\norm}[3][]{#1\|#2#1\|_{#3}}
\newcommand{\enorm}[2][]{#1|\hspace*{-.3mm}#1|\hspace*{-.3mm}#1|#2#1|\hspace*{-.3mm}#1|\hspace*{-.3mm}#1|}
\def\div{{\rm div}}
\def\osc{ {\rm osc} }
\newcounter{constantsnumber}
\def\setc#1{
  \ifthenelse{\equal{#1}{rel}}{C_{\rm rel}}{ % realiability of \mu_\ell
  \ifthenelse{\equal{#1}{reduction}}{C_{\rm est}}{ % estimator reduction
  \ifthenelse{\equal{#1}{invest}}{C_{\rm Vinv}}{ % inverse estimate for V
  \ifthenelse{\equal{#1}{trace}}{C_{\rm trace}}{ % trace inequality
  \ifthenelse{\equal{#1}{caccioppoli}}{C_{\rm cacc}}{ % caccioppoli inequality
  \ifthenelse{\equal{#1}{farfield}}{C_{\rm far}}{ % far field bound
  \ifthenelse{\equal{#1}{nearfield}}{C_{\rm near}}{ % near field bound
  \ifthenelse{\equal{#1}{nearfield2}}{C_{\rm near2}}{ % discrete nearfield bound
  \ifthenelse{\equal{#1}{nvb}}{C_{\rm nvb}}{ % optimality of mesh closure
  \ifthenelse{\equal{#1}{optimal}}{C_{\rm opt}}{ % optimality of algorithm
  \ifthenelse{\equal{#1}{eff}}{C_{\rm eff}}{ % efficiency of \mu_\ell
  \ifthenelse{\equal{#1}{osc}}{C_{\rm osc}}{ % \osc \lesssim \mu_\ell
  \ifthenelse{\equal{#1}{galerkin}}{C_{\rm gal}}{ % \osc \lesssim \mu_\ell
  \ifthenelse{\equal{#1}{dlr}}{C_{\rm dlr}}{ % discrete local reliability
  \ifthenelse{\equal{#1}{diff1}}{C_{\A,1}}{ %  diffusion constant (lower)
  \ifthenelse{\equal{#1}{diff2}}{C_{\A,2}}{ %  diffusion constant (upper)
  \ifthenelse{\equal{#1}{convreact}}{C_{\b\r}}{ %  convection reaction constant
  \ifthenelse{\equal{#1}{bil}}{C_{\rm bil}}{ %  estimate the bilinear form (FEM-FVM)
  \refstepcounter{constantsnumber}
  \label{const#1}C_{\theconstantsnumber}}}}}}}}}}}}}}}}}}}}
\def\c#1{
  \ifthenelse{\equal{#1}{rel}}{C_{\rm rel}}{ % realiability of \mu_\ell
  \ifthenelse{\equal{#1}{reduction}}{C_{\rm est}}{ % estimator reduction
  \ifthenelse{\equal{#1}{invest}}{C_{\rm Vinv}}{ % inverse estimate for V
  \ifthenelse{\equal{#1}{trace}}{C_{\rm trace}}{ % trace inequality
  \ifthenelse{\equal{#1}{caccioppoli}}{C_{\rm cacc}}{ % caccioppoli inequality
  \ifthenelse{\equal{#1}{farfield}}{C_{\rm far}}{ % far field bound
  \ifthenelse{\equal{#1}{nearfield}}{C_{\rm near}}{ % near field bound
  \ifthenelse{\equal{#1}{nearfield2}}{C_{\rm near2}}{ % discrete nearfield bound
  \ifthenelse{\equal{#1}{nvb}}{C_{\rm nvb}}{ % optimality of mesh closure
  \ifthenelse{\equal{#1}{optimal}}{C_{\rm opt}}{ % optimality of algorithm
  \ifthenelse{\equal{#1}{eff}}{C_{\rm eff}}{ % efficiency of \mu_\ell
  \ifthenelse{\equal{#1}{galerkin}}{C_{\rm gal}}{ % \osc \lesssim \mu_\ell
  \ifthenelse{\equal{#1}{osc}}{C_{\rm osc}}{ % \osc \lesssim \mu_\ell
  \ifthenelse{\equal{#1}{dlr}}{C_{\rm dlr}}{ % discrete local reliability
  \ifthenelse{\equal{#1}{diff1}}{C_{\A,1}}{ %  diffusion constant (lower)
  \ifthenelse{\equal{#1}{diff2}}{C_{\A,2}}{ %  diffusion constant (upper)
  \ifthenelse{\equal{#1}{convreact}}{C_{\b\r}}{ %  convection reaction constant 
  \ifthenelse{\equal{#1}{bil}}{C_{\rm bil}}{ %  estimate the bilinear form (FEM-FVM) 
  C_{\ref{const#1}}}}}}}}}}}}}}}}}}}}}
\def\<{\langle\hspace*{-.9mm}\langle}
\def\>{\rangle\hspace*{-.9mm}\rangle}
\def\product#1#2{(#1\hspace*{.5mm},#2)}
\def\N{{\mathbb N}}
\def\R{{\mathbb R}}
\def\T{{\mathbb T}}
\def\nf{{\mathbf n}}
\def\AA{{\mathcal A}}  % CE: bilinear form
\def\CC{{\mathcal C}}
\def\II{{\mathcal I}}
\def\MM{{\mathcal M}}
\def\NN{{\mathcal N}}
\def\OO{{\mathcal O}}
\def\PP{{\mathcal P}}
\def\RR{{\mathcal R}}
\def\SS{{\mathcal S}}
\def\TT{{\mathcal T}}
\def\ZZ{{\mathcal Z}}
\def\refine{{\tt refine}}
\def\EEt{\EE_{T}}               % edges of an element
\def\[{[\hspace*{-0.8mm}[}
\def\]{]\hspace*{-0.8mm}]}
\newcommand{\jump}[2][]{%
 \ifthenelse{\equal{#1}{}}
 {[\hspace*{-0.8mm}[#2]\hspace*{-0.8mm}]}%
 {\left[\hspace*{-1.6mm}\left[#2\right]\hspace*{-1.6mm}\right]}%
}
\def\diam{{\rm diam}}
\def\Cmark{C_{\rm mark}}
\def\Clin{C_{\rm lin}}
\def\qlin{q_{\rm lin}}
\def\Cest{C_{\rm est}}
\def\qest{q_{\rm est}}
\def\Copt{C_{\rm opt}}
\def\Crel{C_{\rm rel}}
\def\Ctot{C_{\rm tot}}
\def\Cmns{C_{\rm MNS}}%
\def\UU{\mathcal U}
\def\refine{{\tt refine}}
\def\EE{{\mathcal F}}
\def\patch{\Omega}
\def\facet{F}
\def\star{\times}
\title[Adaptive finite volume methods]{Adaptive vertex-centered finite volume methods\\ with convergence rates}
\author{Christoph Erath}
\address{TU Darmstadt, Department of Mathematics, Dolivostra\ss{}e 15, 64293 Darmstadt, Germany}
\email{Erath@mathematik.tu-darmstadt.de\quad\rm(corresponding author)}
\author{Dirk Praetorius}
\address{TU Wien, Institute for Analysis and Scientific Computing,
Wiedner Hauptstra\ss{}e 8-10, 1040 Wien, Austria}
\email{Dirk.Praetorius@tuwien.ac.at}
\thanks{C. Erath (corresponding author): TU Darmstadt, Germany; Erath@mathematik.tu-darmstadt.de}
\thanks{D. Praetorius: TU Wien, Austria; Dirk.Praetorius@tuwien.ac.at}
\thanks{The second author acknowledges support through the research project
\emph{Optimal adaptivity for BEM and FEM-BEM Coupling} funded by the Austrian Science Fund (FWF) under
grant P27005}
\date{\bf\color{red}\today}
\begin{document}

\begin{abstract}
We consider the vertex-centered finite volume method with first-order conforming ansatz functions. The adaptive mesh-refinement is driven by the local contributions of the weighted-residual error estimator. We prove that the adaptive algorithm leads to linear convergence with generically optimal algebraic rates for the error estimator and the sum of energy error plus data oscillations. While similar results have been derived for finite element methods and boundary element methods, the present work appears to be the first for adaptive finite volume methods, where the lack of the classical Galerkin orthogonality leads to new challenges.
\end{abstract}

\maketitle

%%%%%%%%%%%%%%%%%%%%%%%%%%%%%%%%%%%%%%%%%%%%%%%%%%%%%%%%%%%%%%%%%%%%%%%%%%%%%%%%
%%%%%%%%%%%%%%%%%%%%%%%%%%%%%%%%%%%%%%%%%%%%%%%%%%%%%%%%%%%%%%%%%%%%%%%%%%%%%%%%
\section{Introduction}
%%%%%%%%%%%%%%%%%%%%%%%%%%%%%%%%%%%%%%%%%%%%%%%%%%%%%%%%%%%%%%%%%%%%%%%%%%%%%%%%
%%%%%%%%%%%%%%%%%%%%%%%%%%%%%%%%%%%%%%%%%%%%%%%%%%%%%%%%%%%%%%%%%%%%%%%%%%%%%%%%

%===============================================================================
\subsection{Finite volume method}
%===============================================================================
A classical finite volume method (FVM) describes numerically a conservation 
law of an underlying model problem, which
might be described by a partial differential equation (PDE). 
In particular, it naturally preserves local conservation of the numerical fluxes. Therefore, FVMs are
well-established in the engineering community (fluid mechanics). 
Even though the FVM has a wide range of applications the numerical analysis is less developed 
than for the more prominent finite element method (FEM).
There exist different versions of the FVM like the 
cell-centered FVM, which basically yields to 
a piecewise constant approximation of the unknown solution on a primal mesh.
For more details we refer to~\cite{Eymard:2000-book}. 
The so-called vertex-centered FVM (finite volume element method, box method) belongs to the other big
family of FVMs, where one usually introduces an additional dual mesh around the nodes for the approximation.
In this work, we focus on the lowest-order vertex-centered finite volume method (from now on only FVM) 
for some elliptic model problem in $\R^d$, $d=2,3$.
The first relevant mathematical analysis of this method 
started with the works~\cite{Bank:1987-1, Hackbusch:1989-1,Cai:1991-1}.

%===============================================================================
\subsection{A~posteriori error estimation and adaptive mesh-refinement}
%===============================================================================
Accurate {\sl a~posteriori} error estimation and related adaptive mesh-refinement is one fundamental column of modern scientific computing. On the one hand, the {\sl a~posteriori} error estimator allows to monitor whether a numerical approximation is sufficiently accurate, even though the exact solution is unknown. On the other hand, it allows to adapt the discretization to resolve possible singularities most effectively. 
Over the last few years, the mathematical understanding of adaptive mesh-refinement has matured. It has been proved that adaptive procedures for the finite element method (FEM) as well as for the boundary element method (BEM) lead to optimal convergence behavior of the numerical scheme; see, 
e.g.,~\cite{doerfler96,mns00,bdd04,stevenson07,ckns,ffp14} for FEM, \cite{fkmp,part1,part2,gantumur} for BEM, and~\cite{axioms} for some general framework.

In this work, we analyze an adaptive mesh-refining algorithm of the type
\begin{align}\label{eq:semr}
\boxed{\tt SOLVE} 
\quad\Longrightarrow\quad
\boxed{\tt ESTIMATE}
\quad\Longrightarrow\quad
\boxed{\tt MARK} 
\quad\Longrightarrow\quad
\boxed{\tt REFINE}
\end{align}
in the frame of the FVM (Algorithm~\ref{algorithm:mns}). 
Given a conforming triangulation $\TT_\ell$, the module \texttt{SOLVE} uses FVM to compute a discrete approximation $u_\ell$ to the solution $u$ of the PDE. For the ease of presentation, we assume that the linear system is solved exactly, although, in the spirit of~\cite[Section 7]{axioms}, stopping criteria for iterative solvers can be included into our analysis. The module \texttt{ESTIMATE} employs a weighted-residual error estimator $\eta_\ell$ 
from~\cite{Carstensen:2005-1,xzz06} which is also well-studied in the context of adaptive finite element methods~\cite{stevenson07,ckns,ffp14}. The module \texttt{MARK} uses the D\"orfler marking criterion introduced in~\cite{doerfler96}, to mark elements for refinement, where the local error appears to be large. Unlike common algorithms for FEM and BEM, we follow~\cite{mns00} and also mark elements with respect to the data oscillations to overcome the lack of the Galerkin orthogonality. Finally, the module \texttt{REFINE} employs newest vertex bisection (NVB) to refine the marked elements and to generate a new conforming triangulation $\TT_{\ell+1}$ which better resolves the present singularities.

%===============================================================================
\subsection{Contributions of the present work}
%===============================================================================
Iteration of the adaptive loop~\eqref{eq:semr} provides a sequence of successively refined triangulations $\TT_\ell$ together with the corresponding FVM solutions $u_\ell$ and the {\sl a~posteriori} error estimators $\eta_\ell$. Theorem~\ref{theorem:mns} below proves that this adaptive iteration leads to linear convergence in the sense of
\begin{align}\label{eq:intro1}
 \eta_{\ell+n} \le Cq^n\,\eta_\ell
 \quad\text{for all }\ell,n\in\N_0
\end{align}
with some independent constants $C>0$ and $0<q<1$. Under an additional assumption on the marking which can be monitored {\sl a~posteriori}, we prove optimal convergence behavior 
\begin{align}\label{eq:intro2}
 \eta_\ell \le C\,(\#\TT_\ell-\#\TT_0)^{-s}
\end{align}
for each ``possible'' algebraic rate $s>0$ (in the sense of certain nonlinear approximation classes which are defined in Section~\ref{section:main} below), where $\#\TT_\ell$ denotes the number of elements in $\TT_\ell$.
These results can be equivalently stated with respect to the sum of energy error plus data oscillations, which is usually done in the FEM literature~\cite{stevenson07,ckns,ffp14},
since
\begin{align}\label{eq:intro3}
 C^{-1}\,\eta_\ell \le \min_{v_\ell}\big(\enorm{u-v_\ell}+\osc_\ell(v_\ell)\big)
 \le \enorm{u-u_\ell}+\osc_\ell(u_\ell)
 \le C\,\eta_\ell;
\end{align}%
see Theorem~\ref{theorem:cea} below.
We note that~\eqref{eq:intro3} in particular provides a generalized C\'ea lemma which states that the FVM solution $u_\ell$ is quasi-optimal with respect to the so-called total error, i.e., the sum of energy error plus data oscillations. Since~\eqref{eq:intro3} is also known for 
the FEM (see, e.g.,~\cite[Lemma~5.1]{ffp14}), this reveals 
that FEM and FVM lead to equivalent errors in the sense of 
\begin{align}\label{eq:intro4}
 C^{-1}\,\big(\enorm{u-u_\ell}+\osc_\ell(u_\ell)\big)
 \le \enorm{u-u_\ell^{\rm FEM}} +\osc_\ell(u_\ell^{\rm FEM})
 \le C\,\big(\enorm{u-u_\ell}+\osc_\ell(u_\ell)\big),
\end{align}
where $u_\ell^{\rm FEM}$ is the FEM solution with respect to the FVM space.
This complements recent results which compare the total errors of different FEM discretizations~\cite{MR3022243,MR3349689}.

Unlike the results for FEM and BEM, the novel C\'ea-type estimate~\eqref{eq:intro3} as well as our result~\eqref{eq:intro1}--\eqref{eq:intro2} on adaptive FVM requires the additional assumption that the initial triangulation $\TT_0$ is sufficiently fine. We note, however, that such an assumption is also required to prove well-posedness of the FVM in general and thus appears naturally.
%
%\improve{appears naturally klingt eigenartig, nicht? Ich meine, falls man das anders beweisen koennte,
%waere es ja besser. Der Satz klingt so, als ob FVM nur das erlaubt, was wir allerings nicht wissen, oder? Was meint DP dazu?}

Prior to this work, {\sl a~posteriori} error estimates for the FVM for elliptic model problems are derived in~\cite{Carstensen:2005-1,xzz06,Zou:2009-1}; 
see also~\cite[Remark 6.1]{Erath:2013-1} and~\cite[Conclusions]{Erath:2013-1} 
for estimates which are robust with respect to the lower-order convection and reaction terms.
To the best of the authors' knowledge, convergence of an adaptive 2D FVM has only been analyzed in the yet unpublished preprint~\cite{xzz06}. 
%The latter follows the work~\cite{mns00} and uses a two-step D\"orfler marking criterion which ensures contraction of the error estimator as well as of the data oscillations. For adaptive FEM, it is shown in~\cite{ckns} that optimal convergence rates cannot be shown in general, as marking of the data oscillations may result in over refinement. Moreover, 
The latter is concerned with convergence only and 
the analysis follows~\cite{mns00} and relies on a discrete efficiency estimate and hence on the so-called interior node property of the mesh-refinement. Contrary to~\cite{xzz06}, our analysis extends the ideas of~\cite{ckns} and provides a contraction property for the weighted sum of energy error, weighted-residual error estimator, and data oscillations. Therefore, our analysis 
%relies on the standard marking strategy and 
covers in particular standard NVB, where marked elements are refined by one bisection.

We finally note that residual error estimators have also been developed for the cell-centered finite volume method~\cite{Nicaise:2005-1,Erath:2008-1,Vohralik:2008-1}.
These a~posteriori estimators rely on an interpolatory post-processing
of the original piecewise constant cell-centered finite volume approximation.
Thus, a thorough adaptive convergence analysis requires additional ideas
to extend and adapt the analysis presented below.
%===============================================================================
\subsection{General notation}
%===============================================================================
%By $L^m(\omega)$ and $H^m(\omega)$, $m>0$, we denote the standard Lebesgue and Sobolev spaces on a bounded Lipschitz domain $\omega\subset\R^d$, $d=2,3$.
%
We use $\lesssim$ to abbreviate $\le$ up to some (generic) multiplicative constant which is clear from the context. Moreover, $\simeq$ abbreviates that both estimates $\lesssim$ and $\gtrsim$ hold.
Throughout, the mesh-dependence of (discrete) quantities is explicitly stated by use of appropriate indices, e.g., $u_\star$ is the FVM solution for the triangulation $\TT_\star$ and 
$\eta_\ell$ is the error estimator with respect to the triangulation $\TT_\ell$.

%%%%%%%%%%%%%%%%%%%%%%%%%%%%%%%%%%%%%%%%%%%%%%%%%%%%%%%%%%%%%%%%%%%%%%%%%%%%%%%%
%%%%%%%%%%%%%%%%%%%%%%%%%%%%%%%%%%%%%%%%%%%%%%%%%%%%%%%%%%%%%%%%%%%%%%%%%%%%%%%%
\section{Model problem \& main results}
%%%%%%%%%%%%%%%%%%%%%%%%%%%%%%%%%%%%%%%%%%%%%%%%%%%%%%%%%%%%%%%%%%%%%%%%%%%%%%%%
%%%%%%%%%%%%%%%%%%%%%%%%%%%%%%%%%%%%%%%%%%%%%%%%%%%%%%%%%%%%%%%%%%%%%%%%%%%%%%%%
\label{section:results}%

%===============================================================================
\subsection{Model problem}
%===============================================================================
\label{section:modelproblem}%
Let $\Omega\subset \R^d$, $d=2,3$, be a bounded and connected Lipschitz domain with boundary $\Gamma:=\partial\Omega$.
As model problem, we consider the following stationary diffusion problem: Given $f\in L^2(\Omega)$, find $u\in H^1(\Omega)$ such that
\begin{subequations}
\label{eq:model}
\begin{align}
 -\div \A \nabla u  &= f \quad \text{in }\Omega,\\
                               u &= 0\quad \text{on }\Gamma.
\end{align}
\end{subequations}
We suppose that the diffusion matrix $\A=\A(x)\in\R^{d\times d}$ is bounded, symmetric, and uniformly positive definite, i.e., there exist constants $\lambda_{\rm min},\lambda_{\rm max}>0$ such that
\begin{align}\label{eq:A}
 \lambda_{\rm min}\,|\mathbf{v}|^2\leq \mathbf{v}^T\A(x)\mathbf{v}\leq \lambda_{\rm max}\,|\mathbf{v}|^2
 \quad\text{for all } \mathbf{v}\in \R^d \text{ and almost all }x\in\Omega.
\end{align}
For convergence of our FVM and well-posedness of the residual error estimator, 
we additionally require that $\A(x)$ is piecewise Lipschitz continuous, i.e., 
\begin{align}\label{eq3:A}
 \A\in W^{1,\infty}(T)^{d\times d}
 \quad\text{for all}\quad T\in\TT_0, 
\end{align} 
where $\TT_0$ is some given initial triangulation of $\Omega$; see Section~\ref{section:estimator} below.

The weak formulation of the model problem~\eqref{eq:model} reads: Find $u\in H^1_0(\Omega)$ such that 
\begin{align}
\label{eq:weakform}
 \AA(u,v):= \product{\A\nabla u}{\nabla v}_{\Omega} = \product{f}{v}_{\Omega}
 \quad\text{for all } v\in H^1_0(\Omega),
\end{align}
where $\product{\phi}{\psi}_\Omega = \int_\Omega \phi(x)\psi(x)\,dx$ denotes the $L^2$-scalar product.
According to our assumptions~\eqref{eq:A} on $\A$, the bilinear form $\AA(\cdot,\cdot)$ is continuous and elliptic on $H^1_0(\Omega)$. Therefore, 
existence and uniqueness of the solution $u\in H^1_0(\Omega)$ of~\eqref{eq:weakform} follow from the Lax-Milgram theorem. Moreover, $\enorm{v}^2:=\AA(v,v)$ defines the so-called energy norm which is an equivalent norm on $H^1_0(\Omega)$. We shall use the notation $\enorm{v}_\omega^2:=\int_\omega\A\nabla v\cdot\nabla v$ for the energy norm on subdomains $\omega\subseteq\Omega$, i.e., $\enorm{v}=\enorm{v}_\Omega$. According to~\eqref{eq:A}, it holds $\enorm{v}_\omega \simeq \norm{\nabla v}{L^2(\omega)}$.

%===============================================================================
\subsection{Triangulation}
%===============================================================================
\label{section:triangulation}%
Throughout, $\TT_\star$ denotes a conforming triangulation of $\Omega$ into non-degenerated closed simplices $T\in\TT_\star$ (i.e., triangles for $d=2$, tetrahedra for $d=3$),
$\NN_\star$ is the corresponding set of nodes, and 
$\EE_\star$ is the corresponding set of facets (i.e., edges for $d=2$ and triangular faces for $d=3$). We suppose that $\TT_\star$ is $\sigma$-shape regular, i.e.,
\begin{align}\label{eq:shaperegular}
 \max_{T\in\TT_\star}\frac{\diam(T)^d}{|T|} \le \sigma < \infty.
\end{align}
Here, $\diam(T) := \max\set{|x-y|}{x,y\in T}$ denotes the Euclidean diameter and $|T|$ is the area of $T$.  Additionally, we assume that the triangulation $\TT_\star$ is aligned with the discontinuities of the coefficient matrix $\A$, i.e.,~\eqref{eq3:A} holds with $\TT_0$ replaced by $\TT_\star$. We note that this follows from~\eqref{eq3:A} and the mesh-refinement used; see Section~\ref{section:main}.
%\begin{align}\label{eq2:A}
% \A\in W^{1,\infty}(T)\quad\text{for all }T\in\TT_\star.
%\end{align} 
Associated with $\TT_\star$ is the local mesh-size function $h_\star\in L^\infty(\Omega)$ which is defined by $h_\star|_T:=h_T:=|T|^{1/d}$. Note that $\sigma$-shape regularity~\eqref{eq:shaperegular}
 yields $h_T \simeq \diam(T)$.

For the nodes $\NN_\star$, we introduce the partition $\NN_\star=\NN_\star^\Gamma\cup\NN_\star^\Omega$ into all boundary nodes $\NN_\star^\Gamma := \NN_\star\cap\Gamma$ and all interior nodes $\NN_\star^\Omega := \NN_\star \backslash \NN_\star^\Gamma$. 

For the facets $\EE_\star$, we introduce the partition $\EE_\star=\EE_\star^\Gamma\cup\EE_\star^\Omega$ into all boundary facets $\EE_\star^\Gamma:=\set{\facet\in\EE_\star}{\facet\subset\Gamma}$ and all interior facets $\EE_\star^\Omega := \EE_\star \backslash \EE_\star^\Gamma$. 
Finally, for an element $T\in\TT_\star$, we denote by $\EEt:=\set{\facet\in\EE_\star}{\facet\subset \partial T}\subseteq\EE_\star$ the set of all facets of $T$.

\begin{figure}[t]
  \centering
  \psfrag{v1}[c][c]{\small $V_{1}$}\psfrag{v2}[c][c]{\small $V_{2}$}\psfrag{v3}[c][c]{\small $V_{3}$}
  \psfrag{v4}[c][c]{\small $V_{4}$}\psfrag{v5}[c][c]{\small $V_{5}$}\psfrag{v6}[c][c]{\small $V_{6}$}
  \psfrag{v7}[c][c]{\small $V_{7}$}
  \psfrag{v}[c][c]{\small $V$}
  \subfigure[Dual mesh $\TT_\star^*$.]{\label{fig:primaldual1}
	\includegraphics[width=0.4\textwidth]{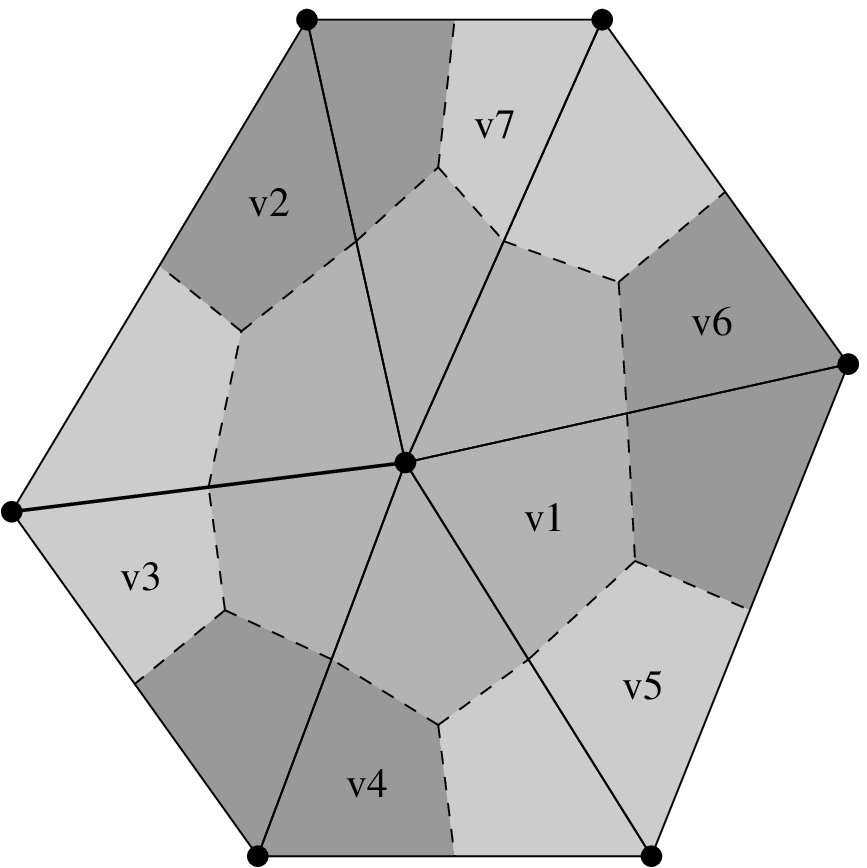}}
  \hspace*{0.1\textwidth}
  \psfrag{V}[c][c]{\small$V$}
  \subfigure[Edges of $\EE_{V,\star}$.]
  {\label{fig:edgesV}\includegraphics[width=0.4\textwidth]{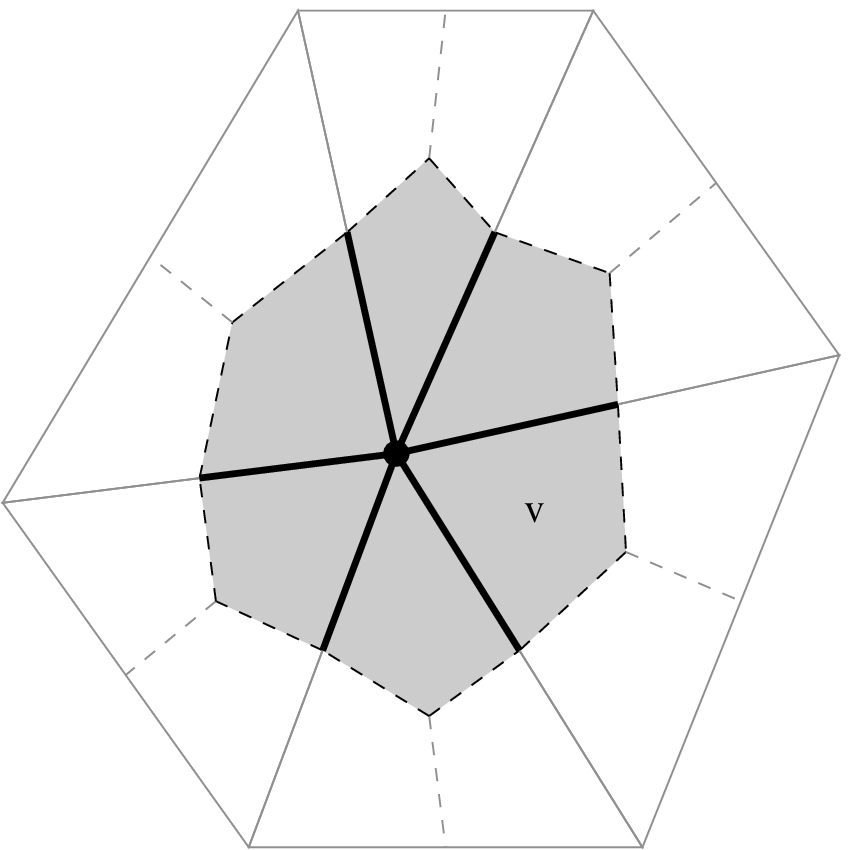}}
  \caption{Local construction of the dual mesh $\TT^*_\star$ from the 
  primal mesh $\TT_\star$ in 2D: 
  The dashed lines are the boundaries of the induced control volumes $V_i\in\TT^*_\star$, 
  which are associated with the nodes $a_i\in\NN_\star$ of $\TT_\star$ (left).
	For $V\in\TT_\star^*$, the set $\EE_{V,\star}$ consists of the bold lines which are parts of 
	edges in $\EE_\star$ (right).}
  \label{fig:primaldual}
\end{figure}
\begin{figure}[t]
  \centering
  \psfrag{T}[c][c]{\small $T$}
  \psfrag{cT}[c][c]{\small $c_T$}\psfrag{ai}[c][c]{\small $a_{i}$}
  \psfrag{Q1}[c][c]{\small $Q_{1}$}\psfrag{Q2}[c][c]{\small $Q_{2}$}
  \psfrag{Q3}[c][c]{\small $Q_{3}$}
  \psfrag{cE1}[c][c]{\small $c_{\facet_1}$}\psfrag{cE2}[c][c]{\small $c_{\facet_2}$}
  \psfrag{cE3}[c][c]{\small $c_{\facet_3}$}
  \psfrag{m1}[c][c]{\small $m_1$}\psfrag{m2}[c][c]{\small $m_2$}
  \psfrag{m3}[c][c]{\small $m_3$}
   \subfigure[Dual mesh $\TT^*_\star$.]{\label{fig:dual3dconst}
  \includegraphics[width=0.4\textwidth]{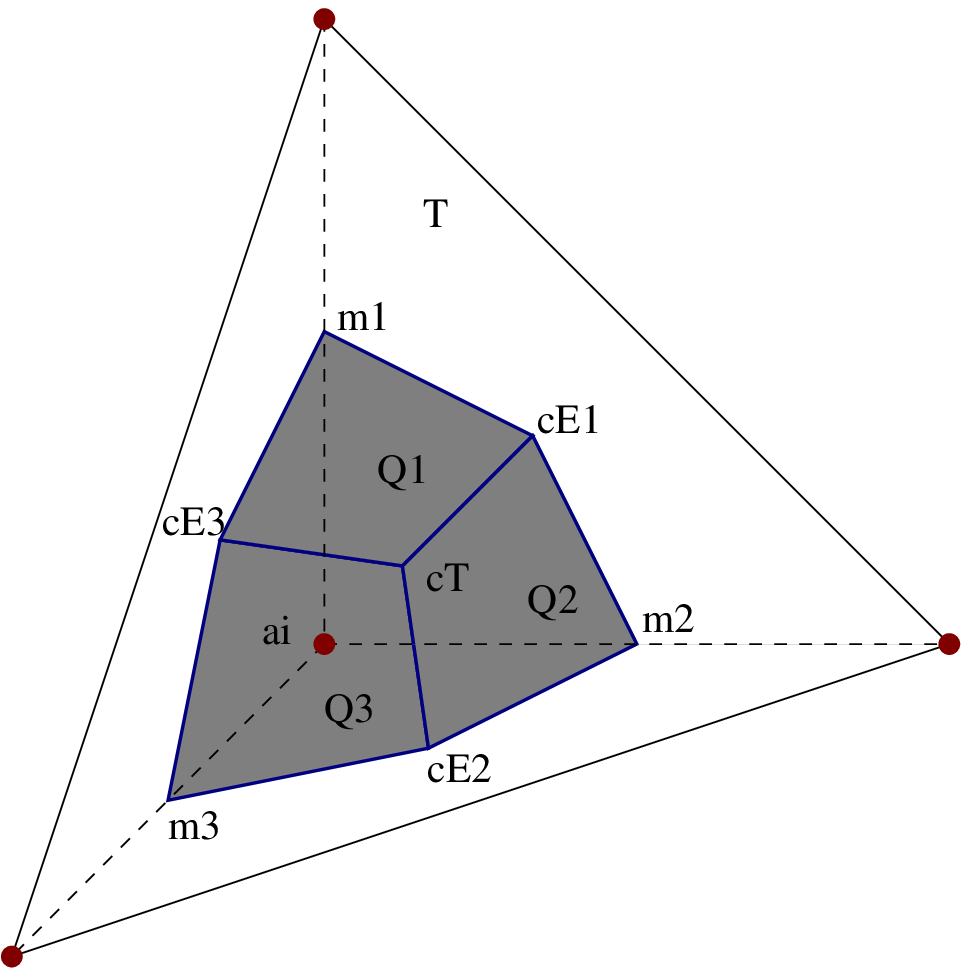}}
  \hspace*{0.1\textwidth}
  \psfrag{z1}[c][c]{\small $\zeta_1$}\psfrag{z2}[c][c]{\small $\zeta_2$}
  \psfrag{z3}[c][c]{\small $\zeta_3$}
   \subfigure[Some faces $\zeta_j$ of $\EE_{V_i,\star}$]{\label{fig:dual3dEEV}
  \includegraphics[width=0.4\textwidth]{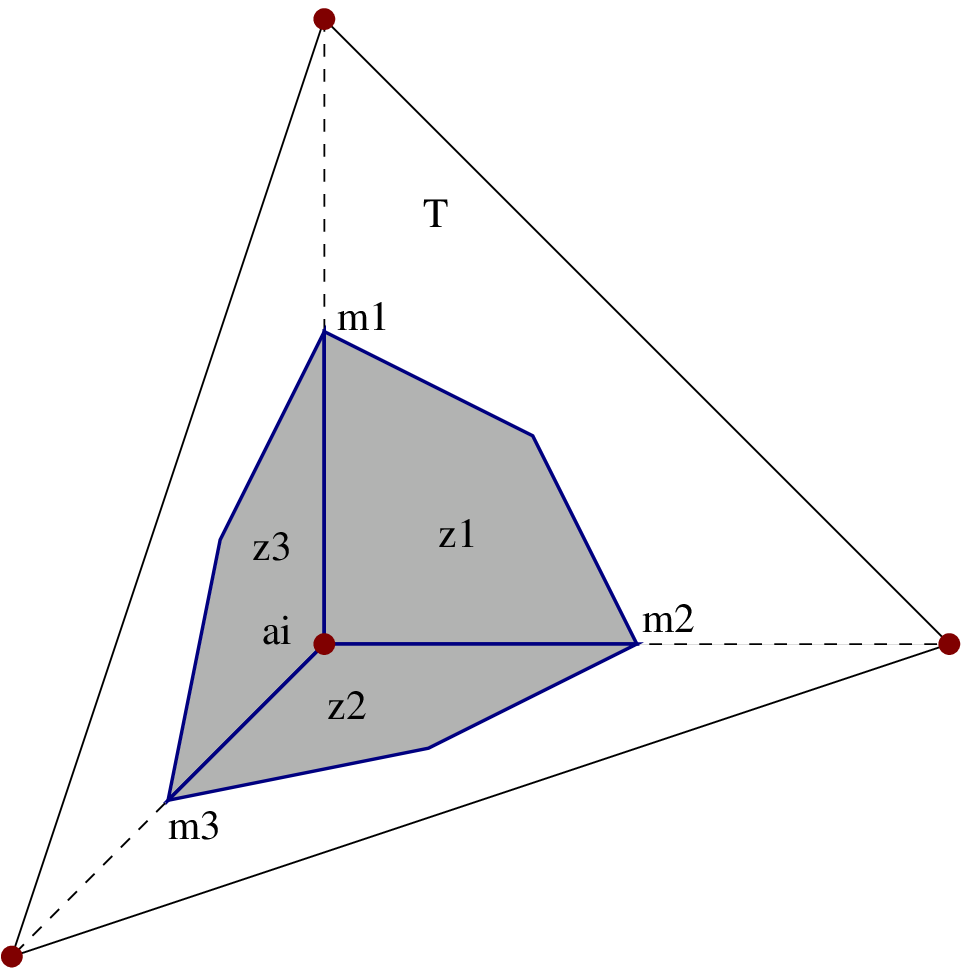}}
  \caption{Local construction of the dual mesh $\TT^*_\star$
  from the 
  primal mesh $\TT_\star$ in 3D:
  For a node $a_i\in\NN_\star$ of $T$, the center of gravity 
  $c_T$ of $T$ is connected  with the centers of gravity $c_{F_j}$ of the three adjacent faces $\facet\in\EEt$. Moreover, these centers are connected to the midpoints $m_k$ of the three edges which meet in $a_i$. Together with the edges
  from these midpoints to $a_i$, we get the cuboid $V_i\cap T\not=\emptyset$ (left).
  The three dark-gray faces $Q_1$, $Q_2$, and $Q_3$ are part of
  the boundary $\partial V_i$ of the box $V_i$ (left).
  The light-gray faces $\zeta_1$, $\zeta_2$, and $\zeta_3$ belong to the set $\EE_{V_i,\star}$ and are part of faces in $\EE_\star$ (right).
  }
  \label{fig:dual3d}
\end{figure}
%===============================================================================
\subsection{Dual mesh}
\label{subsec:dualmesh}
%===============================================================================
In contrast to standard FEM, our FVM discretization additionally needs the so-called 
dual mesh $\TT^*_\star$ 
which is built from the conforming triangulation $\TT_\star$.
In 2D, connecting the center of gravity of an element $T\in\TT_\star$
with the (edge) midpoint of $\facet\in\EEt$, we obtain $\TT^*_\star$ 
whose boxes (elements)
$V\in\TT^*_\star$ are non-degenerate closed polygons; 
see Figure~\ref{fig:primaldual}\subref{fig:primaldual1}.
In 3D, we connect the center of gravity of an element $T\in\TT_\star$
with the centers of gravity of the four faces $\facet\in\EEt$. Furthermore, each center of 
gravity of a face $\facet\in\EEt$ is connected by straight lines to the midpoints of the 
edges of the face $\facet$. Figure~\ref{fig:dual3d}\subref{fig:dual3dconst}
shows the contribution of some element $T\in\TT_\star$ with node $a_i$ to
the box $V_i\in\TT^*_\star$.

Note that there is a unique correspondence between the nodes $a_i\in\NN_\star$ of the primal mesh $\TT_\star$ and the boxes $V_i\in\TT^*_\star$ of the dual mesh, namely $V_i\cap\NN_\star = \{a_i\}$.
Furthermore, we define $\EE_{V,\star}:=\set{\facet\cap V}{\facet\in\EE_\star}$ for all $V\in\TT^*_\star$; see
Figure~\ref{fig:primaldual}\subref{fig:edgesV} for 2D.

For 3D, Figure~\ref{fig:dual3d}\subref{fig:dual3dEEV} shows three faces $\zeta_1$, $\zeta_2$,
and $\zeta_3$ of $\EE_{V_i,\star}$, $V_i\in\TT^*_\star$.
Note that 
\begin{align}
\label{eq:meshrelation}
\bigcup_{T\in\TT_\star}T = \overline\Omega = \bigcup_{V\in\TT_\star^*}V
\quad\text{and}\quad
\bigcup_{\facet\in\EE_\star}\facet=\bigcup_{V\in\TT^*_\star}\bigcup_{\facet\in\EE_{V,\star}}\facet.
\end{align}

%===============================================================================
\subsection{Vertex-centered finite volume method (FVM)}
%===============================================================================
\label{section:fvm}%
Given the conforming triangulation $\TT_\star$ and the corresponding dual mesh $\TT^*_\star$, we define the space of all $\TT_\star$-piecewise affine and globally continuous functions
\begin{align*}
\SS^1(\TT_\star):=\set{v\in\CC(\Omega)}{v|_T \text{ affine for all } T\in\TT_\star}\subset H^1(\Omega)
\end{align*}
as well as the space of all $\TT_\star^*$-piecewise constant functions
\begin{align*}
\PP^0(\TT^*_\star):=\set{v\in L^2(\Omega)}{v|_V \text{ constant for all } V\in\TT^*_\star}.
\end{align*}
For the FVM discretization, we consider the subspaces which respect the homogeneous Dirichlet conditions of~\eqref{eq:model}, i.e.,
\begin{align*}
 \SS^1_0(\TT_\star) := \set{v\in\SS^1(\TT_\star)}{v|_\Gamma=0}\subset H^1_0(\Omega)
 \text{ and }
 \PP^0_0(\TT_\star^*) := \set{v\in\PP^0(\TT_\star^*)}{v|_\Gamma=0}.
\end{align*}%
The formal idea of the FVM reads as follows:
If we integrate the strong form~\eqref{eq:model} over each dual element $V\in\TT^*_\star$ and
apply the divergence theorem, we get a balance equation for the model problem. The FVM approximates $u\in H^1_0(\Omega)$ by some conforming approximation $u_\star\in\SS^1_0(\TT_\star)$ of the balance equation. 
With the aid of test functions in $\PP^0_0(\TT_\star^*)$, we formalize this with the bilinear form
\begin{align}
  \label{eq:fvembilinear}
  \begin{split}
  \AA_\star(v_\star,v_\star^*):=-\sum_{a_i\in\NN_\star^\Omega}v_\star^*|_{V_i}
  \int_{\partial V_i}\A \nabla v_\star\cdot \nf\,ds
  \quad\text{for all }v_\star\in\SS^1_0(\TT_\star)
  \text{ and }v_\star^*\in\PP^0_0(\TT_\star^*).
  \end{split}
\end{align}
The right-hand side reads
\begin{align*}
  \sum_{a_i\in\NN_\star^\Omega}v_\star^*|_{V_i}
  \int_{V_i} f\,dx
  = \product{f}{v_\star^*}_\Omega
  \quad\text{for all }v_\star^*\in\PP^0_0(\TT_\star^*).
\end{align*}
Throughout, if $\nf$ appears in a boundary integral, it denotes the unit normal vector to the boundary pointing outward the respective domain.
Now, the FVM discretization reads: Find $u_\star\in\SS^1_0(\TT_\star)$ such that
\begin{align}
\label{eq:disc_systemfvem}
  \AA_\star(u_\star,v_\star^*)&= \product{f}{v_\star^*}_\Omega\quad \text{for all } v_\star^*\in\PP^0_0(\TT_\star^*).
\end{align}
It is well-known that there exists a constant $H>0$ such that~\eqref{eq:disc_systemfvem} admits a unique solution $u_\star\in\SS^1_0(\TT_\star)$ provided that $\TT_\star$ is sufficiently fine, i.e., $\norm{h_\star}{L^\infty(\Omega)}\le H$;
see Lemma~\ref{lem:bilinearcomp} below.
%
%, e.g.,~\cite{Cai:1991-1, Ewing:2002-1,Chatzipantelidis:2002-1}
%for Lipschitz continuous $\A\in W^{1,\infty}(\Omega)^{d\times d}$. Note that the proofs can easily be adapted if $\A$ is only piecewise Lipschitz continuous, i.e.,~\eqref{eq3:A} holds with $\TT_\star=\TT_0$.
The convergence of the FVM is usually proved under certain regularity assumptions, e.g., $u\in H^1_0(\Omega)\cap H^{1+\eps}(\Omega)$ for some $\eps>0$; see, e.g.,~\cite[Theorem~3.3.]{Ewing:2002-1}. As a side result of our analysis, Theorem~\ref{corollary:cea} below proves convergence of the total error (i.e., energy error plus data oscillations) without any regularity assumptions.

%===============================================================================
\subsection{Weighted-residual error estimator}
%===============================================================================
\label{section:estimator}%
With $\div_\star$ denoting the $\TT_\star$-piecewise divergence operator,
we define the volume residual by
\begin{align}
  \label{eq:residuum}
  R_\star(v_\star)|_T=
	(f+\div_\star\A \nabla v_\star)|_T \quad\text{for all } T\in\TT_\star
	\text{ and all }v_\star\in\SS^1_0(\TT_\star).
\end{align}
Throughout, we abbreviate $\div_\star\A \nabla v_\star:=\div_\star(\A \nabla v_\star)$ to ease the readability.
Let $\jump\cdot$ denote the normal jump across an interior facet $\facet=T\cap T'\in\EE_\star^\Omega$, i.e., $\jump{\boldsymbol{g}}|_{F} =\boldsymbol{g}|_T\cdot\nf_T+  \boldsymbol{g}|_{T'}\cdot\nf_{T'}$, where, e.g., $\boldsymbol{g}|_T$ denotes the trace of $\boldsymbol{g}$ from $T$ onto $\facet$ and $\nf_T$ is the outer normal of $T$ on $\facet$.
Then, we define the facet residual or normal jump by
\begin{align}
  \label{eq:edgeJ}
  J_\star(v_\star)|_{F}=\jump{\A\nabla v_\star}|_{F} 
  \quad \text{ for all }\facet\in\EE_\star^\Omega
  	\text{ and all }v_\star\in\SS^1_0(\TT_\star).
\end{align}
For all $v_\star\in\SS^1_0(\TT_\star)$, we define the weighted-residual error estimator as for the FEM
\begin{align}\label{eq:eta}
 \eta_\star(v_\star)^2 = \eta_\star(\TT_\star,v_\star)^2
 \quad\text{with}\quad
 \eta_\star(\UU_\star,v_\star)^2 = \sum_{T\in\UU_\star}\eta_\star(T,v_\star)^2
 \quad\text{for all }\UU_\star\subseteq\TT_\star,
\end{align}
where 
\begin{align}\label{eq:etaT}
 \eta_\star(T,v_\star)^2 = h_T^2\,\norm{f+\div_\star\A\nabla v_\star}{L^2(T)}^2
 + h_T\,\norm{\jump{\A\nabla v_\star}}{L^2(\partial T\backslash\Gamma)}^2;
\end{align}
cf., e.g.,~\cite{ao00,verfuerth}. For $v_\star=u_\star$ being the discrete FVM solution, we abbreviate the notation and omit this argument, e.g., $\eta_\star := \eta_\star(u_\star)$ and $\eta_\star(T) := \eta(T,u_\star)$.

Let $\Pi_\star$ denote the elementwise or facetwise integral mean operator, i.e.,
\begin{align}
 (\Pi_\star v)|_\tau = \frac{1}{|\tau|}\int_\tau v\,dx
 \quad\text{for all }\tau\in\TT_\star\cup\EE_\star
 \text{ and all }v\in L^2(\tau).
\end{align}
Recall that $\Pi_\star$ is the elementwise $L^2$-orthogonal projection onto the constants, i.e.,
\begin{align}\label{eq:L2proj}
 \norm{v-\Pi_\star v}{L^2(\tau)} = \min_{c\in\R}\norm{v-c}{L^2(\tau)}
 \le \norm{v}{L^2(\tau)}
 \quad\text{for all }\tau\in\TT_\star\cup\EE_\star
 \text{ and all }v\in L^2(\tau).
\end{align}
With $\Pi_\star$, we define the data oscillations
\begin{align*}
 \osc_\star(v_\star)^2 = \osc_\star(\TT_\star,v_\star)^2
 \text{ with }
 \osc_\star(\UU_\star,v_\star)^2 = \sum_{T\in\UU_\star}\osc_\star(T,v_\star)^2
 \text{ for all }\UU_\star\subseteq\TT_\star,
\end{align*}
where
\begin{align}\label{eq:oscT}
 \osc_\star(T,v_\star)^2 = h_T^2\,\norm{(1-\Pi_\star)(f+\div_\star\A\nabla v_\star)}{L^2(T)}^2
 + h_T\,\norm{(1-\Pi_\star)\jump{\A\nabla v_\star}}{L^2(\partial T\backslash\Gamma)}^2.
\end{align}
Again, we abbreviate the notation for $v_\star = u_\star$ being the FVM solution, e.g., $\osc_\star := \osc_\star(u_\star)$ and $\osc_\star(T) := \osc_\star(T,u_\star)$.
Moreover, we stress the elementwise estimate 
\begin{align}\label{eq:osc-eta}
 \osc_\star(T,v_\star)\le\eta_\star(T,v_\star)
 \quad\text{for all }T\in\TT_\star
 \text{ and all }v_\star\in\SS^1_0(\TT_\star)
\end{align}%
which immediately follows from~\eqref{eq:L2proj}.
The following result is proved in~\cite[Theorem~2.4]{xzz06} and~\cite[Theorem~2.6]{xzz06}; see also~\cite[Theorem~3.1]{Carstensen:2005-1} and \cite[Theorem~3.3]{Carstensen:2005-1}.

\begin{proposition}[reliability and efficiency]
The estimator $\eta_\star$ satisfies reliability
\begin{align}\label{eq:reliable}
 \enorm{u-u_\star}^2 \le \c{rel}\,\eta_\star^2
\end{align}
as well as efficiency
\begin{align}\label{eq:efficient}
 \c{eff}^{-1}\,\eta_\star^2 \le \enorm{u-u_\star}^2 + \osc_\star^2.
\end{align}
The constants $\setc{rel},\setc{eff}>0$ depend only on $\sigma$-shape regularity  of $\TT_\star$ and on the assumptions~\eqref{eq:A}--\eqref{eq3:A} on $\A$.
\qed
\end{proposition}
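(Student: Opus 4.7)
I will prove reliability and efficiency separately. Reliability rests on a clever error identity that bypasses the missing Galerkin orthogonality by pairing residuals against $v-I_\star^*v$, where $I_\star\colon H^1_0(\Omega)\to\SS^1_0(\TT_\star)$ is a Cl\'ement/Scott--Zhang quasi-interpolation and $I_\star^*v\in\PP^0_0(\TT_\star^*)$ is its canonical lift to the dual mesh defined by $I_\star^*v|_{V_i}:=(I_\star v)(a_i)$. Efficiency is the classical Verf\"urth bubble-function argument and ignores the FVM nature of $u_\star$ entirely.

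\textbf{Reliability.} The target identity, valid for every $v\in H^1_0(\Omega)$, is
\begin{align*}
\AA(u-u_\star,v)&=\sum_{T\in\TT_\star}\product{R_\star(u_\star)}{v-I_\star^*v}_T\\
&\quad-\sum_{\facet\in\EE_\star^\Omega}\product{J_\star(u_\star)}{v-I_\star^*v}_\facet.
\end{align*}
I would derive it from the splitting $\AA(u-u_\star,v)=\AA(u-u_\star,v-I_\star v)+\AA(u-u_\star,I_\star v)$. The first summand is converted by elementwise integration by parts (legitimate by~\eqref{eq3:A}) into the standard residual sum against $v-I_\star v$. For the second summand I would combine $\AA(u,I_\star v)=\product{f}{I_\star v}_\Omega$ with the FVM identity $\AA_\star(u_\star,I_\star^*v)=\product{f}{I_\star^*v}_\Omega$ to write
\begin{align*}
\AA(u-u_\star,I_\star v)=\product{f}{I_\star v-I_\star^*v}_\Omega+\bigl[\AA_\star(u_\star,I_\star^*v)-\AA(u_\star,I_\star v)\bigr],
\end{align*}
and then expand $\AA_\star(u_\star,I_\star^*v)$ via the divergence theorem applied on each $V_i\cap T$, as in the derivation of $\AA_\star$ in Section~\ref{section:fvm}; after cancellations the bracketed bilinear-form mismatch turns out to be itself a residual sum, so that $\AA(u-u_\star,I_\star v)$ equals a residual pairing against $I_\star v-I_\star^*v$. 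Telescoping via $(v-I_\star v)+(I_\star v-I_\star^*v)=v-I_\star^*v$ produces the displayed identity. The standard Cl\'ement-type bounds $\norm{v-I_\star^*v}{L^2(T)}\lesssim h_T\enorm{v}_{\omega_T}$ and $\norm{v-I_\star^*v}{L^2(\facet)}\lesssim h_\facet^{1/2}\enorm{v}_{\omega_\facet}$ (easily verified by inserting $I_\star v$ and using an inverse estimate on $\SS^1_0(\TT_\star)$), combined with Cauchy--Schwarz and a finite-overlap argument, then bound the right-hand side by $C\eta_\star\enorm{v}$; choosing $v=u-u_\star$ delivers~\eqref{eq:reliable}.

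\textbf{Efficiency.} Fix $T\in\TT_\star$ and set $R_T:=\Pi_\star(f+\div_\star\A\nabla u_\star)|_T$. Testing the continuous weak formulation~\eqref{eq:weakform} against the extended-by-zero interior bubble $w_T:=b_TR_T\in H^1_0(T)\subset H^1_0(\Omega)$ and integrating by parts on $T$ yields
\begin{align*}
\product{R_T}{w_T}_T=\AA(u-u_\star,w_T)+\product{(1-\Pi_\star)(f+\div_\star\A\nabla u_\star)}{w_T}_T.
\end{align*}
The bubble norm equivalence $\norm{R_T}{L^2(T)}^2\lesssim\product{R_T}{w_T}_T$, together with $\norm{w_T}{L^2(T)}\lesssim\norm{R_T}{L^2(T)}$ and the inverse estimate $\enorm{w_T}\lesssim h_T^{-1}\norm{R_T}{L^2(T)}$, gives $h_T\norm{R_T}{L^2(T)}\lesssim\enorm{u-u_\star}_T+\osc_\star(T)$; a triangle inequality then promotes this to the bound on $h_T\norm{f+\div_\star\A\nabla u_\star}{L^2(T)}$ modulo another $\osc_\star(T)$. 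The jump term is treated analogously using a facet bubble on the two-element patch $\omega_\facet$ with test function $b_\facet\,\mathcal E(\Pi_\star\jump{\A\nabla u_\star})$ for a suitable polynomial extension $\mathcal E$; the volume residual it spawns on $\omega_\facet$ is reabsorbed into the already-estimated volume contributions of the two adjacent elements.

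\textbf{Main obstacle.} The genuinely new ingredient compared to the FEM case is the geometric expansion of $\AA_\star(u_\star,I_\star^*v)$ via the divergence theorem on the dual cells $V_i$, which eventually reveals that the bilinear-form mismatch $\AA_\star(u_\star,I_\star^*v)-\AA(u_\star,I_\star v)$ is itself a residual pairing and therefore telescopes with the standard residual part. Getting the signs and the combinatorics of the dual-mesh decomposition right is the main mechanical challenge; once the error identity is in place, the usual Cl\'ement and bubble-function machinery takes over and produces the constants claimed (depending only on $\sigma$-shape regularity and~\eqref{eq:A}--\eqref{eq3:A}).
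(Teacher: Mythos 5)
Your proposal is correct and follows essentially the same route as the sources the paper cites for this proposition (\cite[Theorems~2.4 and 2.6]{xzz06}, \cite[Theorems~3.1 and 3.3]{Carstensen:2005-1}): the error identity pairing the residuals against $v-\II_\star^*(I_\star v)$, Cl\'ement-plus-dual-lift interpolation bounds, and the standard bubble-function argument for efficiency, which indeed uses nothing about the FVM origin of $u_\star$. The only remark worth making is that the dual-cell divergence-theorem bookkeeping you flag as the main obstacle can be packaged once and for all as the orthogonality relation~\eqref{eq1:lem:discdefect} (recorded later in the paper as Lemma~\ref{lem:discdefect}), which subtracted from the standard residual representation of $\AA(u-u_\star,v)$ yields your identity directly, without splitting off $I_\star v$ and tracking the bilinear-form mismatch separately.
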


The first contribution of the present work is the following C\'ea-type quasi-optimality of FVM with respect to the total error (i.e., sum of energy error plus data oscillations). 
In particular, this implies 
that the total errors of FVM and FEM are equivalent, see~\eqref{eq:intro4}. The proof of the theorem is given in Section~\ref{section:cea}.

\begin{theorem}\label{theorem:cea}
There exists $H>0$ such that the following statement is valid provided that $\TT_\star$ is sufficiently fine, i.e., $\norm{h_\star}{L^\infty(\Omega)}\le H$: There is a constant $\Ctot>0$ such that
\begin{align}\label{eq:totalerror}
 \Ctot^{-1}\,\eta_\star 
 \le \min_{v_\star\in\SS^1_0(\TT_\star)} \big(\enorm{u-v_\star} + \osc_\star(v_\star)\big) 
 \le \enorm{u-u_\star} + \osc_\star
 \le \Ctot\,\eta_\star.
\end{align}
Moreover, if $u_\star^{\rm FEM}\in\SS^1_0(\TT_\star)$ denotes the FEM solution of
\begin{align}\label{eq:fem}
 \AA(u_\star^{\rm FEM},v_\star) = (f,v_\star)_\Omega
 \quad\text{for all }v_\star\in\SS^1_0(\TT_\star),
\end{align}
it holds
\begin{align}\label{eq:equivalence}
 \Ctot^{-1}\,\big(\enorm{u-u_\star} + \osc_\star\big)
 \le \enorm{u-u_\star^{\rm FEM}} + \osc_\star(u_\star^{\rm FEM})
 \le  \Ctot\,\big(\enorm{u-u_\star} + \osc_\star\big).
\end{align}
The constant $\Ctot>0$ depends only on $\Omega$, $H$, the $\sigma$-shape regularity of $\TT_\star$, and on the  assumptions~\eqref{eq:A}--\eqref{eq3:A} on $\A$.
\end{theorem}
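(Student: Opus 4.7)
The chain~\eqref{eq:totalerror} naturally splits into three independent inequalities; I treat each in turn and then obtain~\eqref{eq:equivalence} as a byproduct. The rightmost inequality $\enorm{u-u_\star}+\osc_\star \le \Ctot\,\eta_\star$ comes for free from reliability~\eqref{eq:reliable}, which delivers $\enorm{u-u_\star} \le \c{rel}^{1/2}\,\eta_\star$, combined with $\osc_\star\le\eta_\star$, itself an immediate consequence of the elementwise estimate~\eqref{eq:osc-eta}. The middle inequality $\min_{v_\star}(\enorm{u-v_\star}+\osc_\star(v_\star)) \le \enorm{u-u_\star}+\osc_\star$ is trivially obtained by choosing $v_\star=u_\star\in\SS^1_0(\TT_\star)$. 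The meat of the theorem is therefore the leftmost inequality
\[
\Ctot^{-1}\,\eta_\star \le \min_{v_\star\in\SS^1_0(\TT_\star)}\big(\enorm{u-v_\star}+\osc_\star(v_\star)\big),
\]
for which efficiency~\eqref{eq:efficient} reduces matters to the C\'ea-type quasi-optimality
\begin{equation}\label{eq:plan-qopt}
\enorm{u-u_\star}+\osc_\star \le C\,\big(\enorm{u-v_\star}+\osc_\star(v_\star)\big)
\quad\text{for every } v_\star\in\SS^1_0(\TT_\star).
\end{equation}

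The plan for~\eqref{eq:plan-qopt} is built around the nodal transfer operator $J_\star\colon\SS^1_0(\TT_\star)\to\PP^0_0(\TT^*_\star)$ mapping $w_\star$ to the dual-piecewise-constant function sharing its nodal values. Four properties of $J_\star$ drive the proof: the geometric identity $\int_T(J_\star w_\star-w_\star)\,dx=0$ for every $T\in\TT_\star$ (which follows from the barycentric construction of $\TT^*_\star$ giving equal weight $|T|/(d+1)$ to each vertex of $T$, so that $\int_T J_\star w_\star$ coincides with $\int_T w_\star$ by affinity of $w_\star$); the first-order approximation bound $\|J_\star w_\star-w_\star\|_{L^2(T)}\le C\,h_T\,\|\nabla w_\star\|_{L^2(T)}$; the bilinear-form perturbation $|\AA(v_\star,w_\star)-\AA_\star(v_\star,J_\star w_\star)|\le C\,h\,\enorm{v_\star}\enorm{w_\star}$, which vanishes when $\A$ is $\TT_\star$-piecewise constant and is controlled by the Lipschitz data~\eqref{eq3:A} in general; and the coercivity $\AA_\star(w_\star,J_\star w_\star)\ge\alpha\,\enorm{w_\star}^2$ for $\|h_\star\|_{L^\infty(\Omega)}\le H$, furnished by Lemma~\ref{lem:bilinearcomp}.

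With these in hand, given $v_\star\in\SS^1_0(\TT_\star)$ I set $z_\star:=u_\star-v_\star$ and, using the FVM equation and inserting $\AA(v_\star,z_\star)$ together with $(f,z_\star)=\AA(u,z_\star)$, unfold
\[
\alpha\,\enorm{z_\star}^2 \le \AA_\star(z_\star,J_\star z_\star)
= \AA(u-v_\star,z_\star) + \big[\AA(v_\star,z_\star)-\AA_\star(v_\star,J_\star z_\star)\big] + (f,J_\star z_\star-z_\star).
\]
The first two summands are controlled by $\enorm{u-v_\star}\enorm{z_\star}$ and $C\,h\,\enorm{v_\star}\enorm{z_\star}$ respectively. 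For the third, the mean-zero identity lets me freely subtract $\Pi_\star(f+\div_\star\A\nabla v_\star)$ on each element, producing a main contribution bounded by $\osc_\star(v_\star)\enorm{z_\star}$ together with a residual governed by the $\TT_\star$-piecewise quantity $\div_\star\A\nabla v_\star=(\div\A)\cdot\nabla v_\star$. An absorption argument, admissible for $H$ small enough, yields $\enorm{u_\star-v_\star}\le C(\enorm{u-v_\star}+\osc_\star(v_\star))$; a triangle inequality transfers this to $\enorm{u-u_\star}$, and the oscillation analogue $\osc_\star\le C(\osc_\star(v_\star)+\enorm{u_\star-v_\star})$, obtained by an elementwise comparison through trace and inverse-type estimates on $\A\nabla(u_\star-v_\star)$, supplies the $\osc$-part of~\eqref{eq:plan-qopt}.

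The FEM--FVM equivalence~\eqref{eq:equivalence} follows by applying~\eqref{eq:plan-qopt} with $v_\star = u_\star^{\rm FEM}$ and estimating $\osc_\star(u_\star^{\rm FEM})$ via the same elementwise trick, then reversing roles using the standard C\'ea lemma $\enorm{u-u_\star^{\rm FEM}}\le\enorm{u-v_\star}$ and a parallel total-error quasi-optimality for FEM, essentially~\cite[Lemma~5.1]{ffp14}. The main obstacle I foresee is the passage from the bilinear-form defect $\AA(\cdot,\cdot)-\AA_\star(\cdot,J_\star\cdot)$ to an absorbable $h$-term: this rests on the quantitative $O(h)$ consistency (a feature of the Lipschitz hypothesis~\eqref{eq3:A} and the fact that the defect vanishes identically for piecewise-constant $\A$) and, crucially, on the combinatorial identity $\int_T(J_\star w_\star-w_\star)\,dx=0$, which alone upgrades an apparent $\|f\|$-sized error from the data term into a genuine oscillation that can be absorbed into $\osc_\star(v_\star)$.
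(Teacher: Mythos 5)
Your architecture matches the paper's: reliability~\eqref{eq:reliable}, efficiency~\eqref{eq:efficient}, and $\osc_\star\le\eta_\star$ reduce everything to the C\'ea-type bound $\enorm{u_\star-v_\star}\lesssim\enorm{u-v_\star}+\osc_\star(v_\star)$, which you attack---as the paper does---through the transfer operator onto $\PP^0_0(\TT_\star^*)$ (the paper's $\II_\star^*$), its mean-zero and first-order approximation properties, the coercivity $\AA_\star(w_\star,\II_\star^*w_\star)\gtrsim\enorm{w_\star}^2$ for fine meshes, and the same error-expansion identity for $\AA_\star(z_\star,\II_\star^*z_\star)$. The oscillation transfer $\osc_\star\lesssim\osc_\star(v_\star)+\enorm{u_\star-v_\star}$ is the paper's stability property (B1'), and the FEM comparison via \cite[Lemma~5.1]{ffp14} is also as in the paper.

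The gap is in your treatment of the consistency defect. You bound the second summand $\AA(v_\star,z_\star)-\AA_\star(v_\star,\II_\star^*z_\star)$ by $C\,h\,\enorm{v_\star}\,\enorm{z_\star}$ (the crude perturbation estimate~\eqref{eq:bilinearcomp}), and your manipulation of the third summand leaves a further residual $\sum_T\product{\div_\star\A\nabla v_\star}{\II_\star^*z_\star-z_\star}_T$ of the same size. A term $\norm{h_\star}{L^\infty(\Omega)}\enorm{v_\star}$ is \emph{not} dominated by $\enorm{u-v_\star}+\osc_\star(v_\star)$: writing $\enorm{v_\star}\le\enorm{u-v_\star}+\enorm{u}$ or $\le\enorm{z_\star}+\enorm{u_\star}$ always leaves a fixed quantity of order $h\enorm{u}$, and absorption can only remove multiples of $\enorm{z_\star}$. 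Your route therefore yields only $\enorm{u_\star-v_\star}\lesssim\enorm{u-v_\star}+\osc_\star(v_\star)+\norm{h_\star\nabla v_\star}{L^2(\Omega)}$, which is the weaker a~priori bound of Theorem~\ref{corollary:cea}, not the C\'ea estimate needed for~\eqref{eq:totalerror}. The paper avoids this by never invoking~\eqref{eq:bilinearcomp} at this point: it keeps the defect in its exact integrated-by-parts form (Step~1 of its proof), adds it to $(f,\II_\star^*z_\star-z_\star)_\Omega$, so that $f$ and $\div_\star\A\nabla v_\star$ recombine into the full volume residual while the elementwise normal traces recombine into the facet jumps $\jump{\A\nabla v_\star}$; then the element \emph{and facet} mean-zero properties~\eqref{eq1:proplindual} (your argument only establishes the element one) annihilate the respective projections $\Pi_\star$, and the whole combined expression is bounded by $\osc_\star(v_\star)\,\enorm{z_\star}$ with no $h\enorm{v_\star}$ remnant. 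This cancellation between the data term and the defect is precisely what your splitting forfeits.
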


For the sake of completeness and as an application of Theorem~\ref{theorem:cea}, we note 
the following {\sl a~priori} estimate for the total error. Note that~\eqref{eq:cor:cea}
does not require any additional regularity assumption on $u$.
The proof is given in Section~\ref{section:cea2}.

\begin{theorem}\label{corollary:cea}
There exists $H>0$ such that the following statement is valid provided that $\TT_\star$ is sufficiently fine, i.e., $\norm{h_\star}{L^\infty(\Omega)}\le H$: There is a constant $C>0$ such that
\begin{align}\label{eq:cor:cea}
 C^{-1} \big(\enorm{u-u_\star} + \osc_\star\big) 
 \le  \norm{h_\star(1\!-\!\Pi_\star)f}{L^2(\Omega)} + \!\!\!\min_{v_\star\in\SS^1_0(\TT_\star)} 
 \!\!\big(\enorm{u-v_\star} + \norm{h_\star\nabla v_\star}{L^2(\Omega)}\big).
\end{align}
In particular, this proves convergence 
\begin{align}\label{eq3:cor:cea}
 \enorm{u-u_\star} + \osc_\star \to0
 \text{ as }
 \norm{h_\star}{L^\infty(\Omega)}\to0.
\end{align}
Provided that $u\in H^1_0(\Omega)\cap H^2(\Omega)$, there even holds
\begin{align}\label{eq2:cor:cea}
 \enorm{u-u_\star} + \osc_\star
 = \OO(\norm{h_\star}{L^\infty(\Omega)}).
\end{align}
The constant $C>0$ depends only on $\Omega$, $H$, the $\sigma$-shape regularity of $\TT_\star$, and on the  assumptions~\eqref{eq:A}--\eqref{eq3:A} on $\A$, and~\eqref{eq3:cor:cea}--\eqref{eq2:cor:cea} require uniform $\sigma$-shape regularity of the considered family $\TT_\star$.
%
%\improve{DP: Abh\"angigkeit der Konstanten checken!}  
\end{theorem}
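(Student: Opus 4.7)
The strategy is to reduce~(\ref{eq:cor:cea}) to a generic bound on the oscillations by exploiting Theorem~\ref{theorem:cea}, which already provides
\[
 \enorm{u-u_\star}+\osc_\star \;\le\; \Ctot^2\,\min_{v_\star\in\SS^1_0(\TT_\star)}\bigl(\enorm{u-v_\star}+\osc_\star(v_\star)\bigr).
\]
Thus it suffices to bound $\osc_\star(v_\star)$ for an arbitrary $v_\star\in\SS^1_0(\TT_\star)$ by $\|h_\star(1-\Pi_\star)f\|_{L^2(\Omega)}+\|h_\star\nabla v_\star\|_{L^2(\Omega)}$; the first summand is independent of $v_\star$ and factors out of the minimum. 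With~(\ref{eq:cor:cea}) in place, the qualitative convergence~(\ref{eq3:cor:cea}) and the rate~(\ref{eq2:cor:cea}) then follow from inserting a suitable quasi-interpolant.

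First I would treat the volume contribution of $\osc_\star(T,v_\star)$. The triangle inequality splits $h_T\|(1-\Pi_\star)(f+\div_\star\A\nabla v_\star)\|_{L^2(T)}$ into $h_T\|(1-\Pi_\star)f\|_{L^2(T)}$ and $h_T\|(1-\Pi_\star)\div_\star\A\nabla v_\star\|_{L^2(T)}$. Because $v_\star|_T$ is affine and $\A|_T\in W^{1,\infty}(T)^{d\times d}$ by~(\ref{eq3:A}), the product rule reduces $\div(\A\nabla v_\star)|_T$ to a product of $\nabla\A$ and the constant vector $\nabla v_\star|_T$, so the second summand is controlled by $h_T\|\nabla v_\star\|_{L^2(T)}$ with a constant depending only on~(\ref{eq:A})--(\ref{eq3:A}).

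The main technical obstacle lies in the facet contribution $h_T^{1/2}\|(1-\Pi_\star)\jump{\A\nabla v_\star}\|_{L^2(F)}$ on an interior facet $F=T\cap T'$. Writing the jump pointwise as $\A(y)|_T\nabla v_\star|_T\cdot\nf_T+\A(y)|_{T'}\nabla v_\star|_{T'}\cdot\nf_{T'}$, the only source of variation along $F$ is the Lipschitz behaviour of $\A$, so comparison with its value at any fixed $y_0\in F$ yields $\|(1-\Pi_\star)\jump{\A\nabla v_\star}\|_{L^2(F)}\lesssim h_T|F|^{1/2}\bigl(|\nabla v_\star|_T|+|\nabla v_\star|_{T'}|\bigr)$. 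Using $|F|\simeq h_T^{d-1}$ and $h_T^{d/2}|\nabla v_\star|_T|\simeq\|\nabla v_\star\|_{L^2(T)}$ (both from $\sigma$-shape regularity), this reorganises to $h_T^{1/2}\|(1-\Pi_\star)\jump{\A\nabla v_\star}\|_{L^2(F)}\lesssim h_T\|\nabla v_\star\|_{L^2(T\cup T')}$. Summing over $T\in\TT_\star$ with bounded overlap of element patches yields $\osc_\star(v_\star)\lesssim\|h_\star(1-\Pi_\star)f\|_{L^2(\Omega)}+\|h_\star\nabla v_\star\|_{L^2(\Omega)}$, which combined with the quasi-optimality above proves~(\ref{eq:cor:cea}). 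This is the crux of the argument, using precisely the piecewise-Lipschitz assumption~(\ref{eq3:A}) on $\A$ to trade the non-projected jump for an $h_T$-weighted gradient.

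Finally, for~(\ref{eq3:cor:cea}) I would insert $v_\star=J_\star u$, a Scott--Zhang quasi-interpolant in $\SS^1_0(\TT_\star)$. Density of $H^2\cap H^1_0(\Omega)$ in $H^1_0(\Omega)$, together with $H^1$-stability of $J_\star$ and the standard first-order estimate on smooth functions, gives $\enorm{u-J_\star u}\to 0$ as $\|h_\star\|_{L^\infty(\Omega)}\to 0$; in addition, $\|h_\star\nabla J_\star u\|_{L^2(\Omega)}\le\|h_\star\|_{L^\infty(\Omega)}\|\nabla J_\star u\|_{L^2(\Omega)}\lesssim\|h_\star\|_{L^\infty(\Omega)}\|\nabla u\|_{L^2(\Omega)}$ and $\|h_\star(1-\Pi_\star)f\|_{L^2(\Omega)}\le\|h_\star\|_{L^\infty(\Omega)}\|f\|_{L^2(\Omega)}$ both vanish in the limit. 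For the $H^2$-rate~(\ref{eq2:cor:cea}), the density step is replaced by the standard Scott--Zhang estimate $\enorm{u-J_\star u}\lesssim\|h_\star\|_{L^\infty(\Omega)}\|u\|_{H^2(\Omega)}$, whereupon all three summands are of order $\|h_\star\|_{L^\infty(\Omega)}$.
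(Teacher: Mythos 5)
Your proposal is correct and follows essentially the same route as the paper: reduce via the quasi-optimality of Theorem~\ref{theorem:cea} to bounding $\osc_\star(v_\star)$ by $\norm{h_\star(1-\Pi_\star)f}{L^2(\Omega)}+\norm{h_\star\nabla v_\star}{L^2(\Omega)}$, which is exactly what the paper obtains by invoking Steps~1--2 of the proof of Lemma~\ref{lemma:osc}, and then conclude with a standard approximation argument. Your facet estimate compares $\A$ with its value at a fixed point of $F$ rather than with the elementwise integral mean $\overline\A$ as in the paper, but both rest identically on the piecewise-$W^{1,\infty}$ assumption~\eqref{eq3:A} and yield the same bound.
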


\begin{figure}[t]
 \centering
 \psfrag{T0}{}
 \psfrag{T1}{}
 \psfrag{T2}{}
 \psfrag{T3}{}
 \psfrag{T4}{}
 \psfrag{T12}{}
 \psfrag{T34}{}
 \includegraphics[width=35mm]{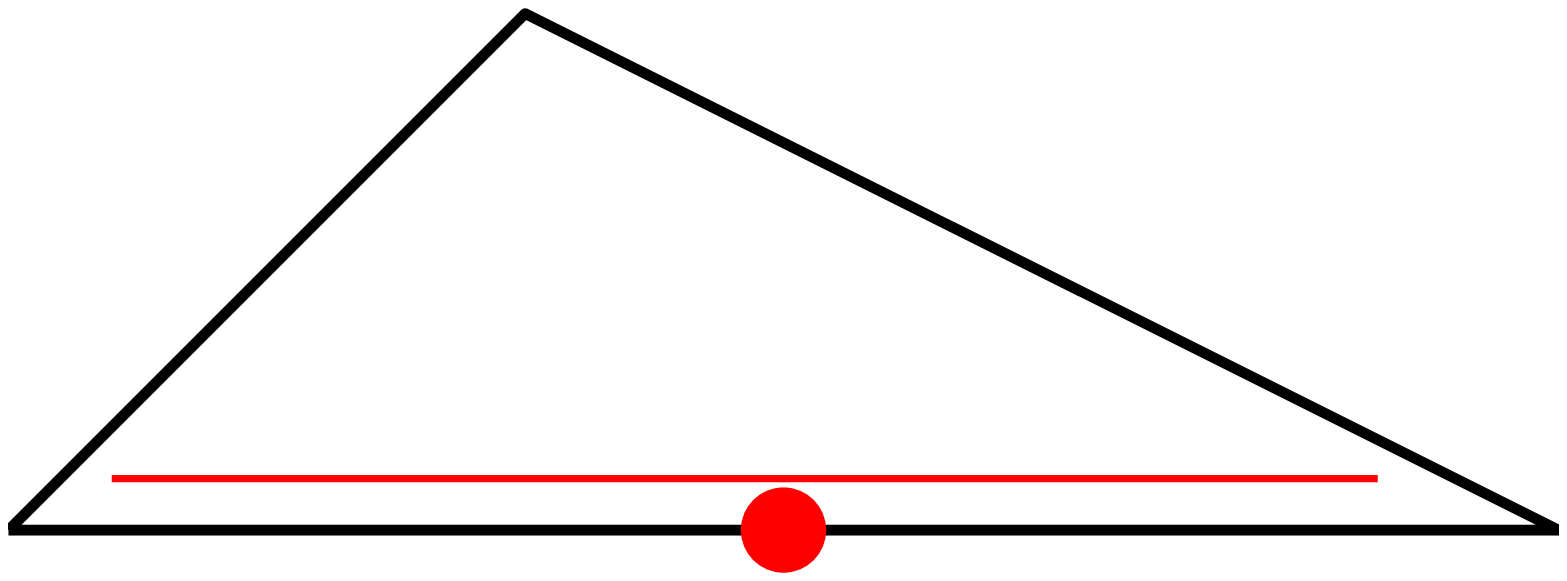} \quad
 \includegraphics[width=35mm]{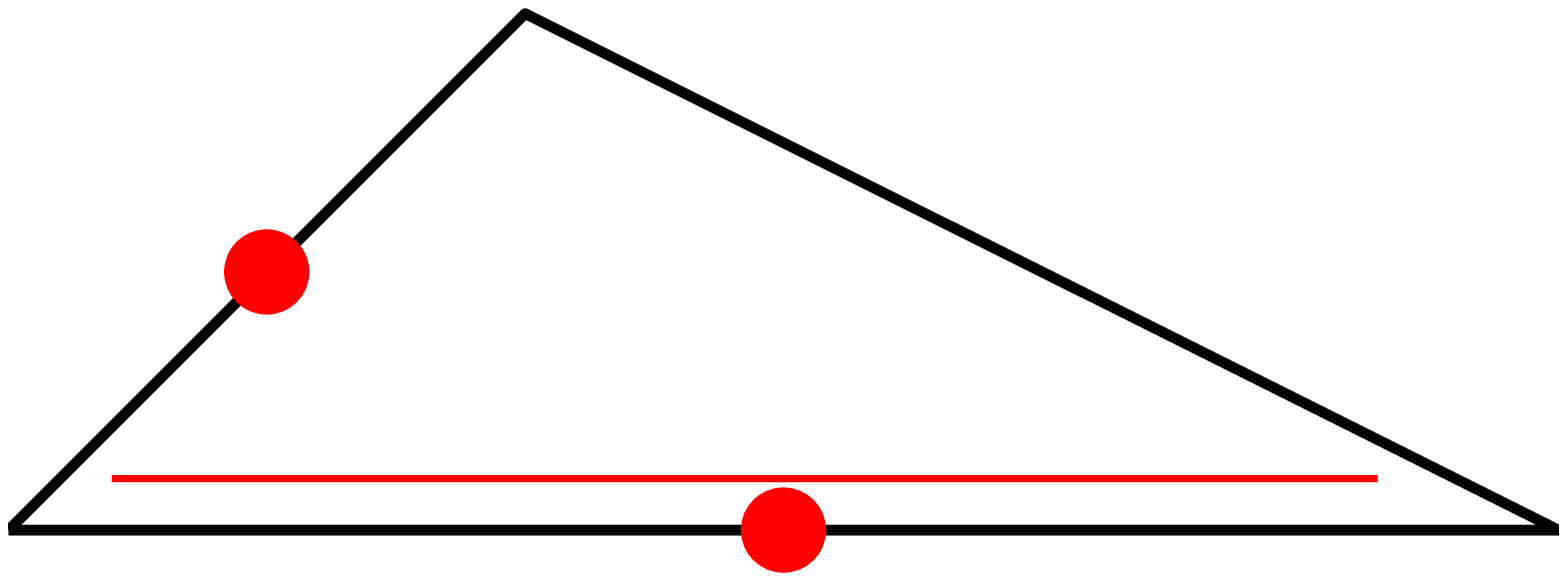} \quad
 \includegraphics[width=35mm]{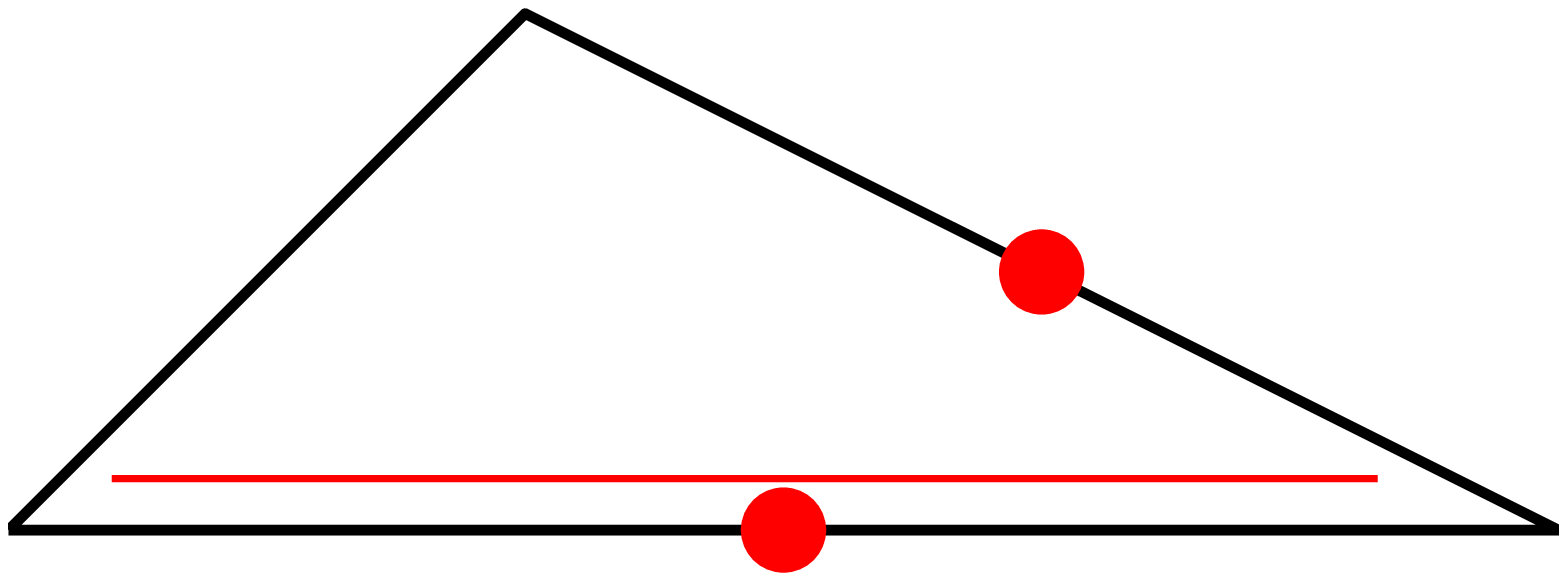} \quad
 \includegraphics[width=35mm]{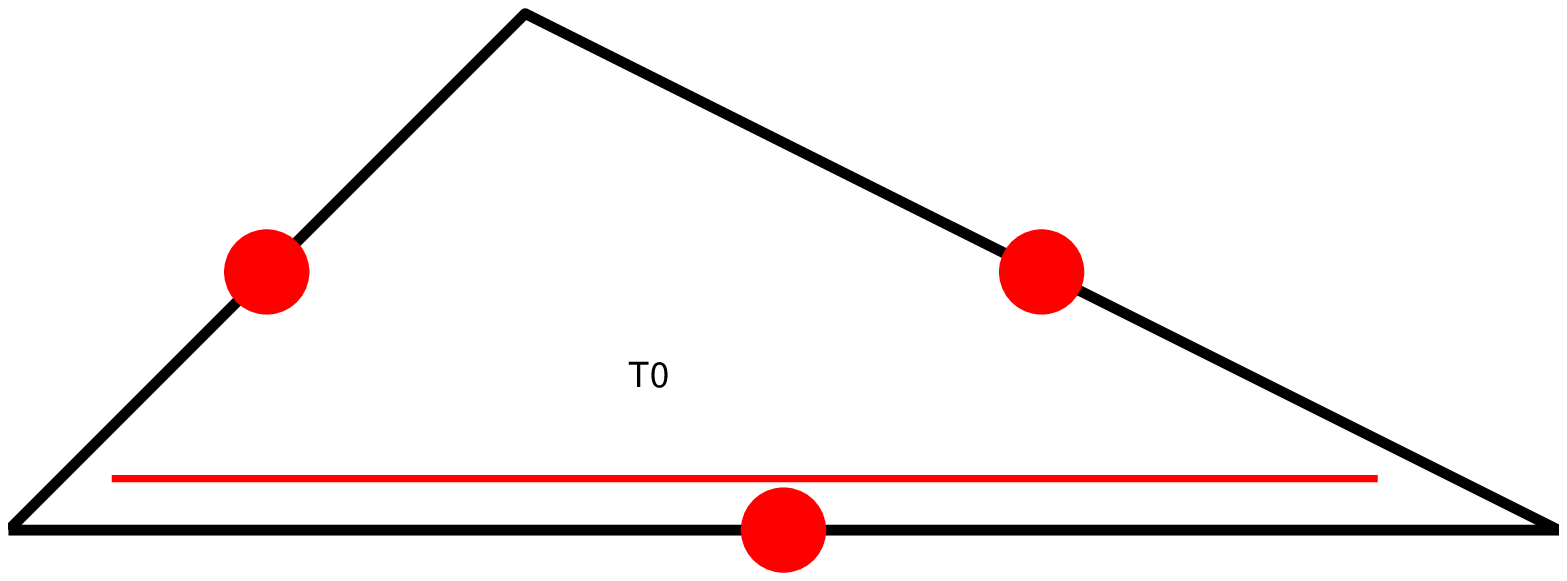} \\
 \includegraphics[width=35mm]{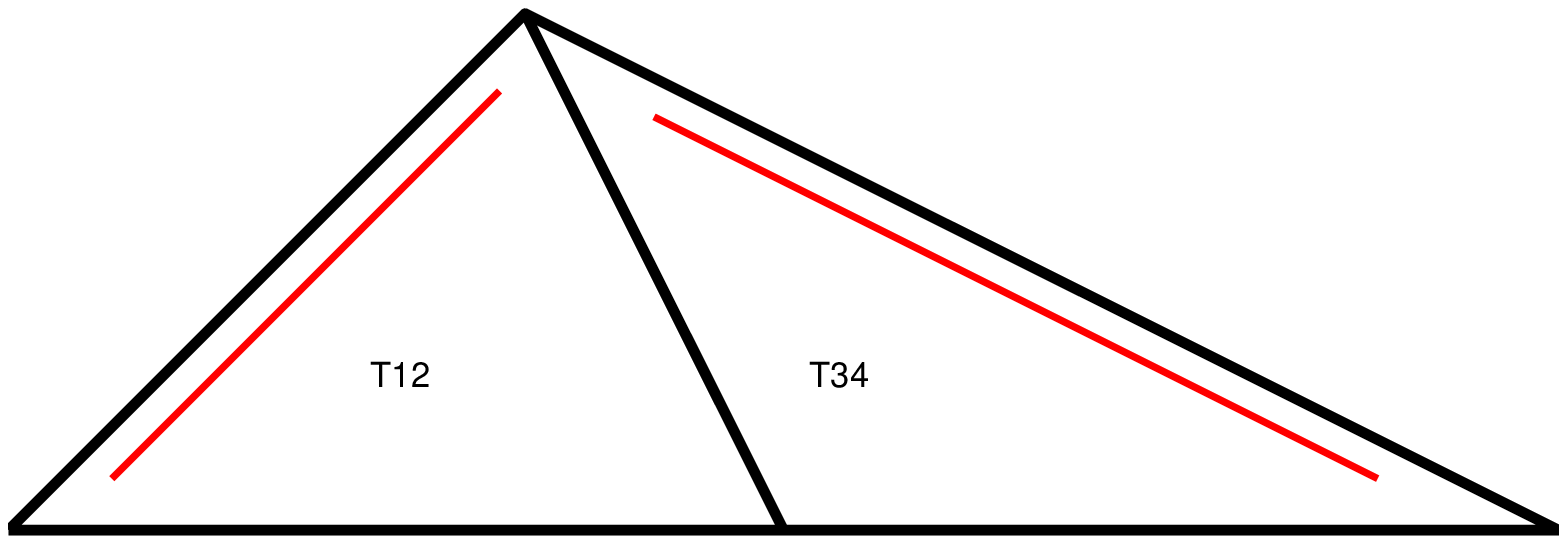} \quad
 \includegraphics[width=35mm]{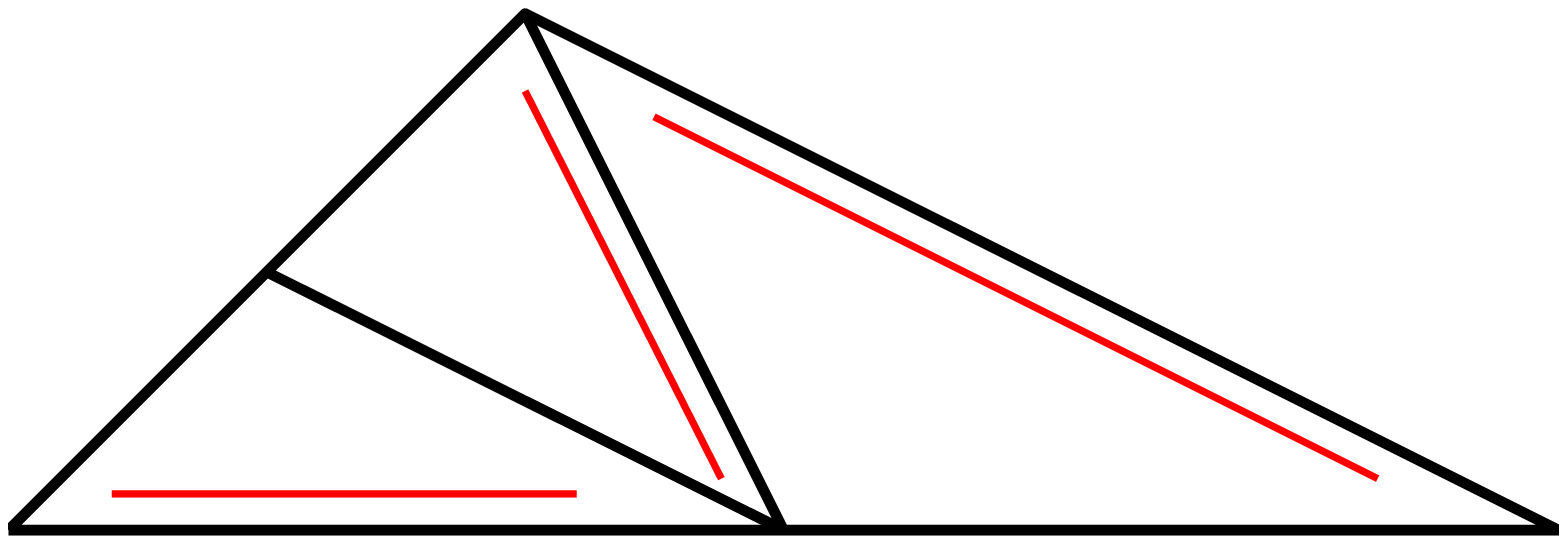}\quad
 \includegraphics[width=35mm]{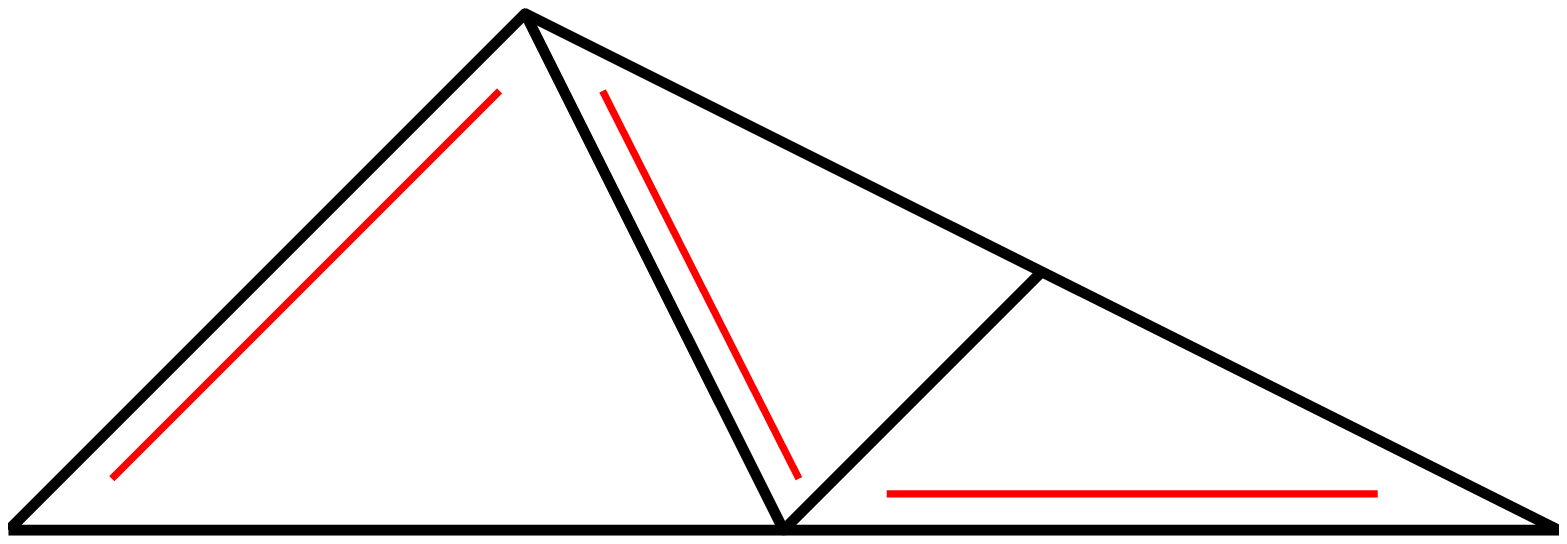}\quad
 \includegraphics[width=35mm]{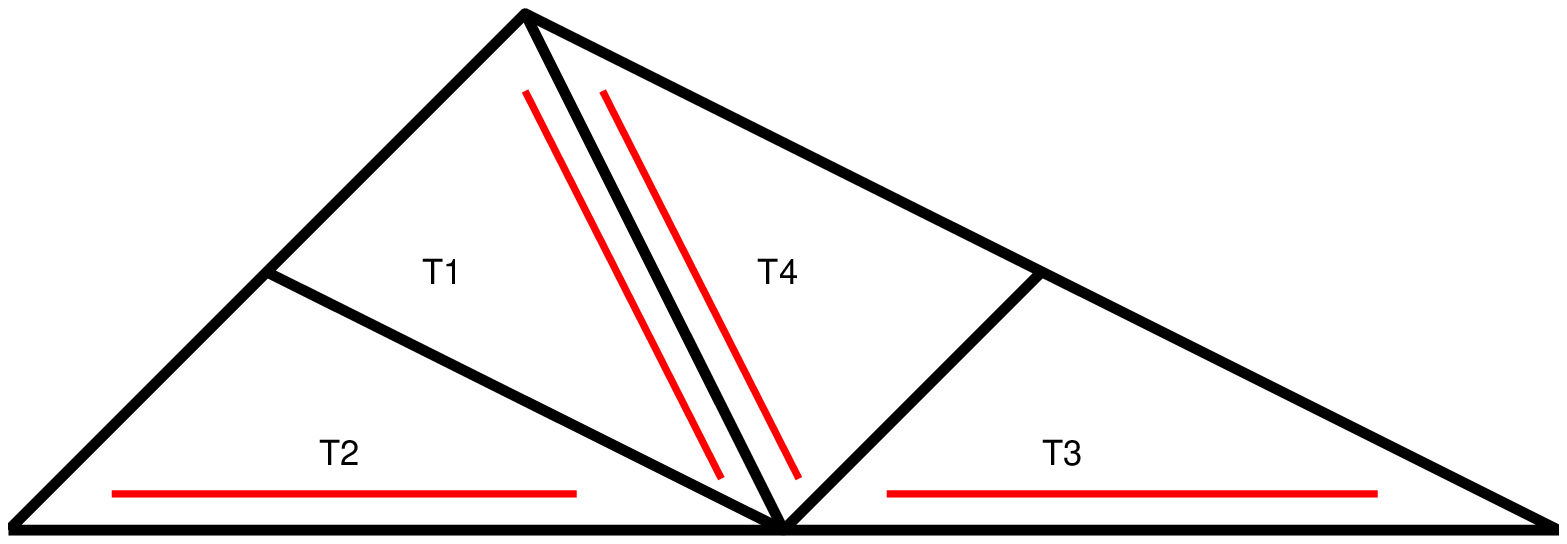}
 \caption{2D newest vertex bisection:
 For each triangle $T\in\TT$, there is one fixed \emph{reference edge},
 indicated by the double line (left, top). Refinement of $T$ is done by bisecting
 the reference edge, where its midpoint becomes a new node. The reference
 edges of the son triangles are opposite to this newest vertex (left, bottom).
 To avoid hanging nodes, one proceeds as follows:
 We assume that certain edges of $T$, but at least the reference edge,
 are marked for refinement (top).
 Using iterated newest vertex bisection, the element is then split into
 2, 3, or 4 son triangles (bottom).}
 \label{fig:nvb}
\end{figure}

%===============================================================================
\subsection{Adaptive algorithm \& main result}
%===============================================================================
\label{section:main}%
As for adaptive finite element methods~\cite{doerfler96,mns00,stevenson07,ckns,ffp14}, we consider the following adaptive algorithm which specifies the adaptive loop~\eqref{eq:semr}. Unlike the common algorithms in the context of adaptive FEM and BEM~\cite{axioms}, our algorithm does not only employ D\"orfler marking with respect to the error indicators $\eta_\ell(T)$, but also for the local contributions $\osc_\ell(T)$ of the data oscillations. This additional marking step is necessary to control the lack of Galerkin orthogonality~\eqref{eq:galerkin} and thus allows to prove (linear) convergence~\eqref{eq:mns:linear} of the adaptive algorithm.

For the mesh-refinement in step~(v) of Algorithm~\ref{algorithm:mns}, we employ newest vertex bisection (NVB); see, e.g.,~\cite{kpp13,stevenson08} for general dimension $d\ge2$ 
and Figure~\ref{fig:nvb} for an illustration for $d=2$.  
For a conforming triangulation $\TT$ and a set of marked elements $\MM\subseteq\TT$, let $\TT':=\refine(\TT,\MM)$ be the coarsest conforming triangulation generated by NVB such that all marked elements $T\in\MM$ have been refined, i.e., $\MM\subseteq\TT\backslash\TT'$.

\begin{algorithm}\label{algorithm:mns}
{\bfseries Input:} Let $0<\theta'\le\theta\le1$ and $\Cmark\ge1$ be given adaptivity parameters. Let
$\TT_0$ be a conforming triangulation of $\Omega$ which resolves possible discontinuities of $\A$ in the sense of~\eqref{eq3:A}. \\[1mm]
{\bfseries Then:} For $\ell=0,1,2,\dots$ iterate the following steps~{\rm(i)--(v)}:
\begin{itemize}
\item[\rm(i)] Solve~\eqref{eq:disc_systemfvem} to compute the discrete solution $u_\ell\in\SS^1_0(\TT_\ell)$ corresponding to $\TT_\ell$.
\item[\rm(ii)] Compute the refinement indicators $\eta_\ell(T)$ from~\eqref{eq:etaT} 
and the data oscillations from~\eqref{eq:oscT} for all $T\in\TT_\ell$.
\item[\rm(iii)] Construct a subset $\MM_\ell^\eta\subseteq\TT_\ell$ of up to the multiplicative factor $\Cmark$ minimal cardinality 
which satisfies the D\"orfler marking criterion 
\begin{align}\label{eq:doerfler:eta}
 \theta\,\eta_\ell^2 \le \eta_\ell(\MM_\ell^\eta)^2.
\end{align}
\item[\rm(iv)] Construct a subset $\MM_\ell\subseteq\TT_\ell$ of up to the multiplicative factor $\Cmark$ minimal cardinality which satisfies $\MM_\ell^\eta\subseteq\MM_\ell$ as well as the D\"orfler marking criterion 
\begin{align}\label{eq:doerfler:osc}
 \theta'\,\osc_\ell^2 \le \osc_\ell(\MM_\ell)^2.
\end{align}
\item[\rm(v)] Define $\TT_{\ell+1} := \refine(\TT_\ell,\MM_\ell)$.
\end{itemize}
\smallskip
{\bfseries Output:} Adaptively refined triangulations $\TT_\ell$, corresponding discrete solutions $u_\ell$, estimators $\eta_\ell$, and data oscillations $\osc_\ell$ for $\ell\ge0$.
\end{algorithm}

\begin{remark}
{\rm(i)} For $\Cmark=1$, the construction of the set $\MM_\ell^\eta$ in step~{\rm(iii)} of Algorithm~\ref{algorithm:mns} requires to sort the error indicators and thus results in logarithmic-linear complexity. Instead, for $\Cmark=2$, an approximate sorting based on binning allows to construct $\MM_\ell$ in linear complexity~\cite{stevenson07}.
The same applies for $\MM_\ell$ in step~{\rm(iv)} of Algorithm~\ref{algorithm:mns}.\\
{\rm(ii)} There exists a constant $H>0$ such that~\eqref{eq:disc_systemfvem} has a unique solution provided that $\norm{h_\ell}{L^\infty(\Omega)}\le H$; see Lemma~\ref{lem:bilinearcomp} below. Since NVB guarantees $\norm{h_\ell}{L^\infty(\Omega)} \le \norm{h_0}{L^\infty(\Omega)}$, it is sufficient to suppose that the initial triangulation $\TT_0$ is sufficiently fine.\\
{\rm(iii)} In step~{\rm(v)} of Algorithm~\ref{algorithm:mns}, one may use any variant of NVB which applies at most $n$ bisections per marked element, where $n\ge1$ is a fix constant.
\qed
\end{remark}

Next, we define certain nonlinear approximation classes, which are needed to prove optimal convergence behavior~\eqref{eq:intro2}.
To this end,
we write $\TT'\in\refine(\TT)$, if there exists some $n\in\N_0$, triangulations $\TT'_j$, and marked elements $\MM'_j\subseteq\TT'_j$ such that $\TT=\TT'_0$, $\TT'=\TT'_n$, and $\TT'_j = \refine(\TT'_{j-1},\MM'_{j-1})$ for all $j=1,\dots,n$.
Given $\TT_0$ from Algorithm~\ref{algorithm:mns}, 
we note that NVB ensures that all triangulations $\TT_\star\in\refine(\TT_0)$ are uniformly $\sigma$-shape regular~\eqref{eq:shaperegular}, where $\sigma$ depends only on $\TT_0$.

For $N>0$, we abbreviate $\T_N:=\set{\TT_\star\in\refine(\TT_0)}{\#\TT_\star-\#\TT_0\le N}$, where $\#\TT_\star$ denotes the number of elements in $\TT_\star$.
For all $s>0$, we define the approximability measure
\begin{align*}
 \norm{u}{\mathbb A_s} 
 := \sup_{N>0} \min_{\substack{\TT_\star\in\T_N}} (N+1)^s\eta_\star,
\end{align*}
where $\eta_\star$ denotes the weighted-residual error estimator~\eqref{eq:eta} associated with the optimal triangulation $\TT_\star$.
Note that $\norm{u}{\mathbb A_s} < \infty$ means that an algebraic decay $\eta_\star = \OO(N^{-s})$ is theoretically possible if for each $N>0$ the optimal triangulations $\TT_\star\in\T_N$ are chosen.

As a corollary of Theorem~\ref{theorem:cea}, we obtain that the corresponding approximation class (of all $u$ which satisfy $\norm{u}{\mathbb A_s}<\infty$) can equivalently be characterized by the so-called total error (i.e., energy error plus data oscillations) and hence coincides with the approximation classes from the FEM literature; see, e.g.,~\cite{stevenson07,ckns,ffp14}.

\begin{corollary}
There exists $H>0$ such that the following equivalence is valid if the initial triangulation $\TT_0$ is sufficiently fine, i.e., $\norm{h_0}{L^\infty(\Omega)}\le H$:
For all $s>0$, it holds
\begin{align*}
 \norm{u}{\mathbb A_s} < \infty
 \quad\Longleftrightarrow\quad
 \sup_{N>0}\min_{\substack{\TT_\star\in\T_N}}\inf_{v_\star\in\SS^1_0(\TT_\star)} 
 (N+1)^s\big(\enorm{u-v_\star} + \osc_\star(v_\star)\big)< \infty.
\end{align*}
\end{corollary}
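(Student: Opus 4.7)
The plan is essentially to invoke Theorem~\ref{theorem:cea} in a uniform manner over all refinements of $\TT_0$ and then pass to the minimum over $\T_N$ and the supremum over $N>0$ that appear in the definition of $\|u\|_{\mathbb A_s}$. The central observation is that the constant $\Ctot$ from Theorem~\ref{theorem:cea} depends only on $\Omega$, $H$, the $\sigma$-shape regularity of the mesh, and on the coefficient $\A$. Since NVB guarantees that every $\TT_\star\in\refine(\TT_0)$ is uniformly $\sigma$-shape regular (with $\sigma$ depending only on $\TT_0$) and satisfies $\|h_\star\|_{L^\infty(\Omega)}\le\|h_0\|_{L^\infty(\Omega)}\le H$, the same $\Ctot$ can be used for every $\TT_\star\in\refine(\TT_0)$, provided the initial triangulation $\TT_0$ is sufficiently fine.

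First I would rewrite Theorem~\ref{theorem:cea}, applied to an arbitrary $\TT_\star\in\refine(\TT_0)$, as the two-sided bound
\[
 \Ctot^{-1}\,\eta_\star \;\le\; \inf_{v_\star\in\SS^1_0(\TT_\star)}\bigl(\enorm{u-v_\star}+\osc_\star(v_\star)\bigr) \;\le\; \Ctot\,\eta_\star,
\]
where the upper bound uses the admissible choice $v_\star=u_\star$ together with the rightmost inequality of~\eqref{eq:totalerror}, and the lower bound is precisely the leftmost inequality of~\eqref{eq:totalerror}.

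Next, for each $N>0$ I would multiply this chain by $(N+1)^s$ and take the minimum over all $\TT_\star\in\T_N$; the equivalence survives because $\Ctot$ is independent of $\TT_\star$. Taking finally the supremum over $N>0$ yields
\[
 \Ctot^{-1}\,\|u\|_{\mathbb A_s} \;\le\; \sup_{N>0}\min_{\TT_\star\in\T_N}\inf_{v_\star\in\SS^1_0(\TT_\star)}(N+1)^s\bigl(\enorm{u-v_\star}+\osc_\star(v_\star)\bigr) \;\le\; \Ctot\,\|u\|_{\mathbb A_s},
\]
from which the claimed equivalence of finiteness is immediate.

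There is no substantial obstacle here: the corollary is really just a reformulation of the C\'ea-type estimate of Theorem~\ref{theorem:cea} at the level of approximation classes. The only delicate point is the uniformity of $\Ctot$ over the entire family $\refine(\TT_0)$, but this is already built into the conclusion of Theorem~\ref{theorem:cea} once one records that NVB preserves both the $L^\infty$-bound on the mesh-size and the shape-regularity constant.
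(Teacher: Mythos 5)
Your proposal is correct and follows exactly the paper's own argument: the paper's proof likewise observes that all $\TT_\star\in\refine(\TT_0)$ are uniformly $\sigma$-shape regular with $\norm{h_\star}{L^\infty(\Omega)}\le\norm{h_0}{L^\infty(\Omega)}$, so that the two-sided estimate~\eqref{eq:totalerror} holds with a uniform constant, and the equivalence of the approximability measures follows. Your write-up merely makes explicit the passage through the minimum over $\T_N$ and the supremum over $N$, which the paper leaves implicit.
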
%
\begin{proof}
Note that all triangulations $\TT_\star\in\refine(\TT_0)$ are uniformly $\sigma$-shape regular and satisfy $\norm{h_\star}{L^\infty(\Omega)}\le \norm{h_0}{L^\infty(\Omega)}$. Therefore, the claim follows from~\eqref{eq:totalerror}.
\end{proof}

Besides Theorem~\ref{theorem:cea}, the following theorem is the main result~\eqref{eq:intro1}--\eqref{eq:intro2} of our work. Unlike~\cite{stevenson07,ckns}, we follow~\cite{axioms,ffp14} and formulate the result with respect to the error estimator as this is the natural goal quantity of Algorithm~\ref{algorithm:mns}. In view of~\eqref{eq:totalerror}, the theorem can equivalently be formulated with respect to the total error. Its proof is given in Section~\ref{section:theorem} below.

\begin{theorem}\label{theorem:mns}
There is a constant $H>0$ such that the following statements {\rm(i)--(ii)} are valid provided that the initial triangulation $\TT_0$ is sufficiently fine, i.e., $\norm{h_0}{L^\infty(\Omega)}\le H$:
\begin{itemize}
\item[\rm(i)] For all $0<\theta'\le\theta\le1$, there exist constants $\Clin>0$ and $0<\qlin<1$ such that the adaptive 
Algorithm~\ref{algorithm:mns} guarantees linear convergence of the estimator in the sense of 
\begin{align}\label{eq:mns:linear}
 \eta_{\ell+n}^2\le\Clin\qlin^n\,\eta_\ell^2
 \quad\text{for all }\ell,n\in\N_0.
\end{align}
\item[\rm(ii)] There  exists a bound $0<\theta_{\rm opt}\le1$ such that for all $0<\theta<\theta_{\rm opt}$, the following holds:
Provided that there is a constant $\Cmns\ge1$ such that $\#\MM_\ell\le\Cmns\#\MM_\ell^\eta$ for all $\ell\in\N_0$,
there is a constant $\Copt>0$ such that for all $s>0$, it holds
\begin{align}\label{eq:mns:optimal}
 \norm{u}{\mathbb A_s} < \infty
 \quad\Longleftrightarrow\quad
 \eta_\ell \le \frac{\Copt^{1+s}}{(1-\qlin^{1/s})^s}\,\norm{u}{\mathbb A_s}\,(\#\TT_\ell-\#\TT_0)^{-s},
\end{align}
i.e., the adaptive algorithm leads asymptotically to each possible algebraic decay $s>0$ of the error estimator.
\end{itemize}
The constant $\theta_{\rm opt}$ depends only on $\Omega$, $H$, and uniform $\sigma$-shape regularity of the triangulations $\TT_\star\in\refine(\TT_0)$, the constants $\Clin$ and $\qlin$ depend additionally on $\theta$ and $\theta'$, while the constant $\Copt$ depends also on the use of NVB and on $\Cmark$ and $\Cmns$.
\end{theorem}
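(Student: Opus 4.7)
The plan is to verify the axioms of adaptivity in the spirit of \cite{axioms,ckns} adapted to the FVM setting, and then to deduce parts~(i) and~(ii) from these axioms. The main new difficulty compared to FEM is the absence of Galerkin orthogonality between $u$ and $u_\ell$: the FVM form $\AA_\ell(\cdot,\cdot)$ differs from the energy form $\AA(\cdot,\cdot)$, and this consistency defect must be absorbed by the oscillations. This is what forces the extra D\"orfler step~\eqref{eq:doerfler:osc} in Algorithm~\ref{algorithm:mns}, and also where the smallness hypothesis $\norm{h_0}{L^\infty(\Omega)}\le H$ enters.

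First, I would prove the standard stability and reduction axioms. For $\TT_\star\in\refine(\TT_\ell)$, $v_\ell\in\SS^1_0(\TT_\ell)$, $v_\star\in\SS^1_0(\TT_\star)$, I would show the Lipschitz bound
\begin{align*}
\bigl|\eta_\star(\TT_\ell\cap\TT_\star,v_\star)-\eta_\ell(\TT_\ell\cap\TT_\star,v_\ell)\bigr|\le C_{\rm stb}\,\enorm{v_\star-v_\ell}
\end{align*}
via inverse and trace estimates, and the reduction estimate $\eta_\star(\TT_\star\setminus\TT_\ell,v_\ell)^2\le q_{\rm red}\,\eta_\ell(\TT_\ell\setminus\TT_\star,v_\ell)^2$ with $q_{\rm red}<1$, using that NVB contracts the element sizes geometrically. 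Analogous statements for $\osc_\ell$ follow from~\eqref{eq:L2proj} and the same inverse estimates.

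Second, and this is the main obstacle, I would establish a quasi-orthogonality that substitutes for Galerkin orthogonality. The idea is to write
\begin{align*}
\enorm{u-u_\star}^2=\enorm{u-u_\ell}^2-\enorm{u_\star-u_\ell}^2+2\AA(u-u_\star,u_\star-u_\ell),
\end{align*}
and to bound the cross term by inserting $\AA_\star(u_\star,v_\star^\ast)=(f,v_\star^\ast)_\Omega$ from~\eqref{eq:disc_systemfvem} together with an appropriate FVM interpolant $v_\star^\ast\in\PP^0_0(\TT_\star^\ast)$ of $u_\star-u_\ell$. The resulting consistency defect $(\AA-\AA_\star)(u_\star,v_\star^\ast)$ is precisely what the proof of Theorem~\ref{theorem:cea} controls by $\osc_\ell$ times $\enorm{u_\star-u_\ell}$, provided $\norm{h_0}{L^\infty(\Omega)}\le H$ is small enough (this is what Lemma~\ref{lem:bilinearcomp} encapsulates). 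Using Young's inequality, this yields a perturbed Pythagorean identity
\begin{align*}
\enorm{u-u_\star}^2\le(1+\delta)\enorm{u-u_\ell}^2-(1-\delta)\enorm{u_\star-u_\ell}^2+C_\delta\,\osc_\ell^2.
\end{align*}
Third, I would prove discrete reliability: $\enorm{u_\star-u_\ell}^2\le C_{\rm drel}\bigl(\eta_\ell(\RR_{\ell,\star})^2+\osc_\ell(\RR_{\ell,\star})^2\bigr)$ for some $\RR_{\ell,\star}\supseteq\TT_\ell\setminus\TT_\star$ of cardinality $\#\RR_{\ell,\star}\lesssim\#(\TT_\ell\setminus\TT_\star)$. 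This proceeds by testing the difference of the discrete equations with a Scott--Zhang-type operator on $\SS^1_0(\TT_\star)$ that is locally zero on non-refined elements, bounding the volume and jump residuals in the usual way, and absorbing the consistency defect as above.

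With these four axioms, part~(i) follows by the contraction argument of \cite{ckns}: for small $\gamma,\gamma'>0$ the weighted quantity $\Delta_\ell:=\enorm{u-u_\ell}^2+\gamma\,\eta_\ell^2+\gamma'\,\osc_\ell^2$ contracts, $\Delta_{\ell+1}\le q\,\Delta_\ell$ with $q<1$. Here the D\"orfler criterion~\eqref{eq:doerfler:eta} drives contraction of the estimator, the extra criterion~\eqref{eq:doerfler:osc} drives contraction of the oscillations (and thereby absorbs the perturbation $C_\delta\osc_\ell^2$), and reliability~\eqref{eq:reliable} bounds the energy error by $\eta_\ell$. Linear convergence of $\eta_\ell$ alone in the form~\eqref{eq:mns:linear} then follows by combining the contraction of $\Delta_\ell$ with the equivalence $\eta_\ell^2\simeq\Delta_\ell$ granted by~\eqref{eq:reliable} and~\eqref{eq:efficient}; one sums the geometric series in the usual way to pass from one-step contraction to tail estimate~\eqref{eq:mns:linear}.

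For part~(ii), I would use Stevenson's comparison lemma together with the optimality of D\"orfler marking: given $\norm{u}{\mathbb A_s}<\infty$, for each $\ell$ there is $\TT_\star^{\rm opt}\in\refine(\TT_0)$ with $\#\TT_\star^{\rm opt}-\#\TT_0\lesssim(\norm{u}{\mathbb A_s}/\eta_\ell)^{1/s}$ such that $\eta_{\ell\oplus\star}\le\kappa\eta_\ell$ for any prescribed $\kappa$. Stability and discrete reliability then force the overlay $\TT_\ell\oplus\TT_\star^{\rm opt}$ to satisfy the estimator D\"orfler criterion with parameter $\theta<\theta_{\rm opt}$, provided $\theta_{\rm opt}$ is chosen from the discrete reliability constant so that $(1-\theta/\theta_{\rm opt})$ exceeds a fixed multiple of $C_{\rm drel}$. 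Quasi-minimality of $\MM_\ell^\eta$ thus gives $\#\MM_\ell^\eta\lesssim\norm{u}{\mathbb A_s}^{1/s}\eta_\ell^{-1/s}$, and the hypothesis $\#\MM_\ell\le\Cmns\#\MM_\ell^\eta$ transfers this to $\#\MM_\ell$. The closure estimate for NVB from \cite{stevenson08,kpp13}, $\#\TT_\ell-\#\TT_0\lesssim\sum_{j<\ell}\#\MM_j$, combined with linear convergence~\eqref{eq:mns:linear} to sum the geometric series over $j$, yields~\eqref{eq:mns:optimal}.

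The hardest single step is the quasi-orthogonality: the FVM consistency defect must be controlled by oscillations in a Young-type inequality with an arbitrarily small coefficient, uniformly along the adaptive sequence. It is precisely this step that requires the smallness assumption $\norm{h_0}{L^\infty(\Omega)}\le H$ and makes the extra oscillation marking~\eqref{eq:doerfler:osc} unavoidable.
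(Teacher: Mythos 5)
Your overall architecture coincides with the paper's: verify the axioms (A1)--(A4) of the adaptivity framework, prove a quasi-Galerkin orthogonality in which the FVM consistency defect is controlled by the data oscillations, use the extra D\"orfler marking~\eqref{eq:doerfler:osc} together with an oscillation contraction to absorb that defect into a contracting quantity $\enorm{u-u_\ell}^2+\gamma\,\eta_\ell^2+\mu\,\osc_\ell^2$, and then run the comparison-lemma/overlay/NVB-closure argument for rate optimality. Your stability/reduction step, your quasi-orthogonality, and your derivations of (i) and (ii) are essentially Lemma~\ref{lemma:A1-A2}, Lemma~\ref{lemma:osc}, Lemma~\ref{lemma:orth}, Proposition~\ref{prop:mns:linear}, and the appeal to the abstract optimality theorem (modulo a sign in your Pythagoras expansion, which is harmless since the cross term is estimated in absolute value).

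The genuine gap is in your discrete reliability step. ``Testing the difference of the discrete equations with a Scott--Zhang-type operator on $\SS^1_0(\TT_\star)$'' is the FEM argument, and it does not transfer: the two FVM equations are posed against the \emph{non-nested} test spaces $\PP^0_0(\TT_\ell^*)$ and $\PP^0_0(\TT_\star^*)$ of piecewise constants on two different dual meshes, so there is no common test space in which to subtract the equations, and no Galerkin orthogonality in $\AA(\cdot,\cdot)$ with which a quasi-interpolant could localize the residual. The paper instead (i) converts $\AA(u_\star-u_\ell,v_\star)$ into $\AA_\star(u_\star-u_\ell,\II_\star^*v_\star)$ at the price of the defect $\c{bil}\sum_T h_T^2\enorm{u_\star-u_\ell}_T^2$, (ii) proves a discrete defect identity (Lemma~\ref{lem:discdefect}) expressing $\AA_\star(u_\star-u_\ell,v_\star^*)$ through the coarse residuals $R_\ell$, $J_\ell$ paired with $v_\star^*-v_\ell^*$, and (iii) chooses $v_\star^*=\II_\star^*v_\star$ and a $v_\ell^*\in\PP^0_0(\TT_\ell^*)$ equal to $\II_\ell^*v_\star$ on unrefined dual boxes and to $\Pi_\ell^*v_\star$ on refined ones, so that the pairing vanishes outside the refined region; the surviving terms are then estimated with Poincar\'e- and trace-type inequalities on the dual boxes (Lemma~\ref{lemma:Pi*}). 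Without this dual-mesh construction you do not obtain the localized bound $\enorm{u_\star-u_\ell}\lesssim\eta_\ell(\RR_\ell)$ with $\#\RR_\ell\lesssim\#(\TT_\ell\setminus\TT_\star)$ that the optimality proof requires. Note also that the smallness $\norm{h_0}{L^\infty(\Omega)}\le H$ is needed not only in the quasi-orthogonality but precisely here, to absorb $\c{bil}\sum_T h_T^2\enorm{u_\star-u_\ell}_T^2$ into the left-hand side, and again in the contraction argument to absorb the $\norm{h_{\ell+1}}{L^\infty(\Omega)}^2\enorm{u_{\ell+1}-u_\ell}^2$ perturbation produced by the oscillation reduction (B1')--(B2').
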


\begin{remark}
{\rm(i)} 
%We note that the set $\MM_\ell$ of marked elements in Algorithm~\ref{algorithm:mns} satisfies $\#\MM_\ell \le \Cmark(\#\MM_\ell^\eta + \#\MM_\ell^\osc)$ 
%if $\MM_\ell^\osc\subseteq\TT_\ell$ is a set of up to the multiplicative factor $\Cmark$ minimal cardinality which satisfies $\theta'\,\osc_\ell^2 \le \osc_\ell(\MM_\ell^\osc)^2$.
The additional assumption in Theorem~\ref{theorem:mns} {\rm(ii)} assumes that marking~\eqref{eq:doerfler:osc} of the data oscillations is negligible with respect to the overall number of marked elements. We note that $\theta'>0$ can be chosen arbitrarily small so that, in practice,~\eqref{eq:doerfler:eta} already implies~\eqref{eq:doerfler:osc}.\\
{\rm(ii)} Instead of the additional marking step {\rm(iv)} in Algorithm~\ref{algorithm:mns}, one can also define $\MM_\ell:=\MM_\ell^\eta$ and monitor {\sl a~posteriori} if
\begin{align}
 \sup_{\ell_0\in\N_0}\inf_{\ell\ge\ell_0}\frac{\osc_\ell(\MM_\ell)^2}{\osc_\ell^2} =: \theta' > 0.
\end{align}
In this case, linear convergence~\eqref{eq:mns:linear} with optimal rates~\eqref{eq:mns:optimal} follows. However, for $\theta'=0$, even convergence remains mathematically open, so that we favor the present form of Algorithm~\ref{algorithm:mns} which guarantees~\eqref{eq:mns:linear}, while~\eqref{eq:mns:optimal} requires an additional assumption.
\qed
\end{remark}

%%%%%%%%%%%%%%%%%%%%%%%%%%%%%%%%%%%%%%%%%%%%%%%%%%%%%%%%%%%%%%%%%%%%%%%%%%%%%%%%
%%%%%%%%%%%%%%%%%%%%%%%%%%%%%%%%%%%%%%%%%%%%%%%%%%%%%%%%%%%%%%%%%%%%%%%%%%%%%%%%
\section{Proofs}
%%%%%%%%%%%%%%%%%%%%%%%%%%%%%%%%%%%%%%%%%%%%%%%%%%%%%%%%%%%%%%%%%%%%%%%%%%%%%%%%
%%%%%%%%%%%%%%%%%%%%%%%%%%%%%%%%%%%%%%%%%%%%%%%%%%%%%%%%%%%%%%%%%%%%%%%%%%%%%%%%
\label{section:proofs}%

%===============================================================================
\subsection{Axioms of adaptivity}
%===============================================================================
\label{section:axioms}%
In~\cite[Theorem~4.1]{axioms}, it is proved in a general framework 
that the following set of four axioms is sufficient (and partially even necessary) to guarantee linear convergence with optimal algebraic rates in the sense of Theorem~\ref{theorem:mns}. 
In particular, the model problem, the discretisation, and the estimator
enter only through the proof of these axioms.
Implicitly, we assume that given $\TT_k\in\refine(\TT_0)$, the corresponding FVM solution $u_k\in\SS^1_0(\TT_k)$ is well-defined. With this convention, the axioms read:
\begin{itemize}
\item[\bf(A1)] \textbf{stability on non-refined elements}:
There exists a constant $C>0$ such that for all 
$\TT_\diamond\in\refine(\TT_0)$ and all $\TT_\star\in\refine(\TT_\diamond)$, it holds
\begin{align*}
 |\eta_\star(\TT_\star\cap\TT_\diamond)-\eta_\diamond(\TT_\star\cap\TT_\diamond)|
 \le C\,\enorm{u_\star-u_\diamond}.
\end{align*}

\item[\bf(A2)] \textbf{reduction on refined elements}:
There exist constants $0<q<1$ and $C>0$ such that
for all $\TT_\diamond\in\refine(\TT_0)$ and all $\TT_\star\in\refine(\TT_\diamond)$, it holds
\begin{align*}
 \eta_\star(\TT_\star\backslash\TT_\diamond)^2
 \le q\,\eta_\diamond(\TT_\diamond\backslash\TT_\star)^2
 + C\,\enorm{u_\star-u_\diamond}^2.
\end{align*}

\item[\bf(A3)] \textbf{general quasi-orthogonality}:
There exists $C>0$ such that for all $\ell\in\N_0$, it holds
\begin{align*}
 \sum_{k=\ell}^\infty \enorm{u_{k+1}-u_k}^2
 \le C\,\eta_\ell^2.
\end{align*}

\item[\bf(A4)] \textbf{discrete reliability}:
There exists a constant $C>0$ such that for all $\TT_\diamond\in\refine(\TT_0)$ 
and all $\TT_\star\in\refine(\TT_\diamond)$, there exists some set 
$\RR_\diamond\subseteq\TT_\diamond$ with $\TT_\diamond\backslash\TT_\star\subseteq\RR_\diamond$ and
\begin{align*}
 \#\RR_\diamond \le C\,\#(\TT_\diamond\backslash\TT_\star)
 \quad\text{as well as}\quad
 \enorm{u_\star-u_\diamond} \le C\,\eta_\diamond(\RR_\diamond).
\end{align*}

\end{itemize}

The subsequent analysis proves that Algorithm~\ref{algorithm:mns} for our adaptive
FVM guarantees 
the validity of (A1)--(A4) if the initial triangulation $\TT_0$ is sufficiently fine. 

%===============================================================================
\subsection{Stability \& reduction of error estimator}
%===============================================================================
\label{section:A1-A2}%
The following lemma is stated without a proof, since the details are implicitly found in~\cite[Section~3.1]{ckns}. 
Moreover, (A1)--(A2) do not only hold for the FVM solutions $u_\star$ and $u_\diamond$ (or for FEM solutions in~\cite{ckns}), but for all $v_\star\in\SS^1_0(\TT_\star)$ and $v_\diamond\in\SS^1_0(\TT_\diamond)$.
The reader may simplify the proof of Lemma~\ref{lemma:osc} below, which provides slightly sharper estimates for the data oscillations.

\begin{lemma}\label{lemma:A1-A2}
The residual error estimator satisfies the following two properties~{\rm(A1')--(A2')}:
\begin{itemize}
\item[\bf(A1')]%\textbf{stability on non-refined elements}:
There exists a constant $C>0$ such that for all $\TT_\diamond\in\refine(\TT_0)$, all $\TT_\star\in\refine(\TT_\diamond)$, and all $v_\star\in\SS^1_0(\TT_\star)$, $v_\diamond\in\SS^1_0(\TT_\diamond)$, it holds
\begin{align*}
 |\eta_\star(\TT_\star\cap\TT_\diamond,v_\star)-\eta_\diamond(\TT_\star\cap\TT_\diamond,v_\diamond)|
 \le C\,\Big(\sum_{T\in\patch_\star(\TT_\star\cap\TT_\diamond)}\enorm{v_\star-v_\diamond}^2_{T}\Big)^{1/2}.
 % \norm{h_\star}{L^\infty(\Omega)}\enorm{u_\star-u_\diamond}.
\end{align*}
\item[\bf(A2')]%\textbf{reduction on refined elements}:
There exist constants $0<q<1$ and $C>0$ such that
for all $\TT_\diamond\in\refine(\TT_0)$, all $\TT_\star\in\refine(\TT_\diamond)$, and all $v_\star\in\SS^1_0(\TT_\star)$, $v_\diamond\in\SS^1_0(\TT_\diamond)$, it holds
\begin{align*}
 \eta_\star(\TT_\star\backslash\TT_\diamond,v_\star)^2
 \le q\,\eta_\diamond(\TT_\diamond\backslash\TT_\star,v_\diamond)^2
 + C\,\Big(\sum_{T\in\patch_\star(\TT_\star\backslash\TT_\diamond)}\enorm{v_\star-v_\diamond}_{T}^2\Big)^{1/2}.
% + C\,\norm{h_\star}{L^\infty(\Omega)}^2\,\enorm{u_\star-u_\diamond}^2.
\end{align*}
\end{itemize}
Here, $\patch_\star(\UU_\star):=\set{T\in\TT_\star}{\exists T'\in\UU_\star\quad T\cap T'\neq\emptyset}$ denotes the patch of $\UU_\star\subseteq\TT_\star$ in $\TT_\star$.
The constants $C$ and $q$ depend only on uniform $\sigma$-shape regularity of all $\TT_\star\in\refine(\TT_0)$ and the assumptions~\eqref{eq:A}--\eqref{eq3:A} on $\A$. In particular, this implies~{\rm(A1)--(A2)}.\qed
\end{lemma}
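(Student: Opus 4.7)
The plan is to follow the strategy developed in~\cite[Section~3.1]{ckns} for FEM, adapting it to the present FVM setting. The key structural observation is that, since $\TT_\star\in\refine(\TT_\diamond)$, we have $\SS^1_0(\TT_\diamond)\subseteq\SS^1_0(\TT_\star)$, so that $w:=v_\star-v_\diamond$ lies in $\SS^1_0(\TT_\star)$ and in particular is affine on every $T\in\TT_\star$. Moreover, on each non-refined element $T\in\TT_\star\cap\TT_\diamond$ the local mesh-sizes agree and the piecewise divergence operators $\div_\star,\div_\diamond$ coincide; the $\TT_\star$-facets on $\partial T$ are just subdivisions of the $\TT_\diamond$-facets, so the normal jump computed in either mesh agrees on each common piece. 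With these preparations, the two proofs mirror those of~\cite{ckns}, the only extra contribution coming from the elementwise volume residual of $\A\nabla v$.

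For~(A1'), I apply the reverse triangle inequality in the quasi-norm $\bigl(h_T^2\|\cdot\|_{L^2(T)}^2+h_T\|\cdot\|_{L^2(\partial T\setminus\Gamma)}^2\bigr)^{1/2}$ defining $\eta_\star(T,v_\star)$. On a non-refined element the source $f$ cancels, reducing the claim to bounding $h_T\,\|\div(\A\nabla w)\|_{L^2(T)}$ and $h_T^{1/2}\,\|\jump{\A\nabla w}\|_{L^2(\partial T\setminus\Gamma)}$ by energy-norm quantities of $w$. Since $w|_T$ is affine, Leibniz yields $\div(\A\nabla w)=(\div\A)\cdot\nabla w$ on $T$, which by~\eqref{eq3:A} is bounded by $\|\A\|_{W^{1,\infty}(T)}\,\|\nabla w\|_{L^2(T)}$; absorbing the uniformly bounded factor $h_T$ gives $\lesssim\enorm{w}_T$. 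A scaled trace inequality on the two elements adjacent to each facet controls the jump term by $\lesssim\enorm{w}_{\patch_\star(\{T\})}$. Squaring, summing over $T\in\TT_\star\cap\TT_\diamond$, and absorbing the bounded patch overlap (a consequence of $\sigma$-shape regularity) via discrete Cauchy--Schwarz delivers~(A1').

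For~(A2'), the triangle inequality splits $\eta_\star(\TT_\star\setminus\TT_\diamond,v_\star)$ into $\eta_\star(\TT_\star\setminus\TT_\diamond,v_\diamond)$ plus $|\eta_\star(\TT_\star\setminus\TT_\diamond,v_\star)-\eta_\star(\TT_\star\setminus\TT_\diamond,v_\diamond)|$; the latter is controlled by the same argument as in~(A1'). For the former, NVB guarantees that every $T\in\TT_\star\setminus\TT_\diamond$ has a unique ancestor $T'\in\TT_\diamond\setminus\TT_\star$ with $|T|\le|T'|/2$, hence $h_T\le 2^{-1/d}h_{T'}$. Since $v_\diamond\in\SS^1_0(\TT_\diamond)$ is $H^1$-conforming on $\TT_\diamond$ and $\TT_\diamond$ is aligned with the discontinuities of $\A$, the jump $\jump{\A\nabla v_\diamond}$ vanishes on every facet lying strictly inside $T'$, while on facets $F\subseteq\partial T'$ it agrees with the $\TT_\diamond$-jump. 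Summing over the children of a fixed $T'$ and using $h_T\le 2^{-1/d}h_{T'}$ yields $\eta_\star(\TT_\star\setminus\TT_\diamond,v_\diamond)^2\le 2^{-1/d}\,\eta_\diamond(\TT_\diamond\setminus\TT_\star,v_\diamond)^2$. A Young-type splitting $(a+b)^2\le(1+\varepsilon)a^2+(1+1/\varepsilon)b^2$ with $\varepsilon>0$ small enough then produces the reduction factor $q:=(1+\varepsilon)2^{-1/d}<1$ and completes~(A2').

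The main technical obstacle in transferring the FEM proof to FVM is the volume residual: for FEM with piecewise constant $\A$ one has $\div\A\nabla v_\star\equiv 0$ elementwise, so the term is automatically zero. In our setting it survives through the piecewise $W^{1,\infty}$-structure~\eqref{eq3:A} of $\A$ and is only controllable after absorbing a factor $h_T$, which forces the constants to depend on a uniform upper bound for the mesh-size (implicit in the dependence on $\TT_0$ stated in the lemma). Apart from this point, the argument is strictly parallel to~\cite[Section~3.1]{ckns} and uses no FVM-specific machinery — indeed, the properties (A1')--(A2') concern the estimator alone and do not involve the discrete solutions themselves.
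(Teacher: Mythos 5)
Your proposal is correct and follows exactly the route the paper intends: the paper omits the proof of this lemma, referring to \cite[Section~3.1]{ckns}, and your argument (reverse triangle inequality in $\ell_2$ plus inverse/trace estimates for the nestedness perturbation, and the one-bisection volume reduction $|T|\le|T'|/2$ combined with vanishing interior jumps and Young's inequality for the reduction part) is precisely that argument, mirroring the paper's own detailed proof of the analogous oscillation Lemma~\ref{lemma:osc}. The only discrepancy is immaterial: you prove the perturbation term in~(A2') in its squared form, consistent with~(A2), whereas the displayed statement carries a (typographical) exponent $1/2$.
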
%

%===============================================================================
\subsection{Stability \& reduction of data oscillations}
%===============================================================================
\label{section:B1B2}%
Our proof of linear convergence~\eqref{eq:mns:linear}
in Section~\ref{section:A3} requires to control the data oscillations which arise in some quasi-Galerkin orthogonality~\eqref{eq:galerkin}. This is done by means of two additional axioms which structurally follow~(A1')--(A2'), but have an additional factor $\norm{h_\star}{L^\infty(\Omega)}$ in the perturbation term.
Essentially, the following lemma is a sharper variant of the proofs in~\cite[Lemma~5.2]{xzz06}
and~\cite[Section~3.1]{ckns}:
%with $\norm{h_\star}{L^\infty(\Omega)}$ replaced by the upper bound $\norm{h_\ell}{L^\infty(\Omega)}$.

\begin{lemma}\label{lemma:osc}
The data oscillations satisfy the following two properties~{\rm(B1')--(B2')}:
\begin{itemize}
\item[\bf(B1')]%\textbf{stability on non-refined elements}:
There exists a constant $C>0$ such that for all $\TT_\diamond\in\refine(\TT_0)$, all $\TT_\star\in\refine(\TT_\diamond)$, and all $v_\star\in\SS^1_0(\TT_\star)$, $v_\diamond\in\SS^1_0(\TT_\diamond)$, it holds
\begin{align*}
 |\osc_\star(\TT_\star\cap\TT_\diamond,v_\star)-\osc_\diamond(\TT_\star\cap\TT_\diamond,v_\diamond)|
 \le C\,\Big(\sum_{T\in\patch_\star(\TT_\star\cap\TT_\diamond)}h_{T}^2\enorm{v_\star-v_\diamond}^2_{T}\Big)^{1/2}.
\end{align*}

\item[\bf(B2')]%\textbf{reduction on refined elements}:
There exist constants $0<q<1$ and $C>0$ such that
for all $\TT_\diamond\in\refine(\TT_0)$, all $\TT_\star\in\refine(\TT_\diamond)$, and all $v_\star\in\SS^1_0(\TT_\star)$, $v_\diamond\in\SS^1_0(\TT_\diamond)$, it holds
\begin{align*}
 \osc_\star(\TT_\star\backslash\TT_\diamond,v_\star)^2
 \le q\,\osc_\diamond(\TT_\diamond\backslash\TT_\star,v_\diamond)^2
 + C\,\Big(\sum_{T\in\patch_\star(\TT_\star\backslash\TT_\diamond)}h_{T}^2\enorm{v_\star-v_\diamond}_{T}^2\Big)^{1/2}
\end{align*}
\end{itemize}
Here, $\patch_\star(\UU_\star):=\set{T\in\TT_\star}{\exists T'\in\UU_\star\quad T\cap T'\neq\emptyset}$ denotes the patch of $\UU_\star\subseteq\TT_\star$ in $\TT_\star$.
The constants $C$ and $q$ depend only on uniform $\sigma$-shape regularity of the triangulations $\TT_\star\in\refine(\TT_0)$ and the assumptions~\eqref{eq:A}--\eqref{eq3:A} on $\A$.
\end{lemma}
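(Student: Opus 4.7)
The overall plan is to mirror the scheme of Lemma~\ref{lemma:A1-A2} (following~\cite[Section~3.1]{ckns}), but to exploit the oscillation property $\|(1-\Pi_\star)g\|_{L^2(\tau)}\le\|g-c\|_{L^2(\tau)}$ valid for any constant $c$. Combined with the observation that $w := v_\star-v_\diamond$ has constant gradient on every non-refined element $T\in\TT_\star\cap\TT_\diamond$ (and piecewise constant gradient on refined elements), this is precisely what produces the additional weight $h_T^2$ in the perturbation term of (B1')--(B2') compared to (A1')--(A2').

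For (B1'), I would fix $T\in\TT_\star\cap\TT_\diamond$ and argue componentwise. Since $\Pi_\star|_T=\Pi_\diamond|_T$ and $\div_\star|_T=\div_\diamond|_T$ on $T$, the volume difference reduces to $h_T\|(1-\Pi_\star)\div(\A\nabla w)\|_{L^2(T)}$. Because $\nabla w|_T$ is constant and $\A\in W^{1,\infty}(T)$, we have $\div(\A\nabla w)|_T=(\div\A)\cdot\nabla w|_T$ which is bounded pointwise by $\|\A\|_{W^{1,\infty}(T)}|\nabla w|$; the trivial bound $\|(1-\Pi_\star)g\|_{L^2(T)}\le\|g\|_{L^2(T)}$ combined with shape regularity then yields the $h_T^2\enorm{w}_T^2$ contribution. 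For each facet $\facet\subseteq\partial T$ in $\EE_\star$, the jump $g := \jump{\A\nabla w}|_\facet$ is $W^{1,\infty}$ with tangential derivative pointwise bounded by $\|\nabla\A\|_{L^\infty}|\nabla w|$ on the two sides of $\facet$. The facetwise Poincar\'e inequality $\|(1-\Pi_\star)g\|_{L^2(\facet)}\lesssim h_\facet\|\nabla_\facet g\|_{L^2(\facet)}$, together with $|\facet|\simeq h_T^{d-1}$ and $h_\facet\simeq h_T$, produces a facet contribution bounded by $h_T^2\enorm{w}_{T'}^2$ summed over the two elements $T'$ adjacent to $\facet$. Summing these local estimates over all $T\in\TT_\star\cap\TT_\diamond$ gives the sum over $\patch_\star(\TT_\star\cap\TT_\diamond)$ asserted in (B1'). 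The case where a $\TT_\diamond$-facet of $T$ is subdivided in $\TT_\star$ (because $T$'s neighbor was refined) is handled by applying the facet bound on each $\TT_\star$-sub-facet and summing.

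For (B2'), the strategy parallels (A2'), with the crucial extra ingredient that for every $g\in L^2(T_\diamond)$ the choice $c=(\Pi_\diamond g)|_{T_\diamond}$ in the projection inequality yields $\sum_{T\subset T_\diamond,\,T\in\TT_\star}\|(1-\Pi_\star)g\|_{L^2(T)}^2\le\|(1-\Pi_\diamond)g\|_{L^2(T_\diamond)}^2$. Newest vertex bisection enforces $h_T\le 2^{-1/d}h_{T_\diamond}$ for every child $T$ of a refined ancestor $T_\diamond\in\TT_\diamond\setminus\TT_\star$, producing the contraction factor $q=2^{-2/d}<1$ in both the volume and facet weights. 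Combining this contraction with a triangle inequality that inserts $v_\diamond$ into the oscillation of $v_\star$ on refined elements, and applying the perturbation analysis of (B1'), delivers (B2') with the $h_T^2$-weighted perturbation. The hard part I expect is the careful bookkeeping for jump contributions at facets separating refined and non-refined elements, but this bookkeeping is precisely as in~\cite[Section~3.1]{ckns}, and the improved $h_T^2$-weight passes through the arguments without modification.
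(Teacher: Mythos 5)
Your proposal follows essentially the same route as the paper: reduce both (B1') and (B2') to a stability estimate of $\osc_\star(\UU_\star,\cdot)$ in its discrete argument via the reverse $\ell_2$-triangle inequality, exploit that $\nabla(v_\star-v_\diamond)$ is piecewise constant together with $\A\in W^{1,\infty}(T_0)$ to gain the extra factor $h_T$ in both the volume and jump terms (the paper subtracts the integral mean $\overline\A$ and uses a Poincar\'e inequality in $W^{1,\infty}(T)$ where you invoke a facetwise Poincar\'e inequality for the jump; both are fine), and for (B2') combine the best-approximation property of $\Pi_\star$, the vanishing of new interior jumps of $\A\nabla v_\diamond$ inside a refined coarse element, and the mesh-size reduction under bisection. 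One small slip: the facet contribution of the oscillation carries the weight $h_T^1$, not $h_T^2$, so bisection contracts it only by $2^{-1/d}$ rather than the $2^{-2/d}$ you claim for ``both'' weights; the correct reduction factor before the Young-inequality adjustment is therefore $\widetilde q=2^{-1/d}$, which of course still suffices since any $\widetilde q<1$ works.
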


\begin{proof}
{\bf Step~1.} For all $w_\star\in\SS^1_0(\TT_\star)$ and all $\UU_\star\subseteq\TT_\star$, it holds
\begin{align}\label{eq:osc:step1}
 \sum_{T\in\UU_\star}h_T^2\norm{(1-\Pi_\star)\,\div_\star \A\nabla w_\star}{L^2(T)}^2
 \le C\,\sum_{T\in\UU_\star}h_{T}^2\norm{\nabla w_\star}{L^2(T)}^2,
\end{align}
where $C>0$ depends only on $\max_{T_0\in\TT_0}\norm{\A}{W^{1,\infty}(T_0)}$:
For $T\in\UU_\star$, it holds
\begin{align*}%\label{eq:osc:proof:R}
 h_T\norm{(1-\Pi_\star)\,\div_\star \A\nabla w_\star}{L^2(T)}
 \le h_T\norm{\,\div_\star \A\nabla w_\star}{L^2(T)}
% \\&
 \lesssim h_T\norm{\A}{W^{1,\infty}(T)} \norm{\nabla w_\star}{L^2(T)},
\end{align*}
since $\nabla w_\star$ is constant on $T$. All elements $T\in\TT_\star$ satisfy $T\subseteq T_0$ for some $T_0\in\TT_0$, i.e., 
$\norm{\A}{W^{1,\infty}(T)}\leq \max_{T_0\in\TT_0}\norm{\A}{W^{1,\infty}(T_0)}$.
Summing this estimate over all $T\in\UU_\star$, we thus obtain~\eqref{eq:osc:step1}.

{\bf Step~2.} For all $w_\star\in\SS^1_0(\TT_\star)$ and all $\UU_\star\subseteq\TT_\star$, it holds
\begin{align}\label{eq:osc:step2}
 \sum_{T\in\UU_\star}h_T\norm{(1-\Pi_\star)J_\star(w_\star) }{L^2(\partial T\backslash\Gamma)}^2
 \le C\,\sum_{T\in\patch_\star(\UU_\star)}h_{T}^2\norm{\nabla w_\star}{L^2(T)}^2,
\end{align}
where $C>0$ depends only on $\max_{T_0\in\TT_0}\norm{\A}{W^{1,\infty}(T_0)}$ and $\sigma$-shape regularity of $\TT_\star$: Let $T\in\UU_\star$ and
$\facet\in\EE_T\cap\EE_\star^\Omega$ be a facet of $T$ which is not on the boundary $\Gamma$. Let $\overline\A=(1/|T|)\int_T\A\,dx$, i.e., piecewise integral means of the entries in $\A$. 
Note that $\nabla w_\star$ as well as the outer normal vector $\nf_T$ of $T$ are constant on $\facet$. The uniform continuity of $\A|_T$, the Poincar\'e inequality in $W^{1,\infty}(T)$, and a scaling argument show
\begin{align*}
 h_T^{1/2}\norm{(1-\Pi_\star)\big(\A\nabla w_\star\cdot \nf_T\big)}{L^2(\facet)}
 &\le h_T^{1/2}\norm{(\A-\overline{\A})\nabla w_\star\cdot\nf_T}{L^2(\facet)}
 \\&
 \le \norm{\A-\overline{\A}}{L^\infty(T)}\,h_T^{1/2}\,\norm{\nabla w_\star}{L^2(\facet)}
 \\&
 \lesssim h_T\,\norm{\A}{W^{1,\infty}(T)}\,\norm{\nabla w_\star}{L^2(T)}.
\end{align*}
%\improve{GILT DIE POINCARE-UNGLEICHUNG IN $W^{1,\infty}$? GGF.\ M\"USSEN WIR $\A\in\CC^1(T)$ ANNEHMEN STATT $\A\in W^{1,\infty}(T)$! Folgendes: Evans auf S280 zeigt sie fuer $W^{1,\infty}$, allerdings glaube ich, dass dafuer glatter Rand vorausgesetzt wird. S290 ist sie nochmals explizit angegeben, aber ebenfalls mit $C^1$ Rand. Das wuerde ja ansonsten mit einem Skalierungsargument passen, oder? Carstensen Funken in Constants in Clement-interpolant error ...East-West J. Numer. Math. Vol 8, 2000, geben in Theorem 2.1 (eigentlich Payne Weinberger) fuer 2D und fuer convexe Lipschitz Gebiete explizite Konstanten an, die auch fuer $W^{1,\infty}$ gelten sollen. In Remark 2.2. verweisen sie auf Constants In Some Inequalities Of Analysis by Solomon G. Mikhlin, diese Buch konnte ich allerdings nicht auftreiben.}
Let $T'\in\TT_\star$ be the unique element with $\facet=T\cap T'$. 
% Then, the facet residual on $\facet$ reads
% \begin{align*}
%  (1-\Pi_\star)J_\star(w_\star)|_{F} = (1-\Pi_\star)\big(\A|_T\nabla w_\star|_T\cdot\nf_T\big)
%  + (1-\Pi_\star)\big(\A|_{T'}\nabla w_\star|_{T'}\cdot\nf_{T'}\big),
% \end{align*}
% where, e.g., $\A|_T$ denotes the trace of $\A$ to the facet $\facet$ from the element $T$ and $\nf_T$ is the outer normal vector of $T$ on $\facet$ (and hence $\nf_T = -\nf_{T'}$). Therefore, the last estimate proves
Then, the definition of the facet residual~\eqref{eq:edgeJ} on $\facet$ leads to
\begin{align*}%\label{eq:osc:proof:J}
\begin{split}
 h_T\norm{(1-\Pi_\star)J_\star(w_\star)}{L^2(\facet)}^2
 \lesssim% \max_{T_0\in\TT_0}\norm{\A}{W^{1,\infty}(T_0)}^2
 \,h_T^2\,\norm{\nabla w_\star}{L^2(T)}^2 + h_{T'}^2\,\norm{\nabla w_\star}{L^2(T')}^2.
\end{split}
\end{align*}
Summing this over all interior facets $\facet\in\EE_T\cap\EE_\star^\Omega$ of the elements $T\in\UU_\star$, we obtain~\eqref{eq:osc:step2}.

{\bf Step~3.}
For all $v_\star,v_\star'\in\SS^1_0(\TT_\star)$ and all $\UU_\star\subseteq\TT_\star$, it holds
\begin{align}\label{eq:osc:step3}
 |\osc_\star(\UU_\star,v_\star)-\osc_\star(\UU_\star,v_\star')|
 \le C\,\Big(\sum_{T\in\patch_\star(\UU_\star)}h_{T}^2\norm{\nabla(v_\star-v_\star')}{L^2(T)}^2\Big)^{1/2},
\end{align}
where $C>0$ depends only on $\max_{T_0\in\TT_0}\norm{\A}{W^{1,\infty}(T_0)}$ and $\sigma$-shape regularity of $\TT_\star$: 
The inverse triangle inequality for square-summable sequences in the Banach space $\ell_2$ gives
\begin{align*}
%\begin{split}
 |\osc_\star(\UU_\star,v_\star)-\osc_\star(\UU_\star,v_\star')|
 \le &\Big(\sum_{T\in\UU_\star}h_T^2\,\norm{(1-\Pi_\star)\,\div_\star\A\nabla(v_\star-v_\star') }{L^2(T)}^2\\
 &\qquad+ \sum_{T\in\UU_\star}h_T\,\norm{(1-\Pi_\star)J_\star(v_\star-v_\star')}{L^2(\partial T\backslash\Gamma)}^2\Big)^{1/2}.
\end{align*}
Using~\eqref{eq:osc:step1}--\eqref{eq:osc:step2} for $w_\star:=v_\star-v_\star'$, we obtain~\eqref{eq:osc:step3}.%

{\bf Step~4: Proof of (B1').}
For $v_\star\in\SS^1_0(\TT_\star)$ and $v_\diamond\in\SS^1_0(\TT_\diamond)$, apply~\eqref{eq:osc:step3} with $v_\star':=v_\diamond$ and $\UU_\star:=\TT_\star\cap\TT_\diamond$. Note that $\osc_\diamond(\TT_\star\cap\TT_\diamond,v_\diamond) = \osc_\star(\TT_\star\cap\TT_\diamond,v_\diamond)$.
%\improve{bezieht sich die $L^2$ Projektion in der Definition auf $v_\diamond$
%oder auf $\osc_\star$. zweiteres: passt die Gleichheit sonst?}
With $\norm{\nabla \cdot}{L^2(T)}\simeq\enorm{\cdot}_{T}$ this yields
\begin{align*}
 |\osc_\star(\TT_\star\cap\TT_\diamond,v_\star)-\osc_\diamond(\TT_\star\cap\TT_\diamond,v_\diamond)|
 &\lesssim 
 % \Big(\sum_{T\in\patch_\star(\TT_\star\cap\TT_\diamond)}h_{T}^2\norm{\nabla(v_\star-v_\diamond)}{L^2(T)}^2\Big)^{1/2}
%  %\norm{h_\star}{L^\infty(\Omega)}\,\norm{\nabla(v_\star-v_\diamond)}{L^2(\Omega)}
%  \\&
 % \simeq
\Big(\sum_{T\in\patch_\star(\TT_\star\cap\TT_\diamond)}
h_{T}^2\enorm{v_\star-v_\diamond}^2_{T}\Big)^{1/2}.
\end{align*}%
%Since each $T\in\TT_\diamond$ satisfies $T\subseteq T_0$ for some $T_0\in\TT_0$, 
The hidden constant depends only on $\max_{T_0\in\TT_0}\norm{\A}{W^{1,\infty}(T_0)}$, on $\sigma$-shape regularity of $\TT_\star$, and on the assumptions~\eqref{eq:A}--\eqref{eq3:A} on $\A$. This concludes the proof of~(B1'). 

{\bf Step~5: Proof of (B2').}
For $v_\star\in\SS^1_0(\TT_\star)$ and $v_\diamond\in\SS^1_0(\TT_\diamond)$, we apply~\eqref{eq:osc:step3} with $v_\star':=v_\diamond$ and $\UU_\star:=\TT_\star\backslash\TT_\diamond$. 
With $\norm{\nabla \cdot}{L^2(T)}\simeq\enorm{\cdot}_{T}$ this shows
\begin{align*}
 \osc_\star(\TT_\star\backslash\TT_\diamond,v_\star)
 \le \osc_\star(\TT_\star\backslash\TT_\diamond,v_\diamond)
 + C\,\Big(\sum_{T\in\patch_\star(\TT_\star\backslash\TT_\diamond)}h_{T}^2\enorm{v_\star-v_\diamond}_{T}^2\Big)^{1/2}.
\end{align*}
For all $\delta>0$, the Young inequality $(a+b)^2 \le (1+\delta)\,a^2+(1+\delta^{-1})\,b^2$ proves
\begin{align*}
 \osc_\star(\TT_\star\backslash\TT_\diamond,v_\star)^2
 \le (1+\delta)\,\osc_\star(\TT_\star\backslash\TT_\diamond,v_\diamond)^2
 + (1+\delta^{-1})\,C^2\sum_{T\in\patch_\star(\TT_\star\backslash\TT_\diamond)}h_{T}^2\enorm{v_\star-v_\diamond}_{T}^2.
\end{align*}%
Let $T\in\TT_\diamond\backslash\TT_\star$. Let $\TT_\star|_T:=\set{T'\in\TT_\star}{T'\subsetneqq T}$ 
be the set of its successors in $\TT_\star$. Let $T'\in\TT_\star|_T$. Recall that bisection ensures $|T'|\le|T|/2$. With $0<\widetilde q:=2^{-1/d}<1$, it follows
\begin{align*}
 \osc_\star(T',v_\diamond)^2 
 \le \widetilde q\,\big(h_T^2\norm{(1-\Pi_\diamond)(f+\div_\diamond\A\nabla v_\diamond)}{L^2(T')}^2 
 + h_{T}\,\norm{(1-\Pi_\diamond)\jump{\A\nabla v_\diamond}}{L^2((\partial T'\cap\partial T)\backslash\Gamma)}^2\big),
\end{align*}%
since $\A\nabla v_\diamond$ is smooth inside of $T$ so that all normal jumps inside of $T$ vanish. 
This leads to
\begin{align*}%\label{eq2:B2}
 \osc_\star(\TT_\star\backslash\TT_\diamond,v_\diamond)^2
 &= %\sum_{T'\in\TT_\star\backslash\TT_\diamond}\osc_\star(T',v_\diamond)^2 
 \!\!\!\sum_{T\in\TT_\diamond\backslash\TT_\star}\sum_{T'\in\TT_\star|_T}\osc_\star(T',v_\diamond)^2
 \le\widetilde q\!\!\sum_{T\in\TT_\diamond\backslash\TT_\star}\osc_\diamond(T,v_\diamond)^2
 = \widetilde q\,\osc_\diamond(\TT_\diamond\backslash\TT_\star,v_\diamond)^2.
\end{align*}
Choosing $\delta>0$ such that $0<q:=(1+\delta)\,\widetilde q<1$, we conclude the proof of~(B2').
\end{proof}

%===============================================================================
\subsection{General quasi-orthogonality \& linear convergence}
%===============================================================================
\label{section:A3}%
The following proposition proves~\eqref{eq:mns:linear} in Theorem~\ref{theorem:mns}\rm (i) and shows, in particular, that the general quasi-orthogonality~(A3) is satisfied.

\begin{proposition}\label{prop:mns:linear}
There is a constant $H>0$ such that the following statement is valid provided that  $\norm{h_0}{L^\infty(\Omega)}\le H$:
For all $0<\theta'\le\theta\le1$ Algorithm~\ref{algorithm:mns} guarantees linear convergence 
\rm in the sense of Theorem~\ref{theorem:mns}\rm (i).
% \begin{align}\label{eq:prop:mns:linear}
%  \eta_{\ell+n}^2\le\Clin\qlin^n\,\eta_\ell^2
%  \quad\text{for all }\ell,n\in\N_0.
% \end{align}
Moreover, together with reliability~\eqref{eq:reliable}, estimate~\eqref{eq:mns:linear} also implies the general quasi-ortho\-go\-nality~{\rm(A3)}.
%
% The constants $\Clin$ and $\qlin$ depend only on $\theta$, $\theta'$, $H$, uniform $\sigma$-shape regularity of the triangulations $\TT_\ell$, and the  assumptions~\eqref{eq:A}--\eqref{eq3:A} on $\A$.
\end{proposition}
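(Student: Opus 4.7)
The plan is to establish a one-step contraction of the weighted quantity
\begin{align*}
 \Delta_\ell := \enorm{u-u_\ell}^2 + \gamma\,\eta_\ell^2 + \beta\,\osc_\ell^2
\end{align*}
for suitably chosen weights $\gamma,\beta>0$, and then to deduce linear convergence of $\eta_\ell$ from the equivalence $\Delta_\ell\simeq\eta_\ell^2$ provided by reliability~\eqref{eq:reliable}, efficiency~\eqref{eq:efficient}, and the pointwise bound $\osc_\ell\le\eta_\ell$ from~\eqref{eq:osc-eta}. The quasi-orthogonality~(A3) will then follow as a routine corollary of~\eqref{eq:mns:linear} combined with reliability.

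First I would apply Lemma~\ref{lemma:A1-A2} with $v_\star=u_{\ell+1}$ and $v_\diamond=u_\ell$, invoke the Young inequality, and exploit the D\"orfler criterion~\eqref{eq:doerfler:eta} in the form $\theta\,\eta_\ell^2\le\eta_\ell(\TT_\ell\setminus\TT_{\ell+1})^2$ to obtain an estimator reduction $\eta_{\ell+1}^2\le q_\eta\,\eta_\ell^2 + C_1\,\enorm{u_{\ell+1}-u_\ell}^2$ with some $q_\eta\in(0,1)$ depending only on $\theta$. An entirely parallel argument based on Lemma~\ref{lemma:osc} together with the oscillation D\"orfler marking~\eqref{eq:doerfler:osc} yields the sharper oscillation reduction
\begin{align*}
 \osc_{\ell+1}^2 \le q_o\,\osc_\ell^2 + C_2\,\norm{h_0}{L^\infty(\Omega)}^2\,\enorm{u_{\ell+1}-u_\ell}^2,
\end{align*}
where the extra prefactor $\norm{h_0}{L^\infty(\Omega)}^2$ inherited from the $h_T^2$-weights in~(B1')/(B2') will be decisive for the eventual absorption.

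The hard part is a quasi-Galerkin orthogonality controlling $\enorm{u_{\ell+1}-u_\ell}^2$. Since $u_\ell$ only satisfies the FVM identity~\eqref{eq:disc_systemfvem} and not a Galerkin equation against $\SS^1_0(\TT_\ell)$, one must compare $\AA_\ell(u_\ell,\cdot)$ with $\AA(u_\ell,\cdot)$; using a transfer operator from $\SS^1_0(\TT_\ell)$ to $\PP^0_0(\TT_\ell^*)$ defined via nodal values together with the piecewise Lipschitz assumption~\eqref{eq3:A} on $\A$, this discrepancy is of order $\norm{h_\ell}{L^\infty(\Omega)}$ in the energy norm. This is precisely the mechanism underlying Theorem~\ref{theorem:cea}, and combining it with a Young inequality should produce
\begin{align*}
 \enorm{u-u_{\ell+1}}^2 + \enorm{u_{\ell+1}-u_\ell}^2 \le (1+C_3\,\norm{h_0}{L^\infty(\Omega)})\,\enorm{u-u_\ell}^2 + C_4\,\osc_\ell^2,
\end{align*}
so that $\enorm{u_{\ell+1}-u_\ell}^2$ is bounded by $\enorm{u-u_\ell}^2-\enorm{u-u_{\ell+1}}^2$ up to a multiplicative constant close to $1$ and an additive $\osc_\ell^2$ perturbation, provided $\norm{h_0}{L^\infty(\Omega)}$ is sufficiently small.

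Combining these three ingredients into $\Delta_\ell$ and exploiting the smallness of $\norm{h_0}{L^\infty(\Omega)}$ to absorb all $\enorm{u_{\ell+1}-u_\ell}^2$ contributions into the negative part $-\enorm{u-u_{\ell+1}}^2$ of the quasi-orthogonality estimate, one obtains the desired contraction $\Delta_{\ell+1}\le\rho\,\Delta_\ell$ for some $\rho\in(0,1)$. Iterating gives $\Delta_{\ell+n}\le\rho^n\,\Delta_\ell$, and the equivalence $\Delta_\ell\simeq\eta_\ell^2$ yields~\eqref{eq:mns:linear} with $\qlin:=\rho$ and an explicit $\Clin$. Finally, (A3) follows by using the triangle inequality and reliability to bound $\enorm{u_{k+1}-u_k}^2\le 2\Crel\,(\eta_{k+1}^2+\eta_k^2)$, and then summing the geometric series $\sum_{k\ge\ell}\eta_k^2\le\Clin/(1-\qlin)\cdot\eta_\ell^2$.
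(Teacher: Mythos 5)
Your proposal follows essentially the same route as the paper: the same weighted quantity $\Delta_\ell=\enorm{u-u_\ell}^2+\gamma\,\eta_\ell^2+\mu\,\osc_\ell^2$, the estimator and oscillation reduction via (A1')--(A2') resp.\ (B1')--(B2') with both D\"orfler markings, a quasi-Galerkin orthogonality obtained from the FVM/FEM bilinear-form comparison (the defect bound $|\AA(u-u_\star,v_\star)|\lesssim\enorm{v_\star}\,\osc_\star$ underlying Theorem~\ref{theorem:cea}), absorption of $\enorm{u_{\ell+1}-u_\ell}^2$ for $\norm{h_0}{L^\infty(\Omega)}$ small, and the identical geometric-series argument for (A3). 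The only cosmetic difference is that your quasi-orthogonality carries $\osc_\ell$ instead of the fine-mesh $\osc_{\ell+1}$ appearing naturally in the defect estimate; the two forms are interchangeable via the oscillation reduction step, so this does not affect the argument.
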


Our proof relies on the following quasi-Galerkin orthogonality property from~\cite{xzz06}.

\begin{lemma}
\label{lemma:orth}
Let $\TT_\star\in\refine(\TT_\ell)$. Then, the corresponding discrete solutions
satisfy
\begin{align}\label{eq:galerkin}
 \enorm{u-u_\star}^2
 \le \enorm{u-u_\ell}^2 - (1-\delta)\,\enorm{u_\star-u_\ell}^2
 + \delta^{-1}\c{galerkin}\,\osc_{\star}^2
\end{align}
for all $0<\delta<1$. The constant $\setc{galerkin}>0$ depends only on
$\sigma$-shape regularity of $\TT_{\star}$.
\end{lemma}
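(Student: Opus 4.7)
The plan is to derive the inequality from a Pythagoras-type identity in the energy norm and to control the resulting defect of Galerkin orthogonality by the data oscillations. Since $w:=u_\star-u_\ell\in\SS^1_0(\TT_\star)$, expanding $\enorm{u-u_\ell}^2=\enorm{(u-u_\star)+w}^2$ yields
$$\enorm{u-u_\star}^2 = \enorm{u-u_\ell}^2 - \enorm{u_\star-u_\ell}^2 - 2\AA(u-u_\star,w).$$
For the FEM discretisation, the cross term would vanish by Galerkin orthogonality, but for the FVM it does not. Therefore, the task reduces to the \emph{quasi}-Galerkin bound
$$|\AA(u-u_\star,w)|\le C\,\osc_\star\,\enorm{w}\quad\text{for all }w\in\SS^1_0(\TT_\star).$$
Once this is in place, Young's inequality $2ab\le\delta a^2+\delta^{-1}b^2$ absorbs the $\enorm{w}^2$-contribution into $(1-\delta)\enorm{u_\star-u_\ell}^2$ and produces the claim with $\c{galerkin}=C^2$.

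To establish the quasi-Galerkin bound, I would introduce the canonical lifting $\II^*_\star\colon\SS^1_0(\TT_\star)\to\PP^0_0(\TT^*_\star)$ defined by $(\II^*_\star w)|_{V_i}:=w(a_i)$, and subtract the FVM equation~\eqref{eq:disc_systemfvem} tested with $\II^*_\star w$ from the weak form $\AA(u,w)=(f,w)_\Omega$ to obtain
$$\AA(u-u_\star,w) = (f,w-\II^*_\star w)_\Omega + \bigl(\AA_\star(u_\star,\II^*_\star w)-\AA(u_\star,w)\bigr).$$
Integration by parts on each primal element $T\in\TT_\star$ and reassembly of the flux integrals that define $\AA_\star(u_\star,\II^*_\star w)$ across every facet $\facet\in\EE_\star$ recast the right-hand side as pairings of the volume residual $R_\star(u_\star)$ and the facet residual $J_\star(u_\star)$ against the weight $w-\II^*_\star w$. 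The crucial feature, ensured by the symmetric construction of the dual mesh in Section~\ref{subsec:dualmesh}, is that $w-\II^*_\star w$ has vanishing mean on every $T\in\TT_\star$ and on every $\facet\in\EE_\star$, so that orthogonality of $\Pi_\star$ permits replacing the residuals by $(1-\Pi_\star)R_\star(u_\star)$ and $(1-\Pi_\star)J_\star(u_\star)$ without changing the pairings.

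The main obstacle lies precisely in this bookkeeping step: one has to verify the mean-value property of $w-\II^*_\star w$ both for $d=2$ and for $d=3$ (where $\facet$ is a triangle split by the dual mesh into three pieces, each carrying one of the three nodal values), and show that the apparently leftover flux contributions coming from box faces interior to a single primal element cancel or are controlled by the oscillations. Once this is done, standard scaling arguments under $\sigma$-shape regularity supply $\|w-\II^*_\star w\|_{L^2(T)}\lesssim h_T\|\nabla w\|_{L^2(T)}$ and $\|w-\II^*_\star w\|_{L^2(\facet)}\lesssim h_T^{1/2}\|\nabla w\|_{L^2(T)}$, whence Cauchy--Schwarz combined with the definition of $\osc_\star$ gives $|\AA(u-u_\star,w)|\lesssim\osc_\star\,\enorm{w}$; Young's inequality then yields the claimed estimate.
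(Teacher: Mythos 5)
Your proposal is correct and follows essentially the same route as the paper: the Pythagoras-type expansion of $\enorm{u-u_\ell}^2$, the quasi-Galerkin bound $|\AA(u-u_\star,w)|\le C\,\osc_\star\,\enorm{w}$, and Young's inequality with $\c{galerkin}=C^2$. The only difference is that the paper simply cites \cite[Theorem~5.1]{xzz06} for the quasi-Galerkin bound, whereas you sketch its derivation via $\II^*_\star$, the mean-value orthogonalities~\eqref{eq1:proplindual}, and the scaling estimates~\eqref{eq3:proplindual}--\eqref{eq4:proplindual} --- which is precisely the argument the paper itself carries out in Steps~1--2 of the proof of Theorem~\ref{theorem:cea}.
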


\begin{proof}
% From the symmetry of the bilinear form $\AA(\cdot,\cdot)$, we infer
% \begin{align*}
%  \enorm{u-u_\ell}^2
% % &=
% % \enorm{(u-u_\star)+(u_\star-u_\ell)}^2
% % }
% % \\
%  &=  \enorm{u-u_\star}^2 +2\,\AA(u-u_\star,u_\star-u_\ell) + \enorm{u_\star-u_\ell}^2
% \end{align*}%
According to~\cite[Theorem~5.1]{xzz06}, it holds
\begin{align*}
 |\AA(u-u_\star,v_\star)|
 \le C\,\enorm{v_\star}\,\osc_{\star}
 \quad\text{for all }v_\star\in\SS^1_0(\TT_\star),
\end{align*}
where $C>0$ depends only on $\sigma$-shape regularity of $\TT_{\star}$.
For each $\delta>0$, the symmetry of $\AA(\cdot,\cdot)$, the last estimate, 
and the Young inequality $2ab\le \delta a^2+\delta^{-1}b^2$ yield
\begin{align*}
\enorm{u-u_\star}^2 
&= \enorm{u-u_\ell}^2 - 2\,\AA(u-u_\star,u_\star-u_\ell) - \enorm{u_\star-u_\ell}^2
% \\&
% \le  \enorm{u-u_\ell}^2 +2C\enorm{u_\star-u_\ell}\,\osc_{\star} - \enorm{u_\star-u_\ell}^2
 \\&\le\enorm{u-u_\ell}^2 -(1-\delta)\, \enorm{u_\star-u_\ell}^2 + C^2\delta^{-1}\,\osc_{\star}^2.
\end{align*}%
This concludes the proof with $\c{galerkin}=C^2$.
\end{proof}

\begin{proof}[\bfseries Proof of Proposition~\ref{prop:mns:linear}]
{\bf Step 1.}
There exist constants $\Cest>0$ and $0<\qest<1$ which depend only on $0<\theta\le1$ and the constants in~(A1)--(A2), such that
\begin{align}\label{eq:mns:step1}
 \eta_{\ell+1}^2 \le \qest\,\eta_\ell^2 + \Cest\,\enorm{u_{\ell+1}-u_\ell}^2
 \quad\text{for all }\ell\in\N_0:
\end{align}
% First, we observe that (A2) yields
% \begin{align*}
%  \eta_{\ell+1}(\TT_{\ell+1}\backslash\TT_\ell)^2
%  \le q\,\eta_\ell(\TT_{\ell}\backslash\TT_{\ell+1})^2
%  + C\,\enorm{u_{\ell+1}-u_\ell}^2.
% \end{align*}
% Second, we observe that (A1) together with the Young inequality yields, for all $\eps>0$,
% \begin{align*}
%  \eta_{\ell+1}(\TT_{\ell+1}\cap\TT_\ell)^2
%  \le(1+\eps)\,\eta_\ell(\TT_{\ell}\cap\TT_{\ell+1})^2
%  + (1+\eps^{-1})C^2\,\enorm{u_{\ell+1}-u_\ell}^2.
% \end{align*}
% Adding these estimates, we see with $\Cest := C+(1+\eps^{-1})C^2$ that
The combination of~(A1)--(A2) yields for all $\eps>0$ that 
\begin{align*}
 \eta_{\ell+1}^2
% &= 
% \eta_{\ell+1}(\TT_{\ell+1}\backslash\TT_\ell)^2
% + \eta_{\ell+1}(\TT_{\ell+1}\cap\TT_\ell)^2
% \\&
 \le q\,\eta_\ell(\TT_{\ell}\backslash\TT_{\ell+1})^2
 + (1+\eps)\,\eta_\ell(\TT_{\ell}\cap\TT_{\ell+1})^2
 + (C+(1+\eps^{-1})C^2)\,\enorm{u_{\ell+1}-u_\ell}^2.
\end{align*}%
%From $\MM_\ell^\eta\subseteq\MM_\ell\subseteq\TT_{\ell}\backslash\TT_{\ell+1}$, we infer 
Note that $\eta(\MM_\ell^\eta)\le\eta_\ell(\TT_{\ell}\backslash\TT_{\ell+1})$. Therefore, the D\"orfler marking~\eqref{eq:doerfler:eta} yields
\begin{eqnarray*}
 q\,\eta_\ell(\TT_{\ell}\backslash\TT_{\ell+1})^2
 + (1+\eps)\,\eta_\ell(\TT_{\ell}\cap\TT_{\ell+1})^2
 &=& (1+\eps)\,\eta_\ell^2-(1+\eps-q)\,\eta_\ell(\TT_{\ell}\backslash\TT_{\ell+1})^2
 \\
 &\stackrel{\eqref{eq:doerfler:eta}}\le&
 (1+\eps-\theta\,(1+\eps-q))\,\eta_\ell^2.
\end{eqnarray*}
For sufficiently small $\eps>0$, we see $0<\qest:=1+\eps-\theta\,(1+\eps-q)<1$ 
and conclude~\eqref{eq:mns:step1}.

{\bf Step 2.} 
There exist constants $\Cest>0$ and $0<\qest<1$ which depend only on $0<\theta'\le1$ and the constants in~(B1')--(B2'), such that
\begin{align}\label{eq:mns:step2}
 \osc_{\ell+1}^2 \le \qest\,\osc_\ell^2 + \Cest\,\norm{h_{\ell+1}}{L^\infty(\Omega)}^2\enorm{u_{\ell+1}-u_\ell}^2
 \quad\text{for all }\ell\in\N_0:
\end{align}
The proof follows verbatim to that of~\eqref{eq:mns:step1}, but now involves (B1')--(B2')
in combination with the D\"orfler marking~\eqref{eq:doerfler:osc} for the data oscillations.

{\bf Step 3.}
Without loss of generality, we may assume that the constants $\Cest>0$ and $0<\qest<1$ in~\eqref{eq:mns:step1}--\eqref{eq:mns:step2} are the same.
With free parameters $\gamma,\mu>0$ which are fixed later, 
we define 
\begin{align*}
 \Delta_\star := \enorm{u-u_{\star}}^2 + \gamma\,\eta_\star^2 + \mu\,\osc_\star^2.
\end{align*}
We claim that there are constants $\gamma,\mu,C>0$ and $0<\qlin<1$ such that
\begin{align}\label{eq:mns:step3}
 \Delta_{\ell+1} \le \qlin\Delta_\ell 
 - \big(1/4-C\,\norm{h_{\ell+1}}{L^\infty(\Omega)}^2\big)\,\enorm{u_{\ell+1}-u_\ell}^2,
\end{align}
where $\gamma,\mu,C,\qlin$ depend only on $\theta$, $\theta'$,
 uniform $\sigma$-shape regularity of the triangulations $\TT_\star\in\refine(\TT_0)$, and the assumptions~\eqref{eq:A}--\eqref{eq3:A} on $\A$: To prove this, we use the quasi-Galerkin orthogonality~\eqref{eq:galerkin} with $\delta=1/2$ and the estimates~\eqref{eq:mns:step1}--\eqref{eq:mns:step2} to see
\begin{eqnarray*}
 \Delta_{\ell+1}
 &\stackrel{\eqref{eq:galerkin}}\le&
 \enorm{u-u_\ell}^2
 + \gamma\,\eta_{\ell+1}^2 + (\mu+2\c{galerkin})\,\osc_{\ell+1}^2
 - (1/2)\,\enorm{u_{\ell+1}-u_\ell}^2
 \\
&\le&
 \enorm{u-u_\ell}^2 
 + \qest\,\gamma\,\eta_\ell^2 
 + \qest\,\frac{\mu+2\c{galerkin}}{\mu}\,\mu\,\osc_{\ell}^2
\\&& - \big(1/2-\gamma\Cest-(\mu+2\c{galerkin})\Cest\,\norm{h_{\ell+1}}{L^\infty(\Omega)}^2\big)\,\enorm{u_{\ell+1}-u_\ell}^2
\end{eqnarray*}
For all $\eps>0$, reliability~\eqref{eq:reliable} implies
\begin{align*}
 \enorm{u-u_\ell}^2 + \qest\,\gamma\,\eta_\ell^2
 \le (1-\eps)\,\enorm{u-u_\ell}^2 + (\qest + \gamma^{-1}\Crel\eps)\,\gamma\,\eta_\ell^2.
\end{align*}
We choose $\gamma>0$ sufficiently small such that $\gamma\Cest \le 1/4$.
Additionally, we choose $\eps>0$ sufficiently small and $\mu>0$ sufficiently large such that
\begin{align*}
 0 < q_1 := \qest + \gamma^{-1}\Crel\eps < 1
 \quad\text{and}\quad
 0 < q_2 := \qest\,\frac{\mu+2\c{galerkin}}{\mu} < 1.
\end{align*}%
Combining the latter estimates with $C:=(\mu+2\c{galerkin})\Cest>0$, we arrive at
\begin{align*}
 \Delta_{\ell+1}
 &\le (1-\eps)\,\enorm{u-u_\ell}^2 + q_1\,\gamma\,\eta_\ell^2
 + q_2\,\mu\,\osc_\ell^2 - (1/4 - C\,\norm{h_{\ell+1}}{L^\infty(\Omega)}^2\big)\,\enorm{u_{\ell+1}-u_\ell}^2
 \\&
 \le \qlin\,\Delta_\ell - (1/4 - C\,\norm{h_{\ell+1}}{L^\infty(\Omega)}^2\big)\,\enorm{u_{\ell+1}-u_\ell}^2,
\end{align*}
where $0 < \qlin := \max\{1-\eps,q_1,q_2\} < 1$. This concludes the proof of~\eqref{eq:mns:step3}.

{\bf Step~4.} 
Recall that $\norm{h_{\ell+1}}{L^\infty(\Omega)} \le \norm{h_0}{L^\infty(\Omega)}$. If $C\,\norm{h_0}{L^\infty(\Omega)}\le1/4$, estimate~\eqref{eq:mns:step3} proves
\begin{align*}
 \Delta_{\ell+1} 
 \le \qlin\,\Delta_\ell - \big(1/4-C\,\norm{h_{\ell+1}}{L^\infty(\Omega)}^2\big)\,\enorm{u_{\ell+1}-u_\ell}^2
 \le \qlin\,\Delta_\ell
 \quad\text{for all }\ell\in\N_0.
\end{align*}
With reliability~\eqref{eq:reliable} and $\osc_\ell^2 \le \eta_\ell^2$ from~\eqref{eq:osc-eta}, induction on $n$ proves
\begin{align*}
 \gamma\,\eta_{\ell+n}^2 \le \Delta_{\ell+n} \le \qlin^n\,\Delta_\ell
 \le \qlin^n(\Crel + \gamma + \mu)\,\eta_\ell^2
 \quad\text{for all }\ell,n\in\N_0.
\end{align*}
This proves linear convergence~\eqref{eq:mns:linear} with $\Clin = (\Crel + \gamma + \mu)\gamma^{-1}$.

{\bf Step~5.} Together with the triangle inequality 
$\enorm{u_{k+1}-u_k}^2 \le 2\enorm{u-u_{k+1}}^2 + 2\enorm{u-u_k}^2$, reliability~\eqref{eq:reliable}, and linear convergence~\eqref{eq:mns:linear}, the geometric series yields
\begin{align*}
 \sum_{k=\ell}^N \enorm{u_{k+1}-u_k}^2 
 \le 4\,
 \sum_{k=\ell}^{N+1} \enorm{u-u_k}^2 
 \stackrel{\eqref{eq:reliable}}\lesssim \sum_{k=\ell}^{N+1}\eta_k^2
 \le \sum_{j=0}^\infty\eta_{j+\ell}^2
 \stackrel{\eqref{eq:mns:linear}}\lesssim \eta_\ell^2 \sum_{j=0}^\infty \qlin^j
 \lesssim \eta_\ell^2.
\end{align*}
This concludes the validity of the general quasi-orthogonality~(A3).
\end{proof}

%===============================================================================
\subsection{Auxiliary results}
\label{subsec:auxialiary}
%===============================================================================
For the convenience of the reader, this section collects some well-known properties of the FVM which are exploited in the subsequent proofs.

\begin{lemma}
  \label{lem:proplindual}
  With $\chi_i^*\in\PP^0(\TT_\star^*)$ being the characteristic function of $V_i\in\TT_\star^*$, we define the interpolation operator
 \begin{align*}% \label{eq:intoppiecewise}
  \II_\star^*:\CC(\overline\Omega)\to\PP^0(\TT^*_{\star}),\quad
  \II_\star^*v:=\sum_{a_i\in\NN_\star}v(a_i)\chi_i^*.
\end{align*}
Then, for all $T\in\TT_\star$, $\facet\in\EEt$, and $v_\star\in\SS^1(\TT_\star)$, it holds 
  \begin{align}
    \label{eq1:proplindual}
    &\int_T (v_\star-\II_\star^*v_\star)\,dx=0=\int_{\facet}(v_\star-\II_\star^*v_\star)\,ds=0,\\
    \label{eq3:proplindual}
    &\norm{v_\star-\II_\star^*v_\star}{L^2(T)}\leq h_T \norm{\nabla v_\star}{L^2(T)},\\
    \label{eq4:proplindual}
    &\norm{v_\star-\II_\star^*v_\star}{L^2(\facet)}\leq C h_T^{1/2}\norm{\nabla v_\star}{L^2(T)}.
  \end{align}
In particular, it holds $\II_\star^*v_\star \in \PP^0_0(\TT_\star^*)$ for all $v_\star\in\SS^1_0(\TT_\star)$.
The constant $C>0$ depends only on $\sigma$-shape regularity of $\TT_\star$.
\end{lemma}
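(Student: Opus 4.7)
The plan is to exploit the key geometric feature of the dual mesh $\TT_\star^*$: by its construction in Section~\ref{subsec:dualmesh} (via centroids of simplices, centroids of facets, and in 3D also the edge midpoints), each simplex $T\in\TT_\star$ is partitioned by the boxes $V_i$ into $d{+}1$ subregions $V_i\cap T$ of equal volume $|T|/(d{+}1)$, and each facet $\facet\in\EEt$ is partitioned into $d$ subregions $V_i\cap\facet$ of equal $(d{-}1)$-measure $|\facet|/d$, one piece per vertex of $T$ (resp.\ of $\facet$). This equal-measure property drives the identities~\eqref{eq1:proplindual}: since $v_\star$ is affine on $T$, its integral equals $|T|$ times its value at the centroid, which in turn equals $(d{+}1)^{-1}\sum_i v_\star(a_i)$; by the equal-measure partition, this is exactly $\int_T \II_\star^* v_\star\,dx$. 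The facet identity is proved analogously, with $d$ replacing $d{+}1$.

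For the volume bound~\eqref{eq3:proplindual}, I would argue on each subregion $V_i\cap T$ separately. There, $v_\star - \II_\star^* v_\star$ coincides with $v_\star - v_\star(a_i)$, and since $v_\star$ is affine on $T$ one has the pointwise identity $v_\star(x)-v_\star(a_i)=\nabla v_\star\cdot (x-a_i)$, hence $|v_\star(x)-v_\star(a_i)|\le |\nabla v_\star|\,\diam(T)\lesssim h_T\,|\nabla v_\star|$ by $\sigma$-shape regularity. Squaring, integrating over $V_i\cap T$, summing over $i$, and using that $\nabla v_\star$ is constant on $T$ so that $|\nabla v_\star|^2\,|T|=\norm{\nabla v_\star}{L^2(T)}^2$ yields~\eqref{eq3:proplindual}. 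The facet bound~\eqref{eq4:proplindual} follows from the same pointwise estimate applied on $V_i\cap\facet$, combined with $|\facet|\lesssim h_T^{d-1}$ and $|T|\simeq h_T^d$ from shape regularity; this shifts the power of $h_T$ by $1/2$.

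The boundary preservation $\II_\star^* v_\star\in\PP^0_0(\TT_\star^*)$ for $v_\star\in\SS^1_0(\TT_\star)$ is immediate: every boundary node $a_i\in\NN_\star^\Gamma$ satisfies $v_\star(a_i)=0$, so $\II_\star^* v_\star|_{V_i}=0$ on every box $V_i$ associated with a boundary node; since $\Gamma$ is covered by precisely those boxes, $\II_\star^* v_\star$ vanishes on $\Gamma$.

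The only step that is not routine is the equal-measure partition claim itself. In 2D it reduces to the classical fact that the three medians of $T$ divide it into six triangles of equal area $|T|/6$, and each subregion $V_i\cap T$ is the union of two of them (sharing the median from $a_i$). In 3D, $V_i\cap T$ is the cuboid-like region shown in Figure~\ref{fig:dual3d}, built from the centroid of $T$, the centroids of the three incident faces, the midpoints of the three incident edges, and the vertex $a_i$; I would confirm $|V_i\cap T|=|T|/4$ either by explicit computation on the reference tetrahedron (using affine invariance of volume ratios) or by the symmetry argument that the four subregions $V_j\cap T$ are related by permutations of the vertices. Once this geometric fact is in hand, the remaining estimates reduce to standard manipulations for affine functions on shape regular simplices.
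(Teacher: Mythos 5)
Your proof is correct, and it supplies in full the details that the paper only gestures at: the published argument for \eqref{eq1:proplindual} is a one-line appeal to ``the construction of $\TT_\star^*$'', the bound \eqref{eq3:proplindual} is delegated to a cited thesis, and \eqref{eq4:proplindual} is obtained from \eqref{eq3:proplindual} via a (necessarily piecewise) trace inequality. You instead make the geometric heart of the matter explicit -- the equal-measure partition $|V_i\cap T|=|T|/(d{+}1)$ and $|V_i\cap \facet|=|\facet|/d$, correctly justified in 2D by the six median triangles and in 3D on the reference tetrahedron using affine invariance of volume ratios -- and then run the elementary pointwise estimate $|v_\star(x)-v_\star(a_i)|\le|\nabla v_\star|\,\diam(T)$ on each piece. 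Your direct estimate of \eqref{eq4:proplindual} on the facet pieces $V_i\cap\facet$ is arguably cleaner than the paper's trace-inequality route, since $v_\star-\II_\star^*v_\star$ jumps across the dual interfaces inside $T$ and the trace inequality must be applied subregion by subregion anyway. One small caveat: because the paper normalizes $h_T=|T|^{1/d}\le\diam(T)$, your argument delivers \eqref{eq3:proplindual} with a constant $\sigma^{1/d}$ (from $\diam(T)\le\sigma^{1/d}h_T$) rather than with the constant $1$ displayed in the lemma; this is immaterial for every subsequent use of the lemma, but if you want the statement verbatim you should either phrase the bound in terms of $\diam(T)$ or sharpen the pointwise estimate to $|x-a_i|\le\frac{d}{d+1}\diam(T)$ on $V_i\cap T$ and track the constant.
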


\begin{proof}
The proof of~\eqref{eq1:proplindual} is 
based on the construction of $\TT^*_\star$ and exploits that $v_\star$ 
is a piecewise linear function on $\TT_\star$. 
A proof for~\eqref{eq3:proplindual} is found in~\cite[Lemma 1.4.2]{Erath:2010-phd},
and~\eqref{eq4:proplindual} follows from~\eqref{eq3:proplindual} and the trace inequality.
\end{proof}

The following lemma is a key observation.
For discrete ansatz and test spaces, it allows to understand the FVM bilinear 
form as a perturbation of the bilinear form of the weak 
formulation. 
The proof is given 
in~\cite{Ewing:2002-1,Chatzipantelidis:2002-1,Erath:2012-1} 
for Lipschitz-continuous $\A$, but transfers directly to the present situation, where $\A$ satisfies~\eqref{eq:A}--\eqref{eq3:A}. 
%See also the proof of~\eqref{eq:osc:step2}, where we had a similar estimate on $\A$.
%\improve{Auch diese Aussage fuer unser $\A$ haengt davon ab, ob Poincare fuer $W^{1,\infty}$ gilt!!!}  

\begin{lemma}[{{\cite{Ewing:2002-1,Chatzipantelidis:2002-1,Erath:2012-1}}}] 
 \label{lem:bilinearcomp}
 It holds
 \begin{align}
  \label{eq:bilinearcomp}
  |\AA(v_\star,w_\star)&-\AA_\star(v_\star,\II_\star^*w_\star)|
    \leq \c{bil}\sum_{T\in\TT}h_T\,\enorm{v_\star}_T\enorm{w_\star}_T
    \text{ for all }v_\star,w_\star\in\SS^1_0(\TT_\star).
 \end{align} 
 Moreover, there exists some constant $H>0$ such that 
 $\c{bil}\norm{h_\star}{L^\infty(\Omega)}\le H$ implies
 \begin{align}\label{eq:stable}
  \AA_\star(v_\star,\II_\star^*v_\star)
  \ge C_{\rm stab}\,\enorm{v_\star}^2
  \text{ for all }v_\star\in\SS^1_0(\TT_\star).
 \end{align}
 In particular, this proves that the FVM system~\eqref{eq:disc_systemfvem} has a unique solution $u_\star\in\SS^1_0(\TT_\star)$.
 While $H>0$ depends only on the assumptions~\eqref{eq:A}--\eqref{eq3:A} on $\A$, 
 the constants $\c{bil}$ and $C_{\rm stab}$ depend additionally on $\sigma$-shape regularity of $\TT_\star$.\qed
\end{lemma}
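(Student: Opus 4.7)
The plan is to deduce both~\eqref{eq:bilinearcomp} and~\eqref{eq:stable} from an elementwise comparison of the two bilinear forms, treating the variable coefficient $\A$ as a smooth perturbation of its piecewise-constant average; the stability bound then follows from the ellipticity of $\AA$ by a standard perturbation argument.

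For~\eqref{eq:bilinearcomp} I would first regroup both bilinear forms over the primal elements. Since $\II_\star^*w_\star|_{V_i}=w_\star(a_i)$ and $w_\star$ vanishes on $\Gamma$, the FVM form reads
\begin{align*}
\AA_\star(v_\star,\II_\star^*w_\star)
= -\sum_{T\in\TT_\star}\sum_{a_i\in\NN_\star\cap T} w_\star(a_i)\int_{\partial V_i\cap T}\A\nabla v_\star\cdot\nf\,ds,
\end{align*}
while $\AA(v_\star,w_\star)=\sum_{T}\int_T\A\nabla v_\star\cdot\nabla w_\star\,dx$. The heart of the argument is a purely geometric identity: \emph{if $\A$ is constant on $T$}, then these two contributions from $T$ coincide exactly. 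This is the classical ``FVM\,$=$\,FEM on piecewise linears with constant coefficients'' identity, proved by applying the divergence theorem to $\A\nabla v_\star$ on the subcells $V_i\cap T$ and using that $w_\star|_T=\sum_{a_i\in T} w_\star(a_i)\lambda_i$ with $\lambda_i$ the nodal basis functions; in 2D it is elementary from the barycenter/midpoint construction of the dual mesh, whereas in 3D it requires a careful computation on the cuboids $V_i\cap T$ (Figure~\ref{fig:dual3d}).

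To handle general $\A$ satisfying~\eqref{eq:A}--\eqref{eq3:A}, I would split $\A=\overline\A_T+(\A-\overline\A_T)$ on each $T$ with $\overline\A_T:=|T|^{-1}\int_T\A\,dx$. The $\overline\A_T$-parts cancel by the constant-coefficient identity. For the remainder, the Poincar\'e/Lipschitz bound $\|\A-\overline\A_T\|_{L^\infty(T)}\lesssim h_T\|\A\|_{W^{1,\infty}(T)}$, combined with a trace/scaling argument on $\partial V_i\cap T$ and Cauchy--Schwarz, produces the factor $h_T$ in~\eqref{eq:bilinearcomp}. Summing over $T$ and using that every $T\in\TT_\star$ is contained in some $T_0\in\TT_0$ with $\|\A\|_{W^{1,\infty}(T_0)}<\infty$ by~\eqref{eq3:A} yields the claim, with $C_{\rm bil}$ depending only on $\sigma$-shape regularity and on the elementwise Lipschitz norms of $\A$ on $\TT_0$.

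Estimate~\eqref{eq:stable} then drops out by perturbation. Since $\enorm{v_\star}^2=\AA(v_\star,v_\star)$ and $\sum_T h_T\enorm{v_\star}_T^2\le\|h_\star\|_{L^\infty(\Omega)}\enorm{v_\star}^2$, the first part of the lemma with $w_\star=v_\star$ gives
\begin{align*}
\AA_\star(v_\star,\II_\star^*v_\star)
\ge \enorm{v_\star}^2 - C_{\rm bil}\,\|h_\star\|_{L^\infty(\Omega)}\,\enorm{v_\star}^2.
\end{align*}
Choosing $H$ with $C_{\rm bil}H\le 1/2$ yields coercivity with $C_{\rm stab}=1/2$, and unique solvability of~\eqref{eq:disc_systemfvem} is immediate because $\SS^1_0(\TT_\star)$ is finite-dimensional so that coercivity implies bijectivity. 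The principal obstacle is establishing the constant-coefficient elementwise identity in 3D: once that geometric lemma is in place, the Lipschitz-perturbation step and the coercivity deduction are essentially routine.
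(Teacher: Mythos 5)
Your overall strategy is the right one and is essentially the argument of the references \cite{Ewing:2002-1,Chatzipantelidis:2002-1,Erath:2012-1} that the paper itself cites in lieu of a proof (the paper only remarks that the Lipschitz-coefficient proof ``transfers directly'' to piecewise-Lipschitz $\A$): regroup both forms elementwise, use the exact identity $\AA_\star(v_\star,\II_\star^*w_\star)=\AA(v_\star,w_\star)$ for elementwise-constant coefficients (which indeed holds for the barycentric dual mesh in both 2D and 3D, since $|F\cap V_i|=|F|/d$ for each vertex $a_i$ of a facet $F$ and $\nabla\lambda_i=-\tfrac{|F_i|}{d|T|}\nf_{F_i}$), and then treat $\A-\overline\A_T$ as an $\OO(h_T)$ perturbation. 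The deduction of \eqref{eq:stable} from \eqref{eq:bilinearcomp} with $w_\star=v_\star$ and $\sum_Th_T\enorm{v_\star}_T^2\le\norm{h_\star}{L^\infty(\Omega)}\enorm{v_\star}^2$, and of unique solvability from coercivity of the square system, is correct.

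There is, however, one step that would fail as you describe it. For the remainder term
$\sum_{a_i\in\NN_\star\cap T}w_\star(a_i)\int_{\partial V_i\cap T}(\A-\overline\A_T)\nabla v_\star\cdot\nf\,ds$,
a trace/scaling argument plus Cauchy--Schwarz alone gives only
$\lesssim\norm{\A}{W^{1,\infty}(T)}\,\norm{\nabla v_\star}{L^2(T)}\,\norm{w_\star}{L^2(T)}$
per element (each factor $|w_\star(a_i)|\lesssim h_T^{-d/2}\norm{w_\star}{L^2(T)}$ eats exactly the gained power of $h_T$), which is neither $\OO(h_T)$ nor controlled by $\enorm{w_\star}_T$. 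The missing ingredient is a cancellation: every dual facet in the interior of $T$ separates two boxes and is traversed twice with opposite normals, so $\sum_{a_i\in\NN_\star\cap T}\int_{\partial V_i\cap T}\boldsymbol{g}\cdot\nf\,ds=0$ for any fixed field $\boldsymbol{g}$. This lets you replace $w_\star(a_i)$ by $w_\star(a_i)-c$ for a suitable constant $c$ (e.g., a nodal value or the mean of $w_\star$ on $T$), and only then does $|w_\star(a_i)-c|\lesssim h_T^{1-d/2}\norm{\nabla w_\star}{L^2(T)}$ produce the claimed bound $h_T\,\enorm{v_\star}_T\enorm{w_\star}_T$. (Equivalently, one can argue via the moment conditions of Lemma~\ref{lem:proplindual}.) With that observation inserted, your proof is complete.
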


%===============================================================================
\subsection{Proof of Theorem~\ref{theorem:cea}}
%===============================================================================
\label{section:cea}%
The proof is split in several steps: 

\noindent
{\bf Step~1.}
For arbitrary $v_\star,w_\star\in \SS^1_0(\TT_\star)$ and $w_\star^*:=\II_\star^*w_\star$, we prove
the identity 
\begin{align}\label{eq:cea:step1}
 \hspace*{-1mm}\AA(v_\star,w_\star) \!-\! \AA_\star(v_\star,w_\star^*)
 = \!\!\sum_{T\in\TT_\star}\!\!\Big(\product{\div_\star\A\nabla v_\star}{w_\star^*\!-\!w_\star}_{T}
 - \product{\A\nabla v_\star\cdot\nf}{w_\star^*\!-\!w_\star}_{\partial T\backslash\Gamma}
 \Big):
\end{align}	
First, elementwise integration by parts for the bilinear form $\AA(v_\star,w_\star)$ leads to
\begin{align*}
% \label{eq:ceahelp0}	 
\AA(v_\star,w_\star)
=\sum_{T\in\TT_\star}\product{\A\nabla v_\star}{\nabla w_\star}_T
=\sum_{T\in\TT_\star}\Big(-\product{\div_\star\A\nabla v_\star}{w_\star}_{T}
+\product{\A\nabla v_\star\cdot\nf}{w_\star}_{\partial T\backslash\Gamma} \Big),
\end{align*}  
since $w_\star|_\Gamma=0$.
Second, we rewrite the FVM bilinear form $\AA_\star(v_\star,w_\star^*)$.
Note that $w_\star^*$ does not jump across facets $\facet\in\EE_\star$.
Therefore,
\begin{align*}
% \label{eq:ceahelp1}
\begin{split}
\AA_\star(v_\star,w_\star^*)
= \sum_{a_i\in\NN_\star^\Omega}w_\star^*|_{V_i}\int_{\partial V_i}(-\A\nabla v_\star)\cdot\nf\,ds
= -\sum_{T\in\TT_\star}\sum_{a_i\in\NN_T\backslash\Gamma}
\product{\A\nabla v_\star\cdot\nf}{w_\star^*}_{T\cap\partial V_i}.
\end{split}
\end{align*}
% Note that $\NN_T\backslash\Gamma$ can be replaced by $\NN_T$, since $w_\star^*|_\Gamma=0$.
% For fixed $T\in\TT_\star$, integration by parts thus yields
% \begin{align*}
% %\label{eq:ceahelp2}
% \sum_{a_i\in\NN_T}%&
% \product{\A\nabla v_\star\cdot\nf}{w_\star^*}_{T\cap\partial V_i}%\nonumber\\
% &=\sum_{a_i\in\NN_T}
% \product{\A\nabla v_\star\cdot\nf}{w_\star^*}_{\partial(T\cap V_i)}
% -\product{\A\nabla v_\star\cdot\nf}{w_\star^*}_{\partial T\backslash\Gamma}\nonumber\\
% &=\product{\div_\star\A\nabla v_\star}{w_\star^*}_{T}
% -\product{\A\nabla v_\star\cdot\nf}{w_\star^*}_{\partial T\backslash\Gamma}.
% \end{align*}
% Together with~\eqref{eq:ceahelp1}, this yields
% \begin{align}
% \begin{split}
%  \label{eq:ceahelp3}
% \AA_\star(v_\star,w_\star^*)
% =\sum_{T\in\TT_\star}\Big(&-\product{\div_\star\A\nabla v_\star}{w_\star^*}_{T}
% +\product{\A\nabla v_\star\cdot\nf}{w_\star^*}_{\partial T\backslash\Gamma}\Big).
% \end{split}
% \end{align}
% The difference of~\eqref{eq:ceahelp0} and~\eqref{eq:ceahelp3} proves~\eqref{eq:cea:step1}.
Note that $\NN_T\backslash\Gamma$ can be replaced by $\NN_T$, since $w_\star^*|_\Gamma=0$.
Integration by parts thus yields
\begin{align*}
%\label{eq:ceahelp2}
\AA_\star(v_\star,w_\star^*)
&=-\sum_{T\in\TT_\star}\Big(\sum_{a_i\in\NN_T}
\product{\A\nabla v_\star\cdot\nf}{w_\star^*}_{\partial(T\cap V_i)}
-\product{\A\nabla v_\star\cdot\nf}{w_\star^*}_{\partial T\backslash\Gamma}\Big)\nonumber\\
&=\sum_{T\in\TT_\star}\Big(-\product{\div_\star\A\nabla v_\star}{w_\star^*}_{T}
+\product{\A\nabla v_\star\cdot\nf}{w_\star^*}_{\partial T\backslash\Gamma}\Big).
\end{align*}
The difference of the above estimates prove~\eqref{eq:cea:step1}.

{\bf Step~2.}
For arbitrary $v_\star,w_\star\in \SS^1_0(\TT_\star)$ and $w_\star^*:=\II_\star^*w_\star$, it holds
\begin{align}\label{eq:cea:step2}
 (f,w_\star^*-w_\star) - \big(\AA_\star(v_\star,w_\star^*) - \AA(v_\star,w_\star)\big)
 \le C\,\osc_\star(v_\star)\,\norm{\nabla w_\star}{L^2(\Omega)},
\end{align}
where $C>0$ depends only on $\sigma$-shape regularity of $\TT_\star$: 
% With~\eqref{eq:cea:step1}, we see
% \begin{align*}
%  &(f,w_\star^*-w_\star)_\Omega - \big(\AA_\star(v_\star,w_\star^*) - \AA(v_\star,w_\star)\big)\\
% % &
% % \qquad=\sum_{T\in\TT_\star}\product{f+\div_\star\A\nabla v_\star}{w_\star^*-w_\star}_{T}
% % -\sum_{T\in\TT_\star}\sum_{\facet\in\EEt}\product{\A\nabla v_\star\cdot\nf}
% % {w_\star^*-w_\star}_{\facet\backslash\Gamma}
% % \\
%  &\qquad=\sum_{T\in\TT_\star}\product{f+\div_\star\A\nabla v_\star}{w_\star^*-w_\star}_{T}
%  - \sum_{\facet\in\EE_\star^\Omega}\product{J_\star(v_\star)}{w_\star^*-w_\star}_{\facet}.
% \\
% \intertext{Together with the $L^2$-orthogonalities~\eqref{eq1:proplindual}, we further obtain}
%  &\qquad=\sum_{T\in\TT_\star}\product{(1-\Pi_\star)(f+\div_\star\A\nabla v_\star)}{w_\star^*-w_\star}_{T}
%  -\sum_{\facet\in\EE_\star^\Omega}\product{(1-\Pi_\star)J_\star(v_\star)}
%  {w_\star^*-w_\star}_{\facet}.
% \end{align*}%
With~\eqref{eq:cea:step1}, the definition of the facet residual~\eqref{eq:edgeJ}, the
$L^2$-orthogonalities~\eqref{eq1:proplindual}
and the Cauchy-Schwarz inequality,
we see
\begin{align*}
 &(f,w_\star^*-w_\star)_\Omega - \big(\AA_\star(v_\star,w_\star^*) - \AA(v_\star,w_\star)\big)\\
&\qquad =\sum_{T\in\TT_\star}\product{(1-\Pi_\star)(f+\div_\star\A\nabla v_\star)}{w_\star^*-w_\star}_{T}
 -\sum_{\facet\in\EE_\star^\Omega}\product{(1-\Pi_\star)J_\star(v_\star)}
 {w_\star^*-w_\star}_{\facet}\\
 &\qquad \leq
 \osc_\star(v_\star)\,\Big(\sum_{T\in\TT_\star}h_T^{-2}\norm{w_\star^*-w_\star}{L^2(T)}^2\Big)^{1/2}
 +\osc_\star(v_\star)\,\Big(\sum_{T\in\TT_\star}h_T^{-1}\norm{w_\star^*-w_\star}{L^2(\partial T\backslash\Gamma)}^2\Big)^{1/2}
\end{align*}%

% We bound the two sums separately: First,
% \begin{align*}
%  &\sum_{T\in\TT_\star}\product{(1-\Pi_\star)(f+\div_\star\A\nabla v_\star)}{w_\star^*-w_\star}_{T}
% % \\&
% % \qquad
% % \le \Big(\sum_{T\in\TT_\star}h_T^2\norm{(1-\Pi_\star)(f+\div_\star(\A\nabla v_\star))}{L^2(T)}^2\Big)^{1/2}
% % \Big(\sum_{T\in\TT_\star}h_T^{-2}\norm{w_\star^*-w_\star}{L^2(T)}^2\Big)^{1/2}
% % \\&\qquad
%  \le \osc_\star(v_\star)\,\Big(\sum_{T\in\TT_\star}h_T^{-2}\norm{w_\star^*-w_\star}{L^2(T)}^2\Big)^{1/2}.
% \end{align*}%
% Second,
% \begin{align*}
%  &\sum_{\facet\in\EE_\star^\Omega}\product{(1-\Pi_\star)J_\star(v_\star)}{w_\star^*-w_\star}_{\facet}
% % \\&
% % \qquad
% % \le\Big(\sum_{T\in\TT_\star}h_T\norm{(1-\Pi_\star)J_\star(v_\star)}{L^2(\partial T\backslash\Gamma)}^2\Big)^{1/2}
% % \Big(\sum_{T\in\TT_\star}h_T^{-1}\norm{w_\star^*-w_\star}{L^2(\partial T\backslash\Gamma)}^2\Big)^{1/2}
% % \\&\qquad
%  \le \osc_\star(v_\star)\,\Big(\sum_{T\in\TT_\star}h_T^{-1}\norm{w_\star^*-w_\star}{L^2(\partial T\backslash\Gamma)}^2\Big)^{1/2}
% \end{align*}%
% Third, we employ~\eqref{eq3:proplindual}--\eqref{eq4:proplindual} to see
% \begin{align*}
%  \Big(\sum_{T\in\TT_\star}h_T^{-2}\norm{w_\star^*-w_\star}{L^2(T)}^2\Big)^{1/2}
%  + \Big(\sum_{T\in\TT_\star}h_T^{-1}\norm{w_\star^*-w_\star}{L^2(\partial T\backslash\Gamma)}^2\Big)^{1/2}
%  \lesssim \norm{\nabla w_\star}{L^2(\Omega)},
% \end{align*}
% where the hidden constant depends only on $\sigma$-shape regularity of $\TT_\star$. Combining the last inequalities, we conclude~\eqref{eq:cea:step2}.
With~\eqref{eq3:proplindual}--\eqref{eq4:proplindual}, we conclude~\eqref{eq:cea:step2}.

{\bf Step~3.}
The FVM solution $u_\star$ satisfies
\begin{align}\label{eq:cea:step3}
 C^{-1}\,\enorm{u_\star-v_\star} \le \enorm{u-v_\star} + \osc_\star(v_\star)
 \quad\text{for all }v_\star\in\SS^1_0(\TT_\star),
\end{align}
where $C>0$ depends only on $\sigma$-shape regularity of $\TT_\star$ and the assumptions~\eqref{eq:A}--\eqref{eq3:A} on $\A$: 
With $u$ being the weak solution, we first note the identities 
\begin{align*}
 0 &\stackrel{\eqref{eq:weakform}}= (f,w_\star)_\Omega - \AA(u,w_\star)
 = \big[(f,w_\star)_\Omega - \AA(v_\star,w_\star)\big]
 - \AA(u-v_\star,w_\star)
 \quad\text{for all }v_\star\in\SS^1_0(\TT_\star).
\end{align*}
For  sufficiently fine $\TT_\star$, Lemma~\ref{lem:bilinearcomp} applies. 
Choose $w_\star:=u_\star-v_\star$ and $w_\star^*:=\II_\star^*w_\star$. Then,
\begin{align*}
 \enorm{u_\star-v_\star}^2 
 &\stackrel{\eqref{eq:stable}}\lesssim
 \AA_\star(u_\star-v_\star,w_\star^*)
 \\&
 \stackrel{\eqref{eq:disc_systemfvem}}= 
 %(f,w_\star^*)_\Omega - \AA_\star(v_\star,w_\star^*)
 %\\&
 %= 
 \big[(f,w_\star^*)_\Omega - \AA_\star(v_\star,w_\star^*)\big] - \big[(f,w_\star)_\Omega - \AA(v_\star,w_\star)\big]
 + \AA(u-v_\star,w_\star).
\end{align*}
Combining this with~\eqref{eq:cea:step2} and norm equivalence $\norm{\nabla w_\star}{L^2(\Omega)}\simeq\enorm{w_\star}$, we obtain
\begin{align*}
 \enorm{u_\star-v_\star}^2 
 \lesssim \osc_\star(v_\star)\,\enorm{w_\star} + \enorm{u-v_\star}\,\enorm{w_\star},
\end{align*}
where the hidden constant depends only on $\sigma$-shape regularity of $\TT_\star$ and the assumptions~\eqref{eq:A}--\eqref{eq3:A} on $\A$.
By choice of $w_\star$, we conclude~\eqref{eq:cea:step3}.

{\bf Step~4.}
Let $v_\star\in\SS^1_0(\TT_\star)$.
%We start with a triangle inequality 
%\begin{align*}
% \enorm{u-u_\star} \le \enorm{u-v_\star} + \enorm{u_\star-v_\star}. 
%\end{align*}
We employ (B1') with $\TT_\diamond = \TT_\star$ and $v_\diamond=u_\star$. 
% that
% \begin{align*}
%  \osc_\star = \osc_\star(u_\star) \lesssim \osc_\star(v_\star) + \enorm{u_\star-v_\star}.
% \end{align*}
Combining this with the triangle inequality and~\eqref{eq:cea:step3}, we see
\begin{align*}
 \enorm{u-u_\star} + \osc_\star
 \lesssim \enorm{u-v_\star} + \enorm{u_\star-v_\star} + \osc_\star(v_\star)
 \stackrel{\eqref{eq:cea:step3}}\lesssim \enorm{u-v_\star} + \osc_\star(v_\star).
\end{align*}
Altogether, this proves
\begin{align*}
 \enorm{u-u_\star} + \osc_\star
 \lesssim
 \min_{v_\star\in\SS^1_0(\TT_\star)}\big(\enorm{u-v_\star} + \osc_\star(v_\star)\big)
 \le \enorm{u-u_\star} + \osc_\star.
\end{align*}
Reliability~\eqref{eq:reliable} and efficiency~\eqref{eq:efficient} together with~\eqref{eq:osc-eta} imply $\eta_\star\simeq \enorm{u-u_\star} + \osc_\star$. This concludes~\eqref{eq:totalerror}. 
For the equivalence
\begin{align*}
 \enorm{u-u_\star^{\rm FEM}} + \osc_\star(u_\star^{\rm FEM})
\simeq \min_{v_\star\in\SS^1_0(\TT_\star)}\big(\enorm{u-v_\star} + \osc_\star(v_\star)\big),
\end{align*}
the reader is referred to~\cite[Lemma~5.1]{ffp14}. 
This also concludes~\eqref{eq:equivalence}.\qed

%===============================================================================
\subsection{Proof of Theorem~\ref{corollary:cea}}
%===============================================================================
\label{section:cea2}%
%\begin{proof}[\bfseries Proof of Corollary~\ref{corollary:cea}]
Let $v_\star\in\SS^1_0(\TT_\star)$. For $T\in\TT_\star$, the
triangle inequality shows
\begin{align*}
 \osc_\star(T,v_\star)^2
 &\lesssim 
 h_T^2\,\norm{(1-\Pi_\star)f}{L^2(T)}^2
 + h_T^2\,\norm{(1-\Pi_\star)\div_\star \A\nabla v_\star}{L^2(T)}^2
 \\&\quad
 + h_T\,\norm{(1-\Pi_\star)J_\star(v_\star)}{L^2(\partial T\backslash\Gamma)}^2.
\end{align*}
With Step~1--2 from the proof of Lemma~\ref{lemma:osc}, we thus see
\begin{align*}
 \osc_\star(v_\star)^2
 &\lesssim \norm{h_\star(1-\Pi_\star)f}{L^2(\Omega)}^2
 + \norm{h_\star\nabla v_\star}{L^2(\Omega)}^2.
\end{align*}
With~\eqref{eq:totalerror}, we obtain
\begin{eqnarray*}
 \enorm{u-u_\star} + \osc_\star 
 &\stackrel{\eqref{eq:totalerror}}\simeq& \min_{v_\star\in\SS^1_0(\TT_\star)}\big(\enorm{u-v_\star} + \osc_\star(v_\star)\big)
 \\&\lesssim& \norm{h_\star(1-\Pi_\star)f}{L^2(\Omega)} + \min_{v_\star\in\SS^1_0(\TT_\star)}\big(\enorm{u-v_\star} + \norm{h_\star\nabla v_\star}{L^2(\Omega)}\big).
\end{eqnarray*}
This proves~\eqref{eq:cor:cea}. In particular, norm equivalence $\enorm{u-v_\star}\simeq\norm{\nabla(u-v_\star)}{L^2(\Omega)}$ implies
\begin{align*}
 \enorm{u-u_\star} + \osc_\star 
 \lesssim \norm{h_\star}{L^\infty(\Omega)}\big(\norm{f}{L^2(\Omega)}+\norm{\nabla u}{L^2(\Omega)}\big)
 + \min_{v_\star\in\SS^1_0(\TT_\star)} \enorm{u-v_\star}.
\end{align*}
From this, we also conclude~\eqref{eq3:cor:cea}--\eqref{eq2:cor:cea}.\qed
%\end{proof}

%===============================================================================
\subsection{Discrete reliability}
%===============================================================================
\label{section:A4}%{F
The main result of this section is the following variant of the discrete reliability~(A4).

\begin{proposition}\label{prop:dlr}
Let $\TT_\star\in\refine(\TT_\diamond)$ be an arbitrary refinement of 
$\TT_\diamond\in\refine(\TT_0)$ 
and suppose that the corresponding discrete solutions $u_\star$ or $u_\diamond$ exist.
Then,
\begin{align}\label{eq:prop:dlr}
 \enorm{u_\star - u_\diamond}^2
 \le\c{bil}\sum_{T\in\TT_{\star}}h_T^2\enorm{u_\star - u_\diamond}_T^2+
 \c{dlr}\,\sum_{T\in\RR_\diamond}\eta_{\diamond}(T)^2,
\end{align}
where
$\RR_\diamond:=\set{T\in\TT_\diamond}{\exists T'\in\TT_\diamond\backslash\TT_{\star}\quad T\cap T'\neq\emptyset}$,
consists of all refined elements $\TT_\diamond\backslash\TT_{\star}$ plus one
additional layer of neighboring elements.
In particular, the discrete reliability~{\rm(A4)} follows provided that $\TT_\star$ is 
sufficiently fine, i.e., $\c{bil}\norm{h_\star}{L^\infty(\Omega)}^2\le1/2$.
The constants $\c{bil},\c{dlr}>0$ depend
only on $\Omega$, the assumptions~\eqref{eq:A}--\eqref{eq3:A} on $\A$, and on
$\sigma$-shape regularity of $\TT_\diamond$.
\end{proposition}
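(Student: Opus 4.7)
\smallskip
\noindent\textbf{Proof plan for Proposition~\ref{prop:dlr}.}
Set $w:=u_\star-u_\diamond\in\SS^1_0(\TT_\star)$ and $w^\ast:=\II_\star^\ast w\in\PP^0_0(\TT_\star^\ast)$. The starting point is the fine-mesh stability~\eqref{eq:stable} from Lemma~\ref{lem:bilinearcomp}: provided $\TT_\star$ is fine enough, one has $C_{\rm stab}\,\enorm{w}^2\le\AA_\star(w,w^\ast)$. Since $u_\star$ solves~\eqref{eq:disc_systemfvem} on $\TT_\star$, this rewrites as
\begin{align*}
 C_{\rm stab}\,\enorm{w}^2\le (f,w^\ast)_\Omega-\AA_\star(u_\diamond,w^\ast).
\end{align*}
Because $u_\diamond\in\SS^1_0(\TT_\diamond)\subset\SS^1_0(\TT_\star)$ has no analogue of the FEM Galerkin orthogonality at our disposal, I would introduce a coarse-space quasi-interpolation: let $\widetilde w:=J_\diamond w\in\SS^1_0(\TT_\diamond)$ be a Scott--Zhang projection satisfying the usual local first-order approximation estimates, and set $\widetilde w^\ast:=\II_\diamond^\ast\widetilde w\in\PP^0_0(\TT_\diamond^\ast)$. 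By~\eqref{eq:disc_systemfvem} on $\TT_\diamond$, one has $(f,\widetilde w^\ast)_\Omega=\AA_\diamond(u_\diamond,\widetilde w^\ast)$, so that
\begin{align*}
 C_{\rm stab}\,\enorm{w}^2\le (f,w^\ast-\widetilde w^\ast)_\Omega-\bigl[\AA_\star(u_\diamond,w^\ast)-\AA_\diamond(u_\diamond,\widetilde w^\ast)\bigr].
\end{align*}

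Next I would convert the two FVM bilinear forms into a single FEM bilinear form by invoking Lemma~\ref{lem:bilinearcomp} twice, once on $\TT_\star$ with the pair $(u_\diamond,w)$ and once on $\TT_\diamond$ with the pair $(u_\diamond,\widetilde w)$. This produces
\begin{align*}
 \AA_\star(u_\diamond,w^\ast)-\AA_\diamond(u_\diamond,\widetilde w^\ast)=\AA(u_\diamond,w-\widetilde w)+E,
\end{align*}
where $|E|$ is bounded by $\c{bil}\bigl(\sum_{T\in\TT_\star}h_T\enorm{u_\diamond}_T\enorm{w}_T+\sum_{T\in\TT_\diamond}h_T\enorm{u_\diamond}_T\enorm{\widetilde w}_T\bigr)$; the Cauchy--Schwarz inequality and $H^1$-stability of Scott--Zhang turn this into a term of the form $\c{bil}\bigl(\sum_{T\in\TT_\star}h_T^2\enorm{w}_T^2\bigr)^{1/2}\enorm{w}$ that can be absorbed on the left (after a Young inequality) whenever $\norm{h_\star}{L^\infty(\Omega)}$ is small enough.

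Then I would split the remaining main term as
\begin{align*}
 (f,w^\ast-\widetilde w^\ast)_\Omega-\AA(u_\diamond,w-\widetilde w)
 =\bigl[(f,w-\widetilde w)_\Omega-\AA(u_\diamond,w-\widetilde w)\bigr]+(f,\,w^\ast-w-(\widetilde w^\ast-\widetilde w))_\Omega.
\end{align*}
Elementwise integration by parts on $\TT_\diamond$, together with $(w-\widetilde w)|_\Gamma=0$, turns the first bracket into
\begin{align*}
 \sum_{T\in\TT_\diamond}(f+\div_\diamond\A\nabla u_\diamond,w-\widetilde w)_T-\sum_{F\in\EE_\diamond^\Omega}(\jump{\A\nabla u_\diamond},w-\widetilde w)_F.
\end{align*}
Here the standard Scott--Zhang localization yields $(w-\widetilde w)|_{T}=0$ on every $T\in\TT_\diamond$ whose vertex patch avoids $\TT_\diamond\setminus\TT_\star$, so only contributions on $\RR_\diamond$ survive. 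The Cauchy--Schwarz inequality together with the Scott--Zhang estimates $h_T^{-1}\norm{w-\widetilde w}{L^2(T)}+h_T^{-1/2}\norm{w-\widetilde w}{L^2(\partial T)}\lesssim\norm{\nabla w}{L^2(\omega_T)}$ produces the bound $\c{dlr}^{1/2}\eta_\diamond(\RR_\diamond)\,\enorm{w}$. The remaining term $(f,w^\ast-w-\widetilde w^\ast+\widetilde w)_\Omega$ is handled by exploiting the elementwise orthogonalities~\eqref{eq1:proplindual} of $\II_\star^\ast$ and $\II_\diamond^\ast$ to insert suitable $L^2$-projections of $f$, after which the same Scott--Zhang estimates turn this into another contribution that either fits into $\eta_\diamond(\RR_\diamond)\enorm{w}$ or into the absorbable $h_T^2\enorm{w}_T^2$ term.

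\smallskip
The main obstacle is precisely the failure of Galerkin orthogonality: the double application of Lemma~\ref{lem:bilinearcomp} and the passage via Scott--Zhang on both the primal and the dual mesh are what replace the single-line identity $\AA(u_\star-u_\diamond,\widetilde w)=0$ that would be available in the FEM setting. All perturbation terms produced along the way must be shown to carry the mesh-size weight $h_T$ in the right place so that, after Young's inequality, they can be absorbed into the left-hand side under the smallness assumption $\c{bil}\norm{h_\star}{L^\infty(\Omega)}^2\le1/2$; this is what forces the quantitative form stated in~\eqref{eq:prop:dlr}.
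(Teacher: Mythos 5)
There is a genuine gap at the central step where you claim to have neutralized the lack of Galerkin orthogonality. Applying Lemma~\ref{lem:bilinearcomp} to the pairs $(u_\diamond,w)$ on $\TT_\star$ and $(u_\diamond,\widetilde w)$ on $\TT_\diamond$ gives, via \eqref{eq:bilinearcomp}, the bound $|E|\le \c{bil}\big(\sum_{T\in\TT_\star}h_T\enorm{u_\diamond}_T\enorm{w}_T+\sum_{T\in\TT_\diamond}h_T\enorm{u_\diamond}_T\enorm{\widetilde w}_T\big)$; after Cauchy--Schwarz and $H^1$-stability of $J_\diamond$ this is $\c{bil}\big(\sum_{T}h_T^2\enorm{u_\diamond}_T^2\big)^{1/2}\enorm{w}$ --- the first factor carries $\enorm{u_\diamond}_T$, \emph{not} $\enorm{w}_T$. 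No stability property of Scott--Zhang converts $\enorm{u_\diamond}_T$ into $\enorm{w}_T$: the resulting term is of size $\norm{h_\star}{L^\infty(\Omega)}\enorm{u_\diamond}\,\enorm{w}$, it is global rather than supported on $\RR_\diamond$, and it can neither be absorbed into $\enorm{w}^2$ nor be dominated by $\eta_\diamond(\RR_\diamond)$. So your assertion that $E$ becomes $\c{bil}(\sum_T h_T^2\enorm{w}_T^2)^{1/2}\enorm{w}$ is unfounded, and this is exactly the point where the FVM consistency error must be handled by an identity rather than by a perturbation estimate. A related leak occurs in the term $(f,w^\ast-w-(\widetilde w^\ast-\widetilde w))_\Omega$: inserting $\Pi_\star f$ leaves you with $h_T\norm{f}{L^2(T)}$-type quantities near the refined region, whereas $\eta_\diamond(T)$ only controls $h_T\norm{f+\div_\diamond\A\nabla u_\diamond}{L^2(T)}$; reassembling the residual again produces uncontrolled $h_T\enorm{u_\diamond}_T$ contributions.

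The paper's proof avoids both problems. It never compares $\AA_\star(u_\diamond,\cdot)$ with $\AA(u_\diamond,\cdot)$: the discrete defect identity \eqref{eq2:lem:discdefect} of Lemma~\ref{lem:discdefect} expresses $\AA_\star(u_\star-u_\diamond,v_\star^*)$ \emph{exactly} as the coarse volume and jump residuals tested with $v_\star^*\in\PP^0_0(\TT_\star^*)$, so the only perturbative use of Lemma~\ref{lem:bilinearcomp} is for the pair $(u_\star-u_\diamond,\,u_\star-u_\diamond)$, producing the quadratic-in-$w$ term $\c{bil}\sum_T h_T^2\enorm{u_\star-u_\diamond}_T^2$ that appears explicitly (and is later absorbed) in \eqref{eq:prop:dlr}. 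Localization is then achieved not by Scott--Zhang but by subtracting, via \eqref{eq1:lem:discdefect}, a coarse dual test function $v_\diamond^*$ chosen to coincide with $v_\star^*$ on all unrefined dual boxes, so that the residual sums collapse to the refined dual boxes and are estimated by $\eta_\diamond(\RR_\diamond)$ using Lemmas~\ref{lem:proplindual} and~\ref{lemma:Pi*}. To salvage your route you would first have to reorganize the two bilinear-form comparisons together with the $f$-terms into exact residual identities before taking any norms --- which essentially amounts to re-deriving Lemma~\ref{lem:discdefect}.
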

The proof of Proposition~\ref{prop:dlr} relies on two
properties of the volume and facet residual, i.e., an orthogonality
property~\eqref{eq1:lem:discdefect}
and a discrete defect identity~\eqref{eq2:lem:discdefect} of the FVM bilinear form. 
\begin{lemma}
\label{lem:discdefect}
Let $\TT_{\star} \in \refine(\TT_\diamond)$ be an arbitrary refinement of
$\TT_\diamond\in\refine(\TT_0)$ 
and suppose that the corresponding discrete solutions $u_\star$ or $u_\diamond$ exist.
Then, there holds
\begin{align}\label{eq1:lem:discdefect}
    \sum_{T\in\TT_\diamond}\product{R_{\diamond}(u_\diamond)}{v^*_\diamond}_T
    -\sum_{F\in\EE_{\diamond}^\Omega}\product{J_{\diamond}(u_\diamond)}{v^*_\diamond}_F &= 0
  \quad\text{for all }v^*_\diamond\in\PP^0_0(\TT^*_\diamond)\\
\intertext{as well as}
\label{eq2:lem:discdefect}
  \sum_{T\in\TT_\diamond}\product{R_{\diamond}(u_\diamond)}{v^*_{\star}}_T - 
  \sum_{F\in\EE_{\diamond}^\Omega}\product{J_{\diamond}(u_\diamond)}{v^*_{\star}}_F
  &= \AA_\star(u_\star-u_\diamond,v^*_{\star})
  \quad\text{for all }v^*_{\star}\in\PP^0_0(\TT^*_{\star}).
\end{align}
\end{lemma}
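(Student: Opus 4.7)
My plan is to reduce both identities to a single elementwise/dual-cell integration-by-parts lemma. Before touching $u_\star$ or $u_\diamond$ specifically, I would first establish the following general fact: for any conforming triangulation $\TT$ with dual mesh $\TT^*$, any $w\in\SS^1_0(\TT)$, and any $v^*\in\PP^0_0(\TT^*)$,
\begin{align*}
\sum_{T\in\TT}\product{\div_\TT\A\nabla w}{v^*}_T
= -\AA_{\rm FV}(w,v^*) + \sum_{F\in\EE^\Omega}\product{\jump{\A\nabla w}}{v^*}_F,
\end{align*}
where $\AA_{\rm FV}(w,v^*):=-\sum_{a_i\in\NN^\Omega}v^*|_{V_i}\int_{\partial V_i}\A\nabla w\cdot\nf\,ds$ is the FVM bilinear form built from $\TT$. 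To prove this, I would split $v^*|_T=\sum_{a_i\in\NN_T}v^*|_{V_i}\,\chi_{V_i\cap T}$, apply the divergence theorem on each sub-region $V_i\cap T$, and organise the resulting boundary integrals by their origin: those along $\partial V_i\cap T$ reassemble (after summing over $T\ni a_i$) to $\int_{\partial V_i}\A\nabla w\cdot\nf\,ds$, giving $-\AA_{\rm FV}(w,v^*)$; those along $\partial T\cap V_i$ reassemble (after summing over two neighbouring elements of a facet $F$) to the facet jump. The homogeneous Dirichlet condition $v^*|_\Gamma=0$ (built into $\PP^0_0(\TT^*)$) kills the boundary-facet contributions.

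Identity \eqref{eq1:lem:discdefect} then follows instantly: apply the general identity with $\TT=\TT_\diamond$, $w=u_\diamond$, $v^*=v^*_\diamond$, add $(f,v^*_\diamond)_\Omega$ to both sides, and use the FVM equation $\AA_\diamond(u_\diamond,v^*_\diamond)=(f,v^*_\diamond)_\Omega$ from \eqref{eq:disc_systemfvem}.

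For \eqref{eq2:lem:discdefect} the idea is to apply the same general identity, but now on the \emph{finer} mesh $\TT_\star$, to the function $u_\diamond$, which lies in $\SS^1_0(\TT_\diamond)\subset\SS^1_0(\TT_\star)$ thanks to NVB. This gives
\begin{align*}
\sum_{T'\in\TT_\star}\product{\div_\star\A\nabla u_\diamond}{v^*_\star}_{T'}
= -\AA_\star(u_\diamond,v^*_\star) + \sum_{F'\in\EE^\Omega_\star}\product{\jump{\A\nabla u_\diamond}}{v^*_\star}_{F'}.
\end{align*}
The key observation I need is that the $\TT_\star$-sums here coincide with the $\TT_\diamond$-sums appearing in \eqref{eq2:lem:discdefect}: inside any $T\in\TT_\diamond$ the gradient $\nabla u_\diamond$ is constant and $\A$ is Lipschitz by \eqref{eq3:A}, so the piecewise divergence is unaffected by further subdivision ($\div_\star\A\nabla u_\diamond=\div_\diamond\A\nabla u_\diamond$ a.e.); and every facet $F'\in\EE^\Omega_\star$ either lies on some $F\in\EE^\Omega_\diamond$ (same jump) or lies strictly inside some $T\in\TT_\diamond$, where $\jump{\A\nabla u_\diamond}\equiv 0$. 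After this identification, adding $(f,v^*_\star)_\Omega$ to both sides and substituting the FVM equation $(f,v^*_\star)_\Omega=\AA_\star(u_\star,v^*_\star)$ for $u_\star$ on $\TT_\star$ yields \eqref{eq2:lem:discdefect}.

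The main technical step to get right is the bookkeeping in the general integration-by-parts identity: keeping careful track of the two distinct pieces of $\partial(V_i\cap T)$, the orientations of the outer normals, the re-summation pattern (by node $a_i$ for the dual pieces versus by facet $F$ for the primal pieces), and the vanishing boundary contributions. The ``refinement invariance'' step in \eqref{eq2:lem:discdefect} is almost free once one is confident that the $\TT_\diamond$-piecewise quantities associated with $u_\diamond$ carry no extra distributional mass over facets newly introduced by $\TT_\star$, which is where assumption \eqref{eq3:A} on the alignment of $\A$ with $\TT_0$ (and hence with every refinement) enters.
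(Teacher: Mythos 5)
Your proposal is correct and follows essentially the same route as the paper: the paper's proof also rests on a divergence-theorem/integration-by-parts identity relating the elementwise divergence, the facet jumps, and the FVM bilinear form (stated box-by-box on the dual mesh $\TT^*_\star$ rather than as your general lemma, but with identical content), then uses $\div_\star\A\nabla u_\diamond=\div_\diamond\A\nabla u_\diamond$, the vanishing of jumps on facets interior to coarse elements, and the FVM equations on $\TT_\diamond$ and $\TT_\star$ exactly as you do. The only cosmetic difference is that the paper cites the literature for \eqref{eq1:lem:discdefect} instead of deriving it from the same identity, which your unified treatment handles cleanly.
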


\begin{proof}
The proof of~\eqref{eq1:lem:discdefect} is well-known and found, e.g.,
in~\cite{Carstensen:2005-1,Erath:2010-phd,Erath:2013-1}. 
The proof of~\eqref{eq2:lem:discdefect} is adopted from~\cite{Zou:2009-1} 
for an arbitrary refinement $\TT_\star\in\refine(\TT_\diamond)$:
The divergence theorem shows for all boxes $V'\in\TT^*_\star$ that
\begin{align}
	\label{eq:helpdiscdefect}
	\sum_{T'\in\TT_\star}\int_{T'\cap V'}\div_\star\A\nabla u_\diamond\,dx
	=\sum_{\zeta'\in\EE_{V',\star}}\int_{\zeta'\backslash\Gamma}J_\star(u_\diamond)\,ds+
	\int_{\partial V'}\A\nabla u_\diamond\cdot\nf\,ds.
\end{align}
Let $v^*_{\star}\in\PP^0_0(\TT^*_\star)$. We multiply the above equation by $v^*_{\star}|_{V'}$ and sum over all $V'\in\TT^*_\star$. With $\div_\star \A\nabla u_\diamond = \div_\diamond \A\nabla u_\diamond$, the left-hand side then reads
\begin{align}
		\label{eq:helpadisdefect}
 \sum_{V'\in\TT_\star^*}v^*_{\star}|_{V'}\sum_{T'\in\TT_\star}
 \int_{T'\cap V'}\div_\star \A\nabla u_\diamond\,dx
 = \product{\div_\star \A\nabla u_\diamond}{v^*_{\star}}_\Omega
 = \sum_{T\in\TT_\diamond}\product{\div_\diamond \A\nabla u_\diamond}{v^*_{\star}}_T.
\end{align}
Since $\A\nabla u_\diamond$ is continuous in $T\in\TT_\diamond$ and $J_\star(u_\diamond) = J_\diamond(u_\diamond)$ on $F\in\EE^\Omega_\diamond$, it holds
\begin{align}
	\label{eq:helpbdisdefect}
 \sum_{V'\in\TT_\star^*}v^*_{\star}|_{V'} 
 \sum_{\zeta'\in\EE_{V',\star}}\int_{\zeta'\backslash\Gamma}J_\star(u_\diamond)\,ds
  = \sum_{F'\in\EE_\star^\Omega}\product{J_\star(u_\diamond)}{v_\star^*}_{F'}
  = \sum_{F\in\EE^\Omega_\diamond}\product{J_\diamond(u_\diamond)}{v_\star^*}_F.
\end{align}
By definition~\eqref{eq:fvembilinear} of $\AA_\star(\cdot,\cdot)$, the identity~\eqref{eq:helpdiscdefect}
becomes with~\eqref{eq:helpadisdefect} and~\eqref{eq:helpbdisdefect}
\begin{align}
	\label{eq:help1discdefect}
 \sum_{T\in\TT_\diamond}\product{\div_\diamond \A\nabla u_\diamond}{v^*_{\star}}_T
 = \sum_{F\in\EE^\Omega_\diamond}\product{J_\diamond(u_\diamond)}{v_\star^*}_F - \AA_\star(u_\diamond,v^*_{\star}).
\end{align}%
On the other hand the FVM formulation~\eqref{eq:disc_systemfvem} yields
\begin{align}
\label{eq:help2discdefect}
\product{f}{v^*_{\star}}_\Omega = \AA_\star(u_\star,v^*_{\star}).	
\end{align}
Adding~\eqref{eq:help1discdefect}--\eqref{eq:help2discdefect}, we conclude the proof.
\end{proof}
The following Poincar\'e- and trace-type inequalities play a key role to estimate
quantities over the elements of the dual grid.
\begin{lemma}\label{lemma:Pi*}
For each box $V_i\in\TT_\star^*$, let $a_i\in\NN_\star$ be the corresponding node.
Define %the projection 
\begin{align*}
\Pi_\star^*:L^2(\Omega)\to\PP^0_0(\TT_\star^*),
\quad(\Pi_\star^* v)|_{V_i} := \begin{cases}
 \frac{1}{|V_i|}\,\int_{V_i} v\,dx,
 \quad&\text{if }a_i\in\NN_\star^\Omega,\\
 0,&\text{if }a_i\in\NN_\star^\Gamma.
 \end{cases}
\end{align*}
Let $V\in\TT_\star^*$ and $\zeta\in\EE_{V,\star}$.
Then, there holds, for all $v\in H^1_0(\Omega)$,
\begin{align}
  \label{eq:poincareV}
  \norm{v-\Pi_\star^*v}{L^2(V)} &\le C\, \diam(V)\,\norm{\nabla v}{L^2(V)},\\
  \label{eq:trace2}
  \norm{v-\Pi_\star^*v}{L^2(\zeta)} &\le C\,\diam(V)^{1/2}\,\norm{\nabla v}{L^2(V)}.
\end{align}
The constant $C>0$ depends only on the $\sigma$-shape regularity of $\TT_\star$.
\end{lemma}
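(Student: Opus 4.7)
The plan is to prove both estimates by reducing to a fixed reference configuration via an affine scaling and then applying classical Poincaré/Friedrichs and trace inequalities, distinguishing the interior-node and boundary-node cases.

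First, I would record a geometric preliminary: by $\sigma$-shape regularity of $\TT_\star$ and the explicit construction of the dual mesh $\TT_\star^*$ in Section~\ref{subsec:dualmesh}, each box $V\in\TT_\star^*$ is a Lipschitz polyhedron whose shape (the ratio of $\diam(V)$ to the inradius, and the Lipschitz character of $\partial V$) depends only on $\sigma$. In particular, the affine scaling $x\mapsto\diam(V)^{-1}(x-a_i)$ maps $V$ to a reference polyhedron $\widehat V$ of diameter one lying in one of finitely many $\sigma$-controlled shape classes, and maps each $\zeta\in\EE_{V,\star}$ to a facet $\widehat\zeta\subset\partial\widehat V$ whose $(d-1)$-dimensional measure is uniformly bounded from above and below.

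For an interior node $a_i\in\NN_\star^\Omega$, the projection $\Pi_\star^*v|_V$ is the $L^2$-orthogonal projection onto constants, so~\eqref{eq:poincareV} is the Poincaré-Wirtinger inequality on $V$. On $\widehat V$ it reads $\|w-\overline w\|_{L^2(\widehat V)}\le\widehat C\,\|\nabla w\|_{L^2(\widehat V)}$ with $\widehat C$ depending only on the shape class, and the standard scaling argument yields~\eqref{eq:poincareV}. For the trace estimate, I would apply the multiplicative trace inequality on $\widehat V$, $\|w\|_{L^2(\widehat\zeta)}^2\le\widehat C\bigl(\|w\|_{L^2(\widehat V)}^2+\|\nabla w\|_{L^2(\widehat V)}^2\bigr)$, to $w=v-\Pi_\star^*v$, rescale to $V$, and combine with~\eqref{eq:poincareV} to conclude~\eqref{eq:trace2}.

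For a boundary node $a_i\in\NN_\star^\Gamma$, we have $\Pi_\star^*v|_V=0$, so both estimates become Friedrichs-type bounds $\|v\|_{L^2(V)}\le C\diam(V)\|\nabla v\|_{L^2(V)}$ and $\|v\|_{L^2(\zeta)}\le C\diam(V)^{1/2}\|\nabla v\|_{L^2(V)}$ for $v\in H^1_0(\Omega)$. The key point is that $V\cap\Gamma$ contains the boundary-node patch around $a_i$ and therefore has $(d-1)$-dimensional Hausdorff measure comparable to $\diam(V)^{d-1}$ uniformly in $V$. Hence on the reference box $\widehat V$, the Friedrichs inequality $\|w\|_{L^2(\widehat V)}\le\widehat C\,\|\nabla w\|_{L^2(\widehat V)}$ holds for all $w\in H^1(\widehat V)$ whose trace vanishes on $\widehat V\cap\widehat\Gamma$. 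Applying this to $v$ (whose trace vanishes on $V\cap\Gamma$ because $v\in H^1_0(\Omega)$) and rescaling gives the Friedrichs estimate. The trace bound then follows from the same multiplicative trace inequality as in Case~1.

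The main obstacle is the boundary case: one must verify that, as $V$ ranges over the $\sigma$-regular family of dual boxes, the boundary portion $V\cap\Gamma$ occupies a uniformly non-degenerate fraction of $\partial\widehat V$, so that the Friedrichs constant on $\widehat V$ stays bounded. This is a combinatorial-geometric consequence of the dual-mesh construction (in 3D one has to inspect the cuboidal subpieces $V_i\cap T$ illustrated in Figure~\ref{fig:dual3d}\subref{fig:dual3dconst}) together with $\sigma$-shape regularity, and is the only step that is not a direct quote of a standard inequality.
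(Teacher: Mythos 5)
Your proposal is correct in substance but follows a genuinely different route from the paper. You rescale the entire dual box $V$ to a reference polytope and invoke Poincar\'e--Wirtinger, Friedrichs, and multiplicative trace inequalities there; the paper instead subdivides $V$ into a $\sigma'$-shape-regular simplicial triangulation $\ZZ_{V,\star}$ (splitting each piece $V\cap T$ into two triangles in 2D, resp.\ three pyramids in 3D), observes that $V$ is precisely the node patch of $a_i$ with respect to $\ZZ_{V,\star}$, and then quotes the standard patch-type estimate $\norm{v-v_Z}{L^2(V)}\lesssim\diam(Z)\,\norm{\nabla v}{L^2(V)}$ with $v_Z$ the mean over a single sub-simplex $Z$ (or $v_Z:=0$ when $a_i\in\NN_\star^\Gamma$, using that $V\cap\Gamma$ has positive surface measure), combined with the best-approximation property of $\Pi_\star^*$ in the interior case and a trace inequality on the one simplex $Z$ with $\zeta\subset\partial Z$. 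The paper's detour through $\ZZ_{V,\star}$ buys uniformity of all constants for free, since everything reduces to simplex-level scaling plus a patch Poincar\'e inequality with known $\sigma'$-dependence. Your direct approach is cleaner conceptually but the uniformity claim needs more care: the dual boxes do \emph{not} fall into finitely many shape classes (their geometry varies continuously with the primal vertices, and the number of their faces varies with the valence of $a_i$); what is true is that they form a family with uniformly controlled Lipschitz character and chunkiness, so the constants are uniform by a compactness or explicit-cone-condition argument. You correctly identify the boundary case as the delicate point, and your observation that $V\cap\Gamma$ has $(d-1)$-measure $\simeq\diam(V)^{d-1}$ is exactly the needed non-degeneracy; the paper's patch argument makes this quantitative without further geometric inspection. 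With the "finitely many shape classes" phrase replaced by a uniform-shape-family argument, your proof is complete.
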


\begin{proof}
The set $\TT_\star|_V := \set{V\cap T}{T\in\TT_\star\text{ with }V\cap T\neq\emptyset}$ is a partition of $V$ into quadrilaterals in 2D and cuboids in 3D, respectively. 
In 2D each quadrilateral can itself be divided into two triangles.
In 3D each cuboid can be divided into three pyramids (with the center of gravity
of $T$ as top).
Note that a quadrilateral $\zeta\in\EE_{V,\star}$ 
builds the base of one pyramid.
This gives rise to a triangulation $\ZZ_{V,\star}$ of $V$;
see Figure~\ref{fig:primaldual} and Figure~\ref{fig:dual3d} for 2D and 3D, respectively. 

Choose $Z\in\ZZ_{V,\star}$ with $\zeta\subset \partial Z$.
Note that $\ZZ_{V,\star}$ is $\sigma'$-shape regular, where $\sigma'$ depends only on $\sigma$, and that the box $V$ is just the node patch of the corresponding node $a\in\NN_\star$ with respect to $\ZZ_{V,\star}$.  If $a\in\NN_\star^\Omega$, let $v_Z:=(1/|Z|)\,\int_Zv\,dx$ denote the piecewise integral mean. If $a\in\NN_\star^\Gamma$, we define $v_Z:=0$, since then $V\cap\Gamma$ has positive measure.
In either case, it holds 
\begin{align*}
 \norm{v-\Pi_\star^*v}{L^2(V)} \le \norm{v-v_Z}{L^2(V)}
 &\lesssim \diam(Z)\,\norm{\nabla v}{L^2(V)},
\end{align*}
where the hidden constant depends only on $\sigma'$ and hence on $\sigma$;
see~\cite{MR559195}. With $\diam(Z)\le\diam(V)$, the Poincar\'e-type inequality~\eqref{eq:poincareV} follows.

The trace inequality, a scaling argument, and $\diam(V)\simeq\diam(Z)$ lead to
\begin{align*}
%  \label{eq:trace1}
	\begin{split}
  \norm{v}{L^2(\zeta)}
  &\lesssim \diam(Z)^{-1/2}\norm{v}{L^2(Z)}+
  \diam(Z)^{1/2}\norm{\nabla v}{L^2(Z)}
  \\&\lesssim \diam(V)^{-1/2}\norm{v}{L^2(V)}+\diam(V)^{1/2}\norm{\nabla v}{L^2(V)}.
	\end{split}
\end{align*}
Combining this with the Poincar\'e-type inequality~\eqref{eq:poincareV}, we obtain
\begin{align*}
  \norm{v-\Pi_\ell^*v}{L^2(\zeta)}
%  &\lesssim \diam(V)^{-1/2}\norm{v-\Pi_\ell^*v}{L^2(V)}
%  +\diam(V)^{1/2}\norm{\nabla (v-\Pi_\ell^*v)}{L^2(V)}
%  \\
	&\lesssim \diam(V)^{1/2}\norm{\nabla v}{L^2(V)}.
\end{align*}%
This concludes the proof.
\end{proof}
%Note that with $V\cap T\not=\emptyset$ there holds $h_V\sim h_T$.

\begin{figure}[t]
  \centering
  \subfigure[Mesh $\TT_{\diamond}$ and $\TT_{\star}$ (incl. dashed lines).]
	{\label{fig:refmesh1}\includegraphics[width=0.23\textwidth]{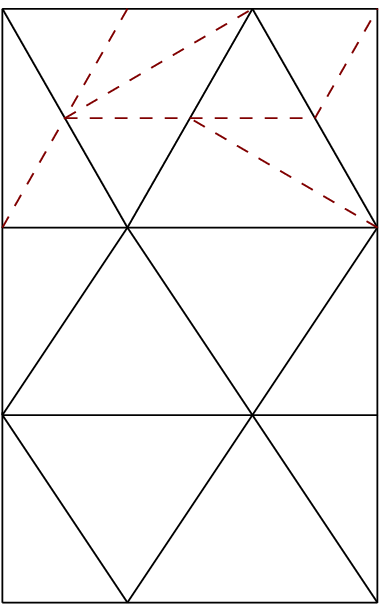}}
  \hspace*{0.01\textwidth}
  \subfigure[Dual mesh $\RR^*_{\diamond}$.]
  {\label{fig:refmesh2}\includegraphics[width=0.23\textwidth]{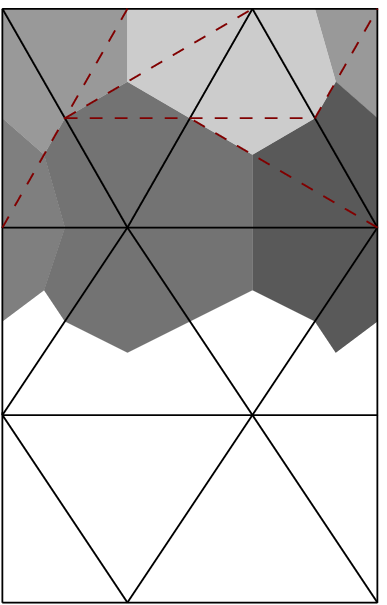}}
  \hspace*{0.01\textwidth}
  \subfigure[Dual mesh $\RR^*_{\star}$.]
  {\label{fig:refmesh3}\includegraphics[width=0.23\textwidth]{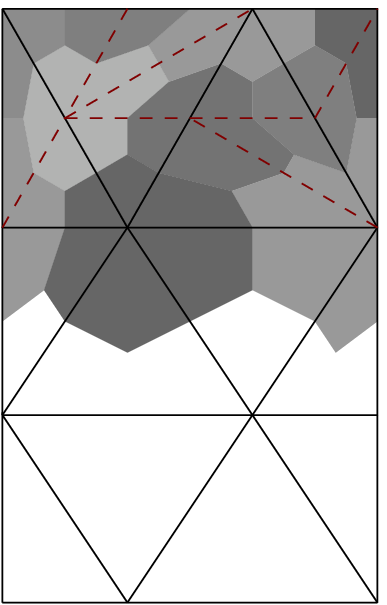}}
  \hspace*{0.01\textwidth}
  \subfigure[Set $\RR_\diamond$.]
  {\label{fig:refmesh4}\includegraphics[width=0.23\textwidth]{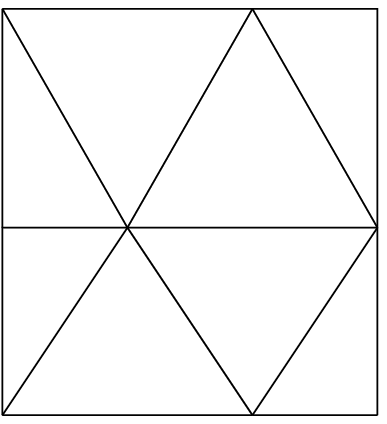}}
  \caption{In~\subref{fig:refmesh1} we see the coarse mesh $\TT_{\diamond}$ for 2D. The dashed lines 
	show the refinement and build the refined mesh $\TT_{\star}$. In~\subref{fig:refmesh2}
	and~\subref{fig:refmesh3} (gray boxes) we see the dual mesh of the refined areas
	notated by $\RR^*_{\diamond}$ and $\RR^*_{\star}$,
	respectively. Finally~\subref{fig:refmesh4} shows the elements $T\in\TT_{\diamond}$ which build
	the set $\RR_\diamond:=\{T\in\omega_a|a\subset\partial(\TT_{\diamond}\backslash\TT_{\star})\}$ in 
	this example.}
  \label{fig:refmesh}
\end{figure}
      
\begin{proof}[{\bfseries Proof of Proposition~\ref{prop:dlr}}]
To abbreviate notation, let $R_\diamond:=R_\diamond(u_\diamond)$ and $J_\diamond:=J_\diamond(u_\diamond)$ denote volume residual~\eqref{eq:residuum} and 
facet residual~\eqref{eq:edgeJ} with respect to the discrete solution $u_\diamond$.
For arbitrary
$v_{\star}\in\SS^1_0(\TT_{\star})$,
 $v^*_{\star}\in\PP^0_0(\TT_{\star})$, and $v^*_{\diamond}\in\PP^0_0(\TT_{\diamond})$,~\eqref{eq2:lem:discdefect}
 and~\eqref{eq1:lem:discdefect} of Lemma~\ref{lem:discdefect} together with the mesh relation~\eqref{eq:meshrelation} show
\begin{align}
	\label{eq:helpdlr1}
  \begin{split}	
  &\AA(u_\star-u_\diamond,v_{\star})\\
  &\qquad=\AA(u_\star-u_\diamond,v_{\star})-\AA_\star(u_\star-u_\diamond,v_{\star}^*)
  +\sum_{T\in\TT_\diamond}\product{R_{\diamond}}{v^*_{\star}-v^*_{\diamond}}_T -
  \sum_{F\in\EE_{\diamond}^\Omega}\product{J_{\diamond}}{v^*_{\star}-v^*_{\diamond}}_{F}\\
  &\qquad=\AA(u_\star-u_\diamond,v_{\star})-\AA_\star(u_\star-u_\diamond,v_{\star}^*)
  +\sum_{V\in\TT^*_\diamond}\Big(\product{R_{\diamond}}{v^*_{\star}-v^*_{\diamond}}_V -
  \sum_{\zeta\in\EE_{V,\diamond}}\product{J_{\diamond}}{v^*_{\star}-v^*_{\diamond}}_{\zeta\backslash\Gamma} \Big);
  \end{split}
\end{align}
see Section~\ref{subsec:dualmesh}, Figure~~\ref{fig:primaldual}\subref{fig:edgesV}
and Figure~\ref{fig:dual3d}\subref{fig:dual3dEEV}
for the definition of $\EE_{V,\diamond}$.

Next, we note that the discrete ansatz spaces are nested, while
the discrete test spaces are not. However, in the non-refined area $\TT_\diamond\cap\TT_\star$
the shape of the dual grid elements is the same.
We use this to truncate the sum of~\eqref{eq:helpdlr1}.
To get the final sum over $\RR_\diamond$
in~\eqref{eq:prop:dlr}, we have to define the functions $v_\star^*$ and $v^*_\diamond$
appropriately to apply Lemma~\ref{lem:proplindual} and Lemma~\ref{lemma:Pi*}, respectively.	
To formalize this, we define 
$\RR^*_\diamond:=\TT^*_\diamond\backslash \TT^*_{\star}$
and $\RR^*_{\star}:=\TT^*_{\star}\backslash\TT^*_\diamond$, 
i.e., the dual mesh of the refined areas; 
see Figure~\ref{fig:refmesh} for a 2D illustration. Note that 
 \begin{align}
	 \label{eq:help1disc}
 	\bigcup_{V\in\RR^*_\diamond}V=\bigcup_{V'\in\RR^*_{\star}}V',
 \end{align}
Consider the transition area $\RR_\diamond\backslash(\TT_\diamond\backslash\TT_{\star}) = \set{T\in \TT_\diamond\cap\TT_\star}{\exists T'\in\TT_\diamond\backslash\TT_{\star}\quad T¯\cap T'\neq\emptyset}$ (second row of
 triangles in Figure~\ref{fig:refmesh}) which consists of all non-refined neighbors of a refined element. For all $T\in\RR_\diamond\backslash(\TT_\diamond\backslash\TT_{\star})$, it holds
\begin{align*}
 \set{V\cap T}{V\in\RR^*_\diamond} = \set{V\cap T}{V\in\RR_\star^*},
\end{align*}
i.e., the shape of $V\in\RR^*_\diamond$ coincides with the shape of some $V'\in\RR^*_{\star}$ in the transition area.

Let $v_{\star}:=u_\star-u_\diamond\in\SS^1_0(\TT_{\star})$.
%\improve{eigentlich kann $v_{\star}$ hier noch beliebig sein (wie oben eingefuehrt, nur die dualen Funktionen werden festgelegt. Gegen Ende
%schreiben wir nochmals $v_{\star}=u_\star-u_\diamond$. hier loeschen?)} 
Choose $v_\star^*:=\II_\star^*v_\star\in\PP^0_0(\TT_\star^*)$. Define $v^*_\diamond\in\PP^0_0(\TT^*_\diamond)$ by
\begin{align*}
 v^*_\diamond|_{V} := \begin{cases}
 (\Pi^*_\diamond v_\star)|_{V} \quad&\text{if }V\in\RR^*_\diamond,\\
 (\II^*_\diamond v_\star)|_{V} \quad&\text{otherwise}.
 \end{cases}
\end{align*}
For $V\in\TT^*_\diamond\backslash\RR^*_\diamond = \TT^*_\diamond\cap\TT_\star^*$, this implies
$v^*_{\star}|_V=v^*_{\diamond}|_V$,
%Hence, the piecewise constant functions coincide on the boxes which are associated with nodes of $\TT_\diamond$
%which do not belong to refined elements, 
i.e., $v^*_{\star}=v^*_{\diamond}$ within the white area of Figure~\ref{fig:refmesh}\subref{fig:refmesh2} and~\ref{fig:refmesh}\subref{fig:refmesh3}. 
We use this observation to truncate the sum over $\TT^*_\diamond$ in~\eqref{eq:helpdlr1} and replace $\TT^*_\diamond$ by $\RR^*_\diamond$.
Together with~\eqref{eq:bilinearcomp} from Lemma~\ref{lem:bilinearcomp} for the bilinear forms, we get
 \begin{align}\label{eq:dlr:dp1}
 \begin{split}
   \AA(u_\star-u_\diamond,v_{\star})&\leq
   \c{bil}\sum_{T\in\TT_{\star}}h_T\,\enorm{u_\star-u_\diamond}_T \enorm{v_{\star}}_T\\
   &\quad+\sum_{V\in\RR^*_\diamond}\Big(\product{R_{\diamond}}{v^*_{\star}-v^*_{\diamond}}_V -
   \sum_{\zeta\in\EE_{V,\diamond}}\product{J_{\diamond}}{v^*_{\star}-v^*_{\diamond}}_{\zeta\backslash\Gamma}\Big). 
 \end{split}
 \end{align}
% We stress that Lemma~\ref{lem:bilinearcomp} is only valid for 
% $v^*_{\star}=\II_{\star}^* v_{\star}\in\PP^0_0(\TT_\star^*)$ and not for a general
% $v^*_{\star}\in\PP^0(\TT_{\star})$, e.g., the integral mean $v_\star^*=\Pi_\star^*v_\star$.
% The sum over $T\in\TT_\star$  in~\eqref{eq:dlr:dp1} is estimated by the Cauchy-Schwarz inequality
% \begin{align}\label{eq:dlr:dp2}
%  \sum_{T\in\TT_{\star}}h_T\,\enorm{u_\star-u_\diamond}_T \enorm{v_{\star}}_T
%  \le\Big(\sum_{T\in\TT_{\star}}h_T^2\,\enorm{u_\star-u_\diamond}_T^2\Big)^{1/2}\,\enorm{v_{\star}}.
% \end{align}
%  With~\eqref{eq:help1disc}, we rewrite the sum over the boxes $V\in\RR_\diamond^*$ in~\eqref{eq:dlr:dp1} as
%  \begin{align}\label{eq:dlr:dp3}
%  \begin{split}
% &\sum_{V\in\RR^*_\diamond}\Big(\product{R_{\diamond}}{v^*_{\star}-v^*_{\diamond}}_V -
%    \sum_{\zeta\in\EE_{V,\diamond}}\product{J_{\diamond}}{v^*_{\star}-v^*_{\diamond}}_{\zeta\backslash\Gamma}\Big)\\
%       &\quad=\sum_{V\in\RR^*_\diamond}\Big(\product{R_{\diamond}}{v_{\star}-v^*_{\diamond}}_V -
%    \sum_{\zeta\in\EE_{V,\diamond}}\product{J_{\diamond}}{v_{\star}-v^*_{\diamond}}_{\zeta\backslash\Gamma} \Big)\\
%    &\qquad+\sum_{V'\in\RR^*_{\star}}\Big(\product{R_{\diamond}}{v^*_{\star}-v_{\star}}_{V'} -
%    \sum_{\zeta'\in\EE_{V',\star}
% 	 \atop \zeta'\subset F\in\EE_{\diamond}}\product{J_{\diamond}}{v^*_{\star}-v_{\star}}_{\zeta'\backslash\Gamma} \Big).
% 	 \end{split}
%  \end{align}
 Next, we estimate the sum over $T\in\TT_\star$ by the Cauchy-Schwarz inequality.
  Furthermore, we add $v_\star-v_\star$ and use~\eqref{eq:help1disc} to 
  rewrite the sum over the boxes $V\in\RR^*_\diamond$ in~\eqref{eq:dlr:dp1}:
 \begin{align}\label{eq:dlr:dp2}
	 \begin{split}
  \AA(u_\star-u_\diamond,v_{\star})&\leq
  \Big(\sum_{T\in\TT_{\star}}h_T^2\,\enorm{u_\star-u_\diamond}_T^2\Big)^{1/2}\,\enorm{v_{\star}}\\
   &\qquad+\sum_{V\in\RR^*_\diamond}\Big(\product{R_{\diamond}}{v_{\star}-v^*_{\diamond}}_V -
    \sum_{\zeta\in\EE_{V,\diamond}}\product{J_{\diamond}}{v_{\star}-v^*_{\diamond}}_{\zeta\backslash\Gamma} \Big)\\
    &\qquad+\sum_{V'\in\RR^*_{\star}}\Big(\product{R_{\diamond}}{v^*_{\star}-v_{\star}}_{V'} -
    \sum_{\zeta'\in\EE_{V',\star}
 	 \atop \zeta'\subset F\in\EE_{\diamond}}
	 \product{J_{\diamond}}{v^*_{\star}-v_{\star}}_{\zeta'\backslash\Gamma} \Big).
 	 \end{split}
  \end{align}

 Note that $\EE_{V',\star}$ contains also parts of facets from $\TT_{\star}$ which are not
 needed here and which are avoided by $\zeta'\subset F\in\EE_{\diamond}$. To abbreviate notation, let $h_V:=\diam(V)$ and note that $\sigma$-shape regularity implies $h_V\simeq h_T$ for all $V\in\TT^*_\diamond$ and $T\in\TT_\diamond$ with $V\cap T\neq\emptyset$. 
Next, we estimate the two sums over $\RR^*_\diamond$ and $\RR^*_{\star}$:
First, with~\eqref{eq:poincareV} and~\eqref{eq:trace2} of 
Lemma~\ref{lemma:Pi*} and $v^*_\diamond|_V = \Pi^*_\diamond v_\star|_V$ for all $V\in\RR^*_\diamond$, the Cauchy-Schwarz inequality yields
\begin{align}\label{eq:dlr:dp4}
 \nonumber
 &\sum_{V\in\RR^*_\diamond}\Big(\product{R_{\diamond}}{v_{\star}-v^*_{\diamond}}_V -
 \sum_{\zeta\in\EE_{V,\diamond}}\product{J_{\diamond}}{v_{\star}-v^*_{\diamond}}_{\zeta\backslash\Gamma} \Big)
 % \\&\quad
%  \nonumber
%  \le \sum_{V\in\RR^*_\diamond}\Big(\norm{R_{\diamond}}{L^2(V)}\norm{v_{\star}-\Pi_\diamond^* v_\star}{L^2(V)}
%  + \sum_{\zeta\in\EE_{V,\diamond}} \norm{J_{\diamond}}{L^2(\zeta\backslash\Gamma)}\norm{v_{\star}-\Pi_\diamond^* v_\star}{L^2(\zeta\backslash\Gamma)}\Big)
 \\&\quad
 \nonumber
 \lesssim \bigg[\Big(\sum_{V\in\RR^*_\diamond}h_V^2\norm{R_{\diamond}}{L^2(V)}^2\Big)^{1/2}
 + \Big(\sum_{V\in\RR^*_\diamond}\sum_{\zeta\in\EE_{V,\diamond}}h_V\norm{J_{\diamond}}{L^2(\zeta\backslash\Gamma)}^2\Big)^{1/2}\bigg]\,\Big(\sum_{V\in\RR^*_\diamond}\norm{\nabla v_{\star}}{L^2(V)}^2\Big)^{1/2}.
 \intertext{With $\bigcup_{V\in\RR^*_\diamond}V\subset\bigcup_{T\in\RR_\diamond}T$, we hence obtain}
 &\quad
 \lesssim \bigg[\sum_{T\in\RR_\diamond}\big(h_T^2\,\norm{R_{\diamond}}{L^2(T)}^2 + h_T\,\norm{J_{\diamond}}{L^2(\partial T\backslash\Gamma)}^2\big)\bigg]^{1/2}\,\enorm{v_\star}
 = \Big(\sum_{T\in\RR_\diamond}\eta_\diamond(T)^2\Big)^{1/2}\,\enorm{v_\star}.
\end{align}
Note that $\bigcup_{V'\in\RR^*_{\star}}V'\subset\bigcup_{T\in\RR_\diamond}T$. Then, 
with~\eqref{eq3:proplindual} and~\eqref{eq4:proplindual} of Lemma~\ref{lem:proplindual} and $v_\star^*=\II_\star v_\star$, we get as before
\begin{align}\label{eq:dlr:dp5}
\nonumber
 &\sum_{V'\in\RR^*_{\star}}\Big(\product{R_{\diamond}}{v^*_{\star}-v_{\star}}_{V'} -
   \sum_{\zeta'\in\EE_{V',\star} \atop \zeta'\subset F\in\EE_{\diamond}}
   \product{J_{\diamond}}{v^*_{\star}-v_{\star}}_{\zeta'\backslash\Gamma} \Big)
 % \\&\quad
%  \le \sum_{V'\in\RR^*_{\star}}\Big(\norm{R_{\diamond}}{L^2(V')}\norm{\II_\star^*v_{\star}-v_{\star}}{L^2(V')} +
%    \sum_{\zeta'\in\EE_{V',\star} \atop \zeta'\subset F\in\EE_{\diamond}}
%    \norm{J_{\diamond}}{L^2(\zeta'\backslash\Gamma)}\norm{\II_\star^*v_{\star}-v_{\star}}{L^2(\zeta'\backslash\Gamma)} \Big)
\nonumber
 \\&\quad
 \lesssim \bigg[\sum_{T\in\RR_\diamond}\big(h_T^2\,\norm{R_{\diamond}}{L^2(T)}^2 + h_T\,\norm{J_{\diamond}}{L^2(\partial T\backslash\Gamma)}^2\big)\bigg]^{1/2}\,\enorm{v_\star}
 = \Big(\sum_{T\in\RR_\diamond}\eta_\diamond(T)^2\Big)^{1/2}\,\enorm{v_\star}.
\end{align}
% Inserting~\eqref{eq:dlr:dp4}--\eqref{eq:dlr:dp5} into~\eqref{eq:dlr:dp3}, we obtain
% \begin{align*}
%  \sum_{V\in\RR^*_\diamond}\Big(\product{R_{\diamond}}{v^*_{\star}-v^*_{\diamond}}_V -
%    \sum_{\zeta\in\EE_{V,\diamond}}\product{J_{\diamond}}{v^*_{\star}-v^*_{\diamond}}_{\zeta\backslash\Gamma}\Big)
%   \lesssim \Big(\sum_{T\in\RR_\diamond}\eta_\diamond(T)^2\Big)^{1/2}\,\enorm{v_\star}.
% \end{align*}
% Inserting this and~\eqref{eq:dlr:dp2} into~\eqref{eq:dlr:dp1}, we derive
Combining~\eqref{eq:dlr:dp4}--\eqref{eq:dlr:dp5} with~\eqref{eq:dlr:dp2}, we obtain
\begin{align*}
 \AA(u_\star-u_\diamond,v_{\star})
 \lesssim\bigg[
 \Big(\sum_{T\in\TT_{\star}}h_T^2\,\enorm{u_\star-u_\diamond}_T^2\Big)^{1/2}
 + \Big(\sum_{T\in\RR_\diamond}\eta_\diamond(T)^2\Big)^{1/2}\bigg]
 \,\enorm{v_{\star}}.
\end{align*}
Finally, ellipticity of $\AA(\cdot,\cdot)$ and the choice of $v_\star = u_\star-u_\diamond$ 
show
\begin{align*}
 \enorm{u_\star-u_\diamond}^2
 \lesssim \AA(u_\star-u_\diamond,v_{\star})
 \lesssim \bigg[
 \sum_{T\in\TT_{\star}}h_T^2\,\enorm{u_\star-u_\diamond}_T^2
 + \sum_{T\in\RR_\diamond}\eta_\diamond(T)^2\bigg]^{1/2}
 \,\enorm{u_\star-u_\diamond}.
\end{align*}
This proves~\eqref{eq:prop:dlr} and concludes the proof. 
% From~\eqref{eq:prop:dlr} and
% \begin{align*}
%  \c{bil}\sum_{T\in\TT_{\star}}h_T^2\,\enorm{u_\star-u_\diamond}_T^2
%  \le \c{bil}\norm{h_\star}{L^\infty(\Omega)}^2\, \enorm{u_\star-u_\diamond}^2
% \end{align*}
% the discrete reliability (A4) follows, if $\c{bil}\norm{h_\star}{L^\infty(\Omega)}^2\le 1/2$.
% This concludes the proof.
\end{proof}

%===============================================================================
\subsection{Proof of Theorem~\ref{theorem:mns}}
%===============================================================================
\label{section:theorem}%
Suppose that the initial triangulation $\TT_0$ is sufficiently fine such that the following assumptions~(i)--(iii) are satisfied:
\begin{itemize}
\item[\rm(i)] For all $\TT_\star\in\refine(\TT_0)$, the FVM system~\eqref{eq:disc_systemfvem} is well-posed. In particular, Lemma~\ref{lemma:A1-A2} proves that stability~(A1) and reduction~(A2) are satisfied.
\item[\rm(ii)] Proposition~\ref{prop:mns:linear} is valid and, in particular, the general quasi-orthogonality~(A3) is satisfied.
\item[\rm(iii)] The constant $\c{bil}$ from Proposition~\ref{prop:dlr} satisfies $\c{bil}\norm{h_0}{L^\infty(\Omega)}\le1/2$, so that Proposition~\ref{prop:dlr}, in fact, proves the discrete reliability~(A4).
\end{itemize}
Finally, let $\widetilde\MM_\ell\subseteq\TT_\ell$ be a set of minimal cardinality which satisfies the D\"orfler marking~\eqref{eq:doerfler:eta} for the error estimator. Then, the additional assumption of Theorem~\ref{theorem:mns} (ii) and the choice of the marked elements $\MM_\ell^\eta\subseteq\MM_\ell\subseteq\TT_\ell$ in Algorithm~\ref{algorithm:mns} imply that $\#\MM_\ell \le \Cmns\,\#\MM_\ell^\eta \le \Cmns\Cmark\,\#\widetilde\MM_\ell$. Altogether, the assumptions of~\cite[Theorem~4.1]{axioms} are fulfilled, and~\eqref{eq:mns:linear}--\eqref{eq:mns:optimal} follow for our adaptive FVM of Algorithm~\ref{algorithm:mns}.\qed

\begin{figure}
\begin{center}
	\subfigure[\label{subfig:bsp1meshT0}$\TT_0$ (16 elements).]
	{\includegraphics[width=0.3\textwidth]
	{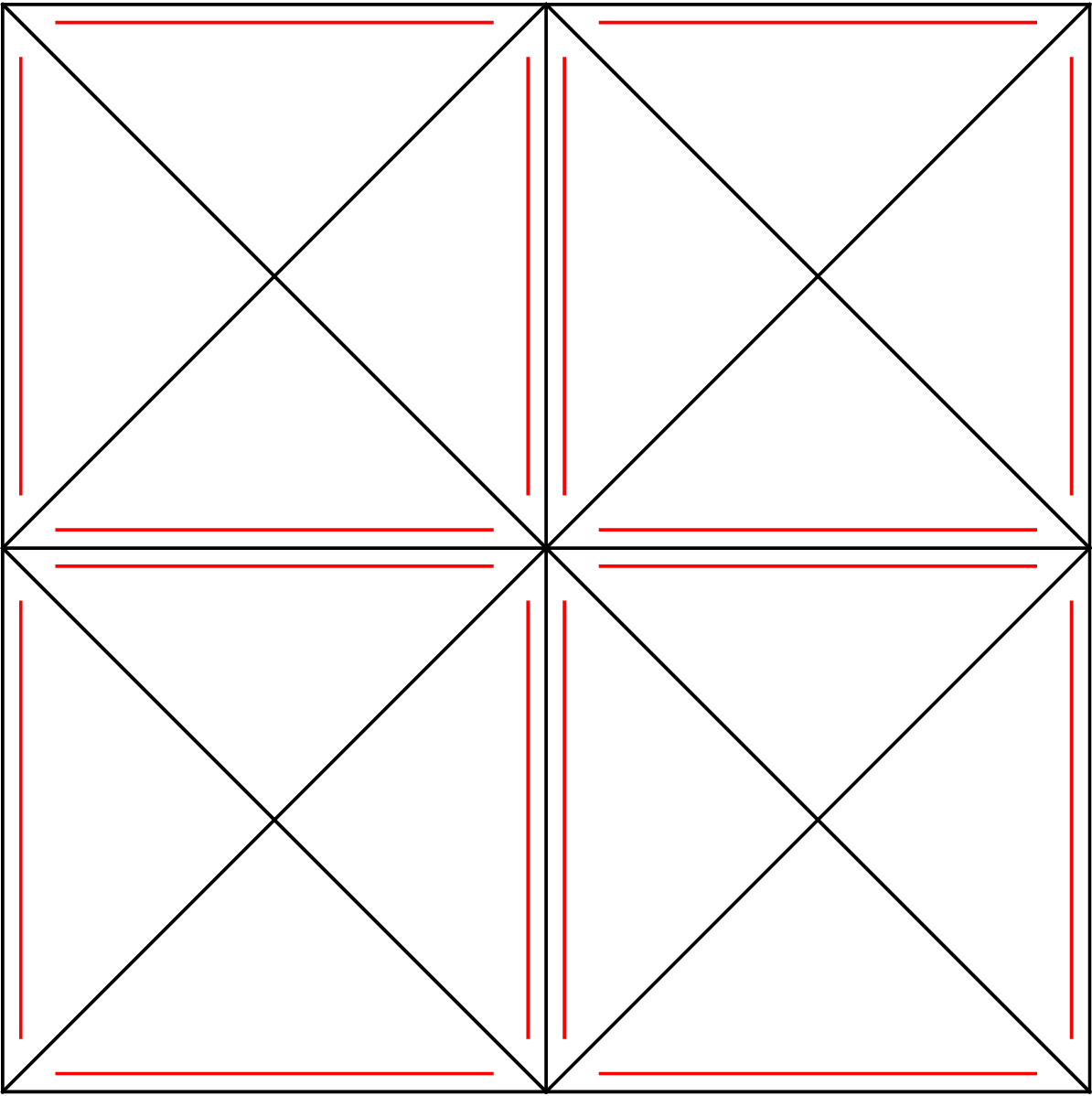}}
	\hspace{0.2\textwidth}
	\subfigure[\label{subfig:bsp1meshT8}$\TT_8$ (216 elements).]
	{\includegraphics[width=0.3\textwidth]
	{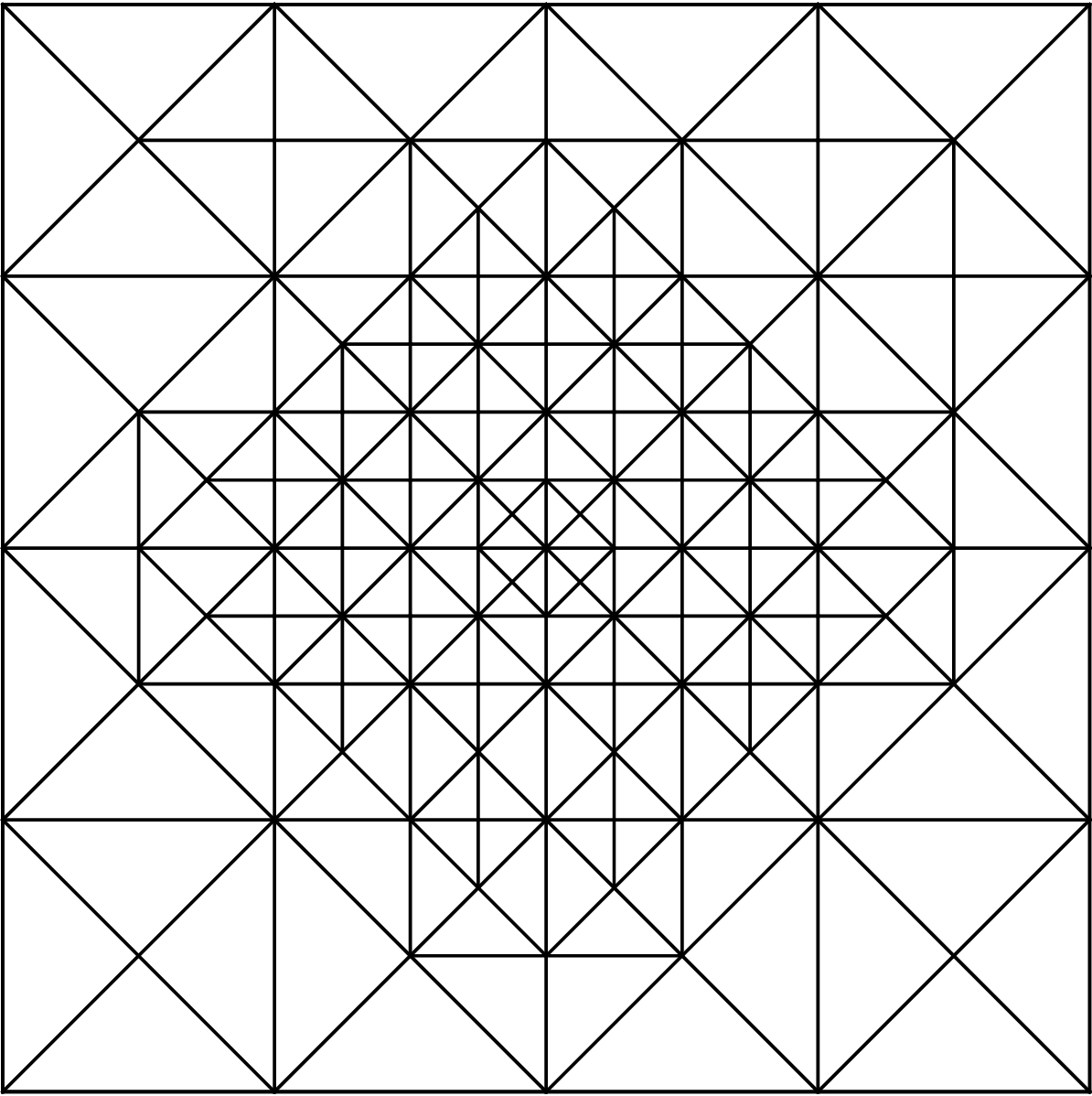}}
	\subfigure[\label{subfig:bsp1meshT16}$\TT_{16}$ (3838 elements).]
	{\includegraphics[width=0.3\textwidth]
	{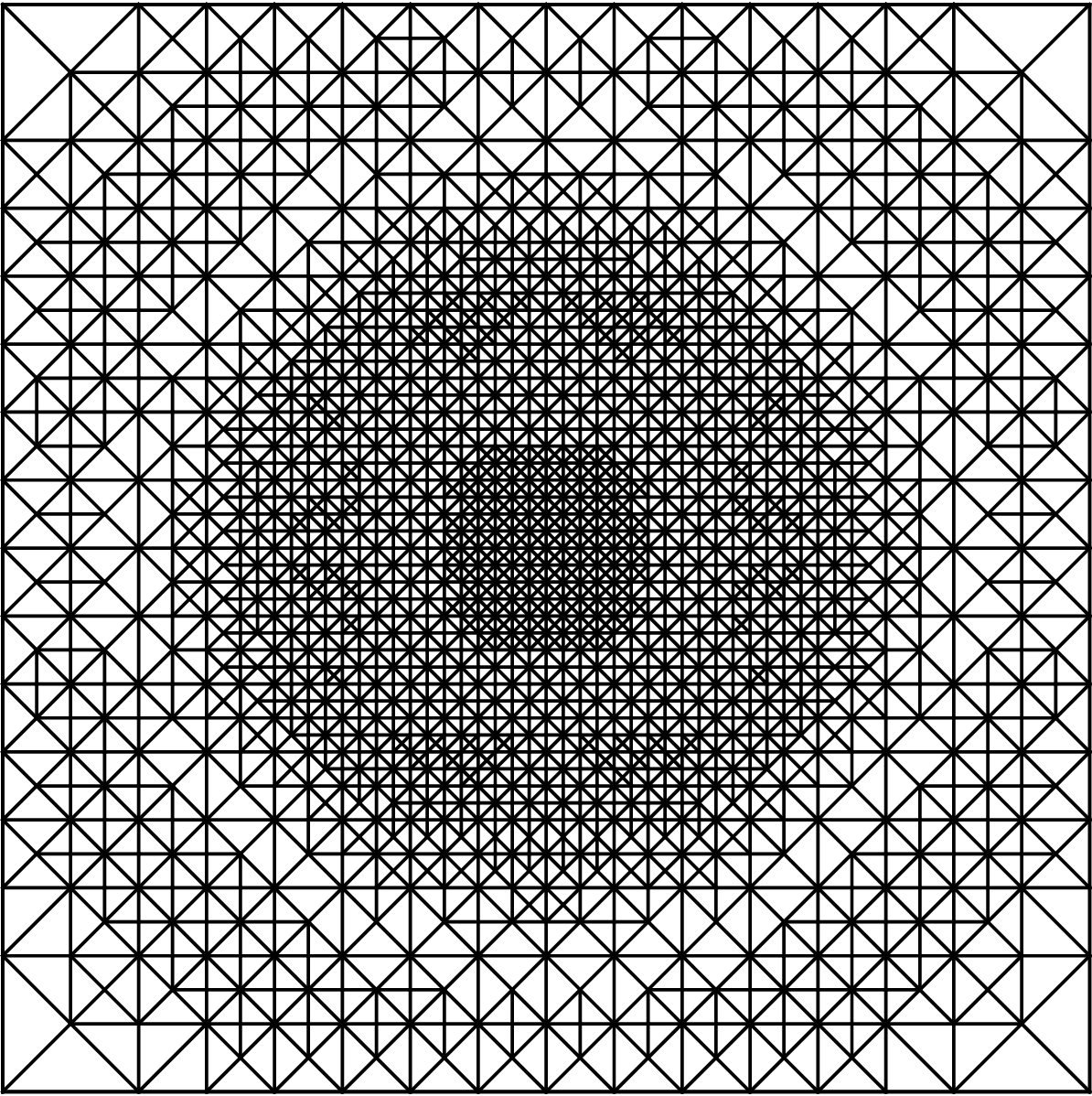}}
	\hspace{0.05\textwidth}
	\subfigure[\label{subfig:bsp1sol}Solution ($\TT_{16}$).]
	{\includegraphics[width=0.46\textwidth]
	{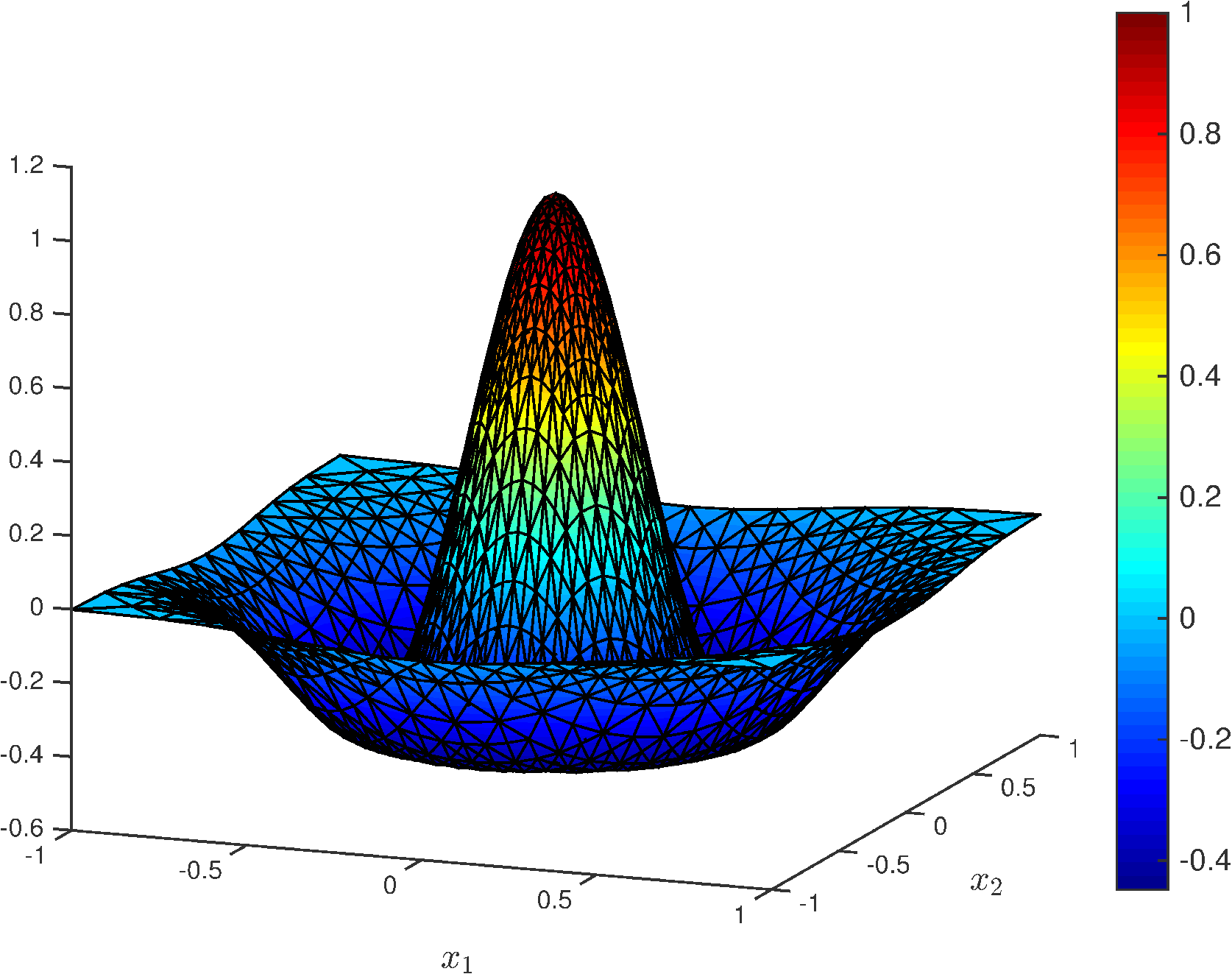}}
\end{center}
\caption{\label{fig:bsp1meshsol}%
Experiment with smooth solution from Section~\ref{ex:bsp1}: Initial triangulation $\TT_0$ with 
NVB reference edges as well as adaptively generated meshes 
$\TT_8$ resp.\ $\TT_{16}$, and discrete FVM solution calculated on $\TT_{16}$.}
\end{figure}
\begin{figure}
\begin{center}
\begin{psfrags}%
\psfragscanon%
%
% text strings:
\psfrag{s12}[l][l]{\scriptsize$1$}%
\psfrag{s04}[l][l]{\small $\eta_\ell$ (uni.)}%
\psfrag{s08}[l][l]{\small $\osc_\ell$ (ada.)}%
\psfrag{s14}[l][l]{\scriptsize$1$}%
\psfrag{s05}[l][l]{\small $E_\ell$ (uni.)}%
\psfrag{s07}[l][l]{\small $\eta_\ell$ (ada.)}%
\psfrag{s01}[l][l]{\small $\osc_\ell$ (uni.)}%
\psfrag{s02}[l][l]{\small $E_\ell$ (ada.)}%
\psfrag{s09}[l][l]{\scriptsize$1$}%
\psfrag{s10}[l][l]{\scriptsize$\; 1/2$}%
\psfrag{s06}[t][t]{\small number of elements}%
\psfrag{s11}[b][b]{\small error, estimator, oscillation}%
%
% axes ticklabel color:
\color[rgb]{0.15,0.15,0.15}%
%
% xticklabels:
\psfrag{x01}[t][t]{\scriptsize${10^{1}}$}%
\psfrag{x02}[t][t]{\scriptsize${10^{2}}$}%
\psfrag{x03}[t][t]{\scriptsize${10^{3}}$}%
\psfrag{x04}[t][t]{\scriptsize${10^{4}}$}%
\psfrag{x05}[t][t]{\scriptsize${10^{5}}$}%
\psfrag{x06}[t][t]{\scriptsize${10^{6}}$}%
\psfrag{x07}[t][t]{\scriptsize${10^{7}}$}%
%                  
% yticklabels:     
\psfrag{v01}[r][r]{\scriptsize${10^{-3}}$}%
\psfrag{v02}[r][r]{\scriptsize${10^{-2}}$}%
\psfrag{v03}[r][r]{\scriptsize${10^{-1}}$}%
\psfrag{v04}[r][r]{\scriptsize${10^{0}}$}%
\psfrag{v05}[r][r]{\scriptsize${10^{1}}$}%
\psfrag{v06}[r][r]{\scriptsize${10^{2}}$}%
\psfrag{v07}[r][r]{\scriptsize${10^{3}}$}%
%
% Figure:
\includegraphics[width=0.8\textwidth]{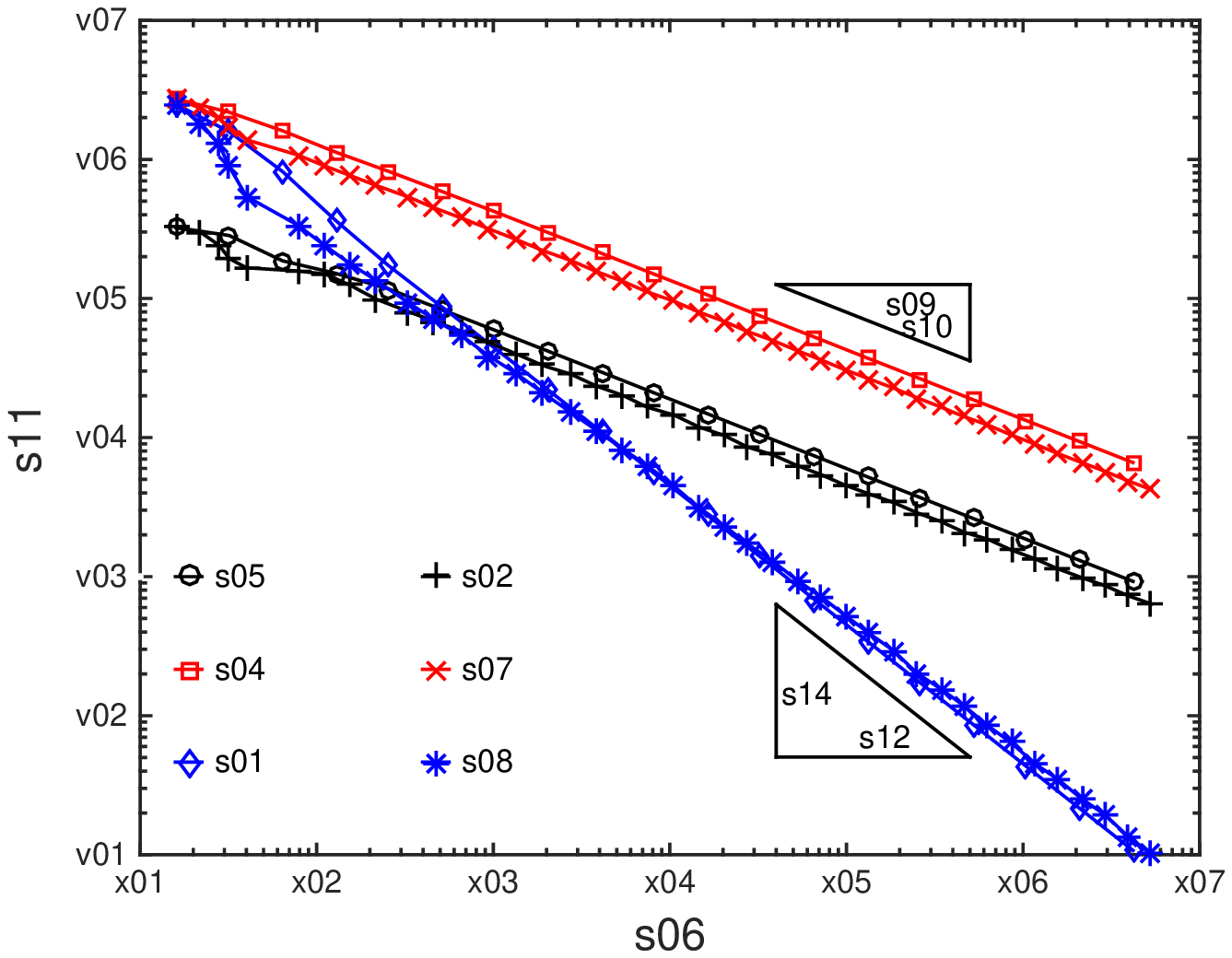}%
\end{psfrags}%

\end{center}
\caption{\label{fig:bsp1error}%
Experiment with smooth solution from Section~\ref{ex:bsp1}: 
Error in the energy norm $E_\ell:=\enorm{u-u_\ell}$, weighted-residual error estimator
$\eta_\ell$, and data oscillations $\osc_\ell$ for uniform and adaptive
mesh-refinement.}
\end{figure}
\begin{figure}
\begin{center}
	\subfigure[\label{subfig:bsp2meshT0}$\TT_0$ (12 elements).]
	{\includegraphics[width=0.3\textwidth]
	{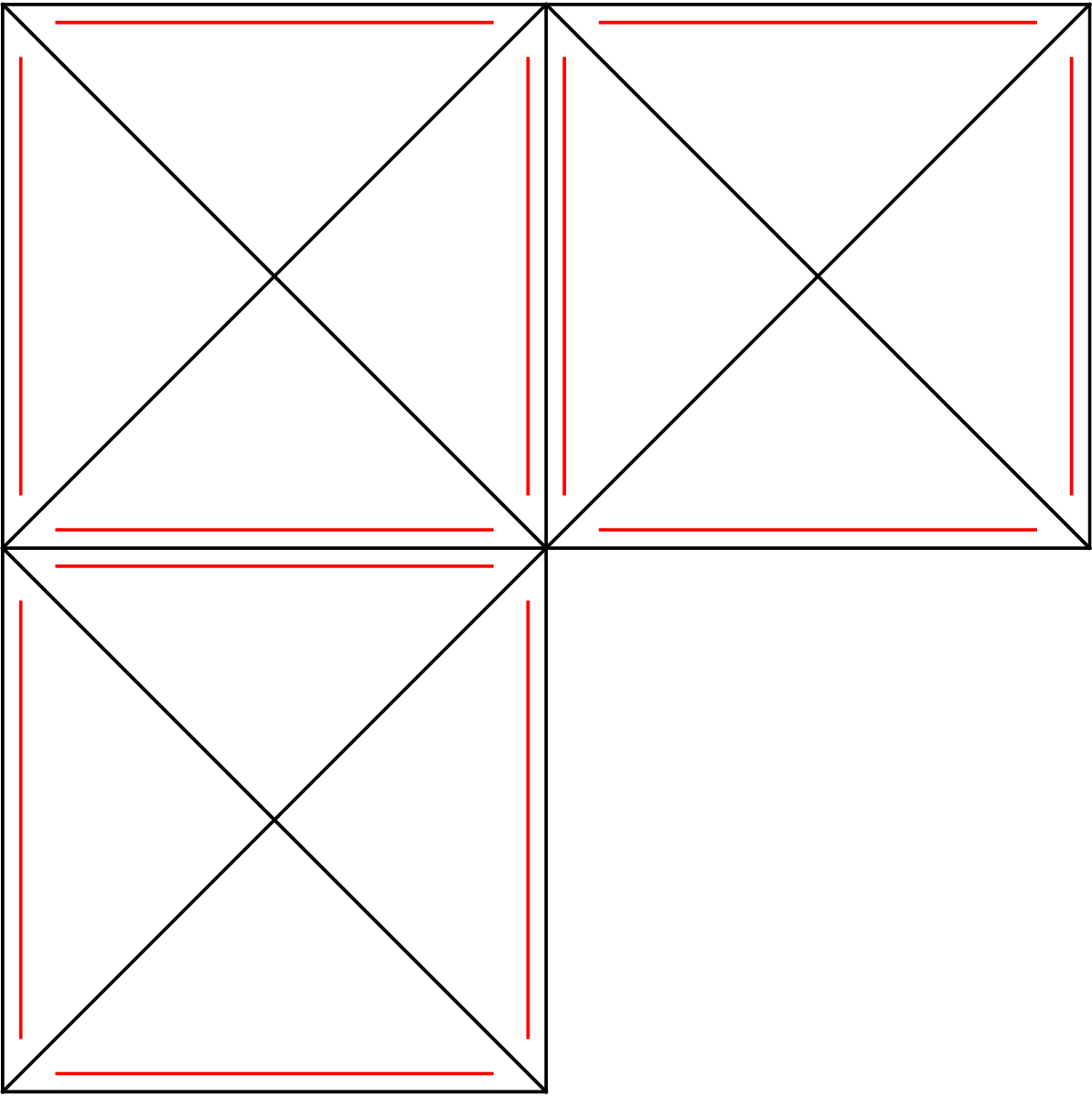}}
	\hspace{0.2\textwidth}
	\subfigure[\label{subfig:bsp2meshT8}$\TT_8$ (204 elements).]
	{\includegraphics[width=0.3\textwidth]
	{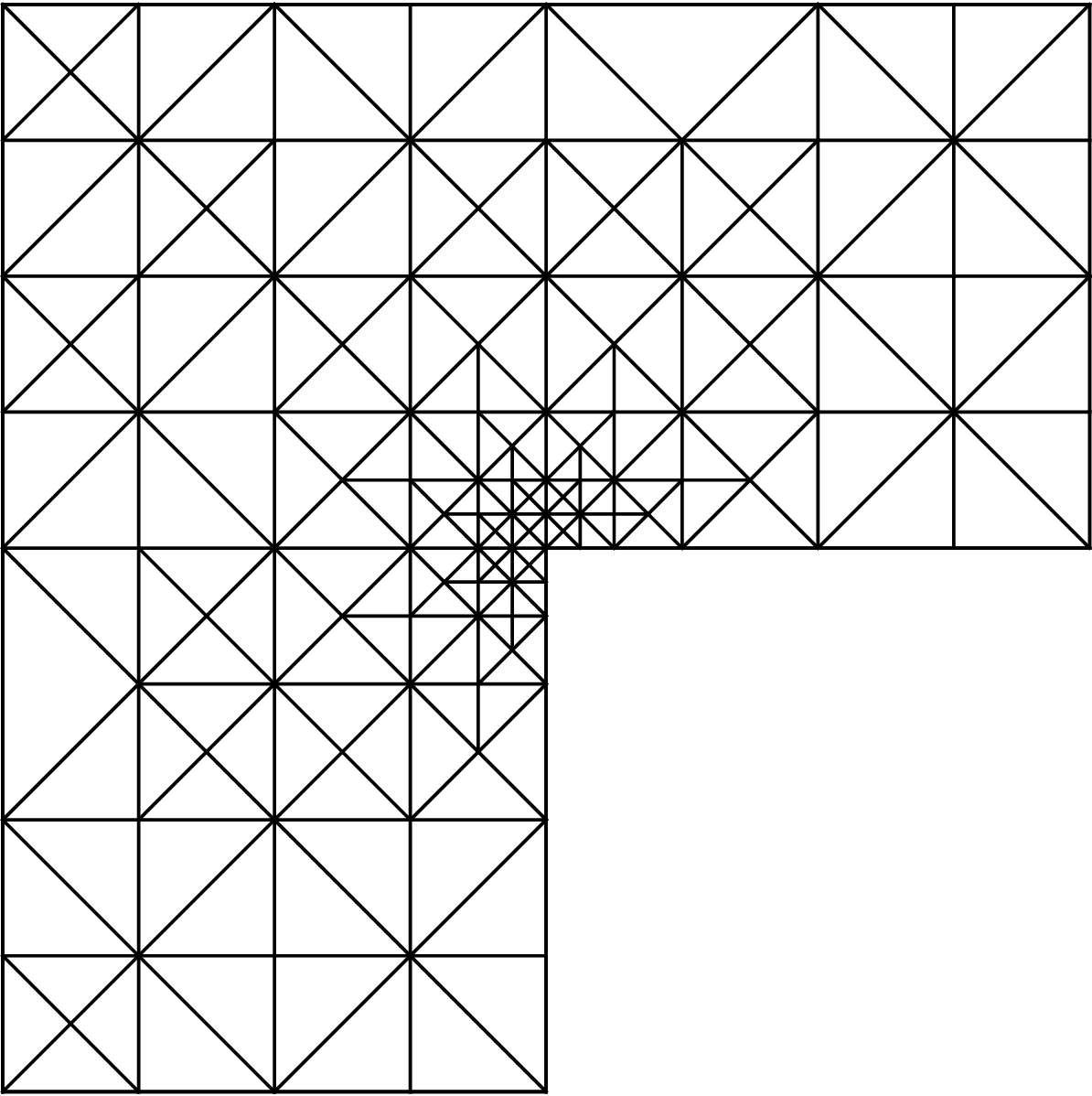}}
	\subfigure[\label{subfig:bsp2meshT16}$\TT_{16}$ (2451 elements).]
	{\includegraphics[width=0.3\textwidth]
	{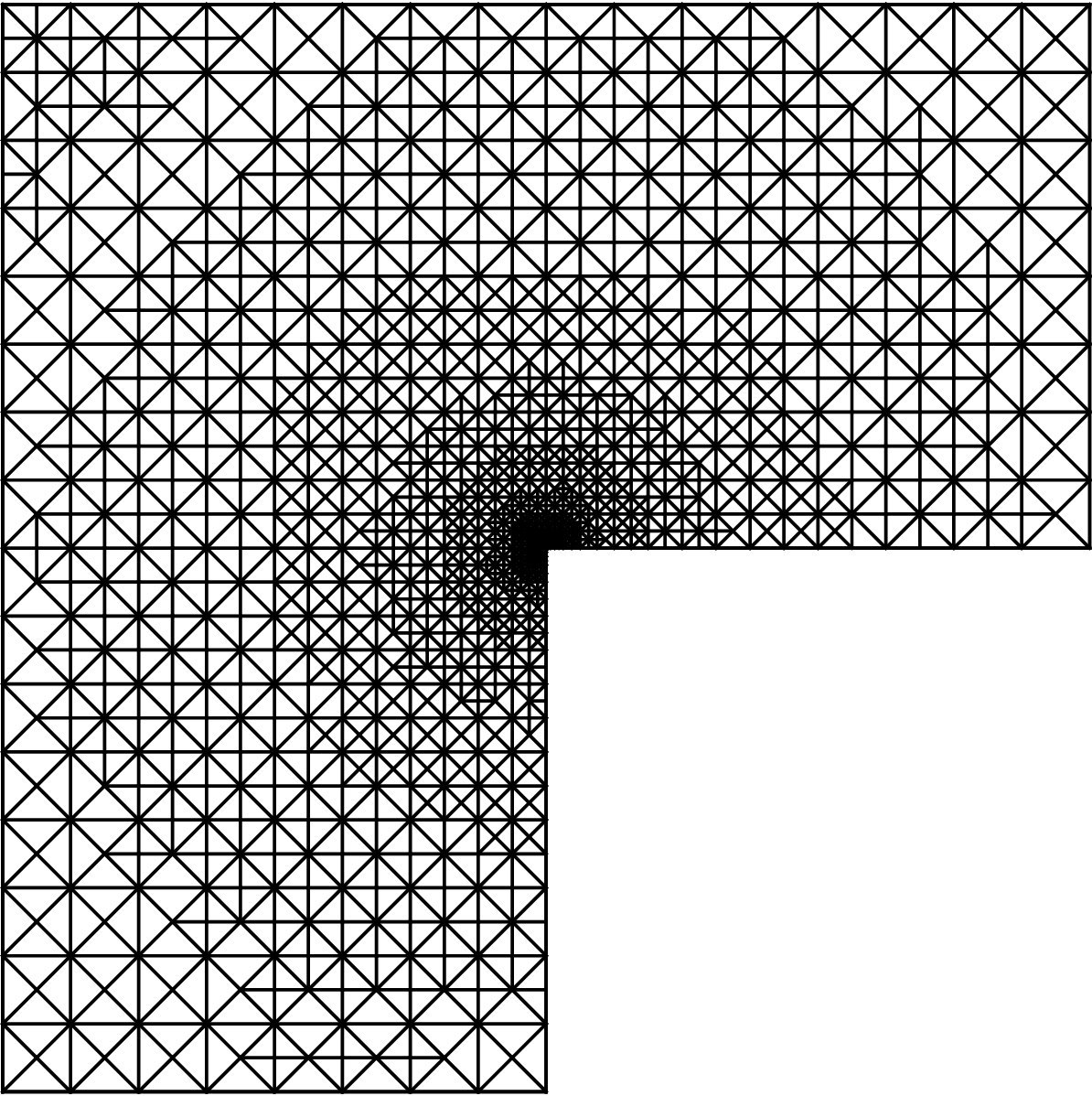}}
	\hspace{0.05\textwidth}
	\subfigure[\label{subfig:bsp2sol}Solution ($\TT_{16}$).]
	{\includegraphics[width=0.46\textwidth]
	{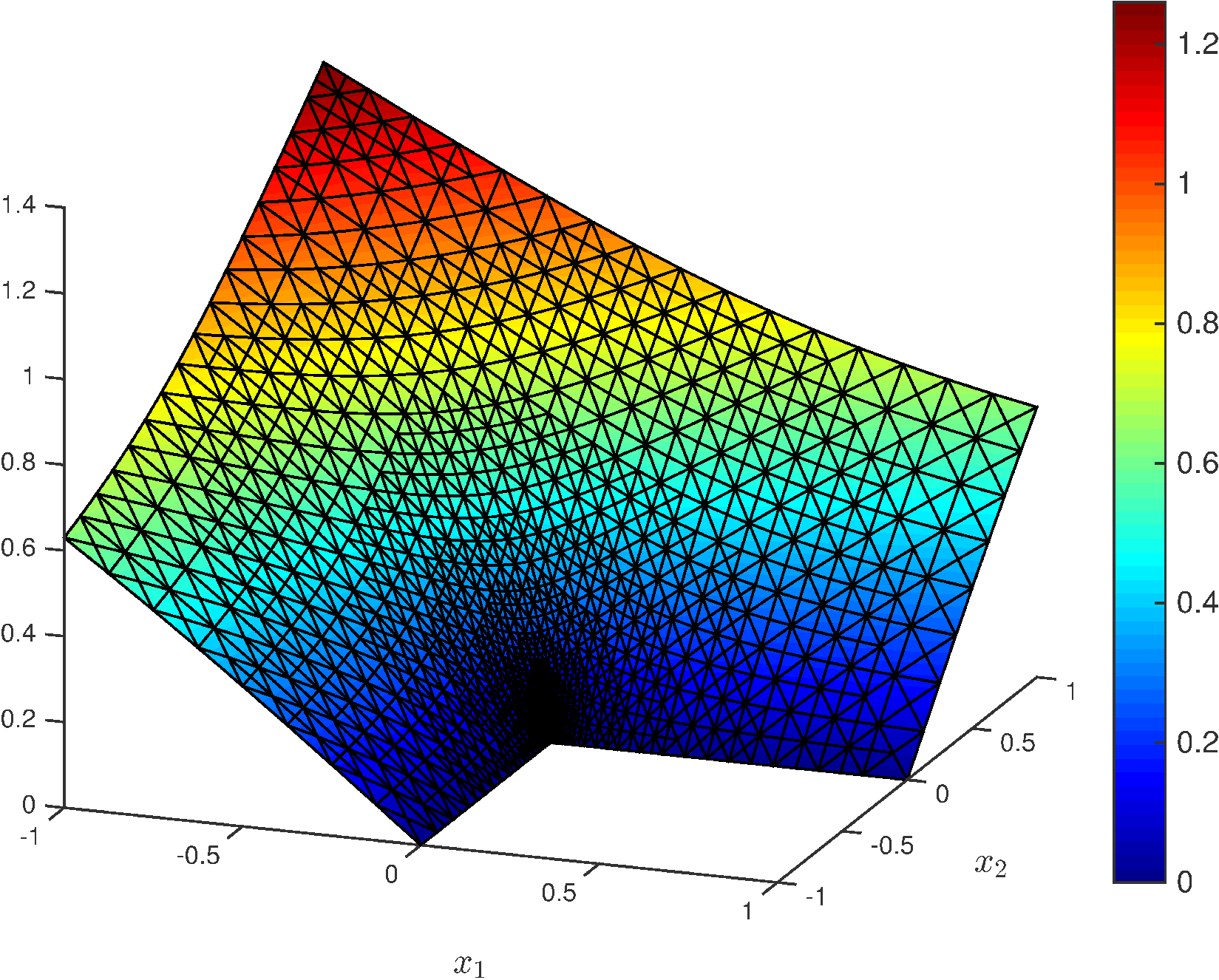}}
\end{center}
\caption{\label{fig:bsp2meshsol}%
Experiment with singular solution from Section~\ref{ex:bsp2}: Initial triangulation $\TT_0$ with 
NVB reference edges as well as adaptively generated meshes 
$\TT_8$ resp.\ $\TT_{16}$, and discrete FVM solution calculated on $\TT_{16}$.}
\end{figure}
\begin{figure}
\begin{center}
\begin{psfrags}%
\psfragscanon%
%
% text strings:
\psfrag{s14}[l][l]{\small $\osc_\ell$ (uni.)}%
\psfrag{s02}[t][t]{\small number of elements}%
\psfrag{s06}[l][l]{\small $E_\ell$ (uni.)}%
\psfrag{s08}[l][l]{\small $\eta_\ell$ (uni.)}%
\psfrag{s12}[l][l]{\small $E_\ell$ (ada.)}%
\psfrag{s03}[b][b]{\small error, estimator, oscillation}%
\psfrag{s09}[l][l]{\scriptsize $1/2$}%
\psfrag{s15}[l][l]{\small $\osc_\ell$ (ada.)}%
\psfrag{s01}[l][l]{\scriptsize $1$}%
\psfrag{s10}[l][l]{\scriptsize $1$}%
\psfrag{s13}[l][l]{\scriptsize $\;1/3$}%
\psfrag{s11}[l][l]{\small $\eta_\ell$ (ada.)}%
\psfrag{s16}[l][l]{\scriptsize $1$}%
\psfrag{s05}[l][l]{\scriptsize $1$}%
%
% axes ticklabel color:
\color[rgb]{0.15,0.15,0.15}%
%
% xticklabels:
\psfrag{x01}[t][t]{\scriptsize${10^{1}}$}%
\psfrag{x02}[t][t]{\scriptsize${10^{2}}$}%
\psfrag{x03}[t][t]{\scriptsize${10^{3}}$}%
\psfrag{x04}[t][t]{\scriptsize${10^{4}}$}%
\psfrag{x05}[t][t]{\scriptsize${10^{5}}$}%
\psfrag{x06}[t][t]{\scriptsize${10^{6}}$}%
\psfrag{x07}[t][t]{\scriptsize${10^{7}}$}%
%
% yticklabels:
\psfrag{v01}[r][r]{\scriptsize${10^{-6}}$}%
\psfrag{v02}[r][r]{\scriptsize${10^{-5}}$}%
\psfrag{v03}[r][r]{\scriptsize${10^{-4}}$}%
\psfrag{v04}[r][r]{\scriptsize${10^{-3}}$}%
\psfrag{v05}[r][r]{\scriptsize${10^{-2}}$}%
\psfrag{v06}[r][r]{\scriptsize${10^{-1}}$}%
\psfrag{v07}[r][r]{\scriptsize${10^{0}}$}%
\psfrag{v08}[r][r]{\scriptsize${10^{1}}$}%
%
% Figure:
\includegraphics[width=0.8\textwidth]{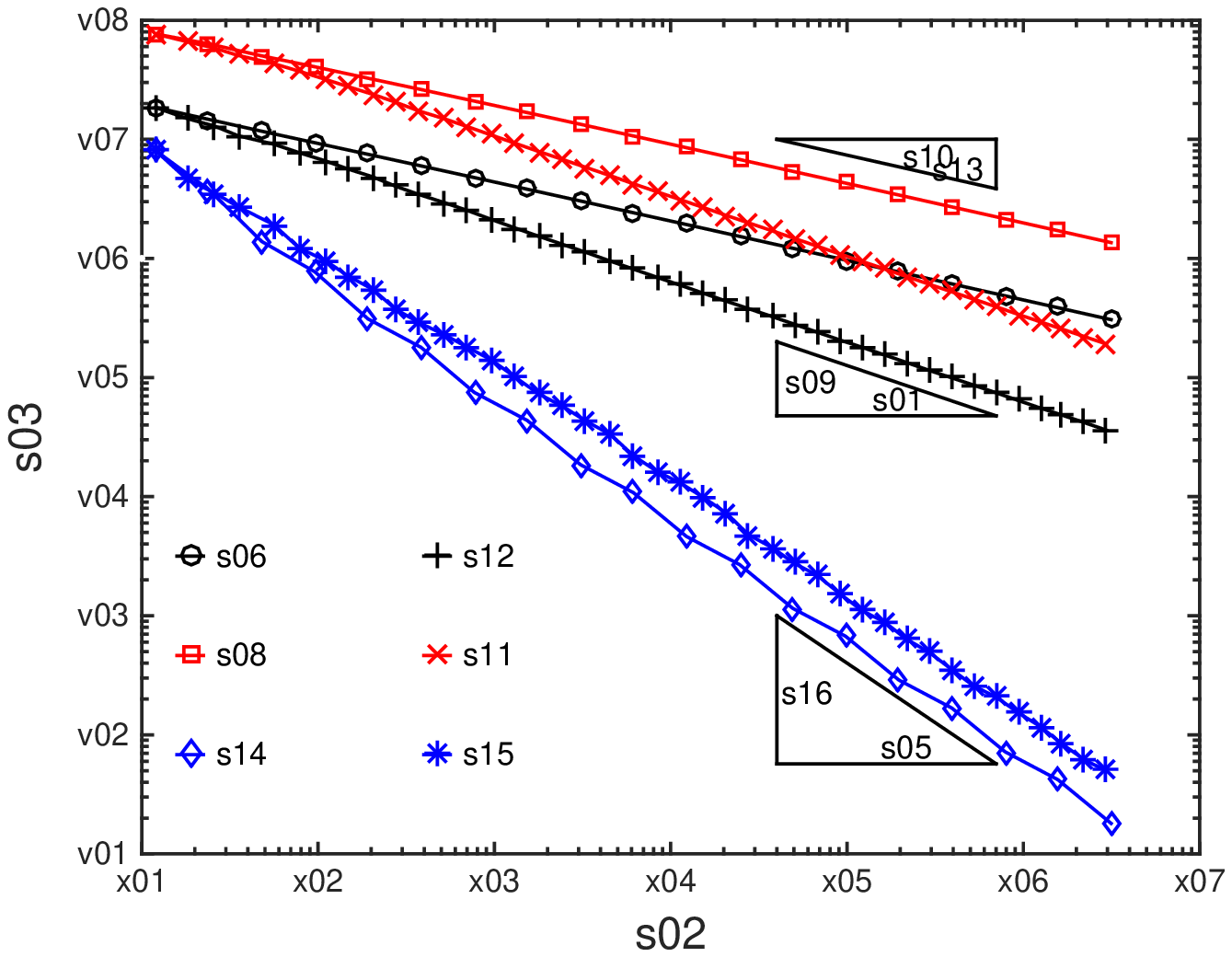}%
\end{psfrags}%
\end{center}
\caption{\label{fig:bsp2error}%
Experiment with smooth solution from Section~\ref{ex:bsp2}: 
Error in the energy norm $E_\ell:=\enorm{u-u_\ell}$, weighted-residual error estimator
$\eta_\ell$, and data oscillations $\osc_\ell$ for uniform and adaptive
mesh-refinement.}
\end{figure}

%%%%%%%%%%%%%%%%%%%%%%%%%%%%%%%%%%%%%%%%%%%%%%%%%%%%%%%%%%%%%%%%%%%%%%%%%%%%%%%%
%%%%%%%%%%%%%%%%%%%%%%%%%%%%%%%%%%%%%%%%%%%%%%%%%%%%%%%%%%%%%%%%%%%%%%%%%%%%%%%%
\section{Numerical experiments}
%%%%%%%%%%%%%%%%%%%%%%%%%%%%%%%%%%%%%%%%%%%%%%%%%%%%%%%%%%%%%%%%%%%%%%%%%%%%%%%%
%%%%%%%%%%%%%%%%%%%%%%%%%%%%%%%%%%%%%%%%%%%%%%%%%%%%%%%%%%%%%%%%%%%%%%%%%%%%%%%%

\noindent
In this section, we illustrate the performance of Algorithm~\ref{algorithm:mns}
with $\theta=0.5=\theta'$ for two examples. In extension of our theory, we consider
the model problem~\eqref{eq:model} with inhomogeneous Dirichlet boundary conditions. 
The numerical experiments are conducted in {\sc Matlab} on a standard
laptop with a dual core $2.8$ GHz processor and $16$ GB memory. 

%%%%%%%%%%%%%%%%%%%%-BSP1-%%%%%%%%%%%%%%%%%%%%%%%%%%%%%%%%%%%
\subsection{Experiment with smooth solution}
\label{ex:bsp1}
On the square $\Omega=(-1,1)^2$, 
we prescribe
the exact solution $u(x_1,x_2) = (1-10x_1^2-10x_2^2)e^{-5(x_1^2+x_2^2)}$
with $x=(x_1,x_2)\in\R^2$.
We choose the diffusion matrix 
\begin{align*}
	\A=  \left ( \begin{array}{rr}
  10+\cos x_1 & 9\,x_1 x_2 \\
  9\,x_1 x_2 & \;10+\sin x_2
  \end{array}\right),
\end{align*}
so that~\eqref{eq:A} holds with $\lambda_{\min}=0.82293$ and
$\lambda_{\max}=10.84096$.
The right-hand side $f$ is calculated appropriately.
The uniform initial mesh $\TT^{(0)}$ consists of $16$ triangles; see
Figure~\ref{fig:bsp1meshsol}\subref{subfig:bsp2meshT0}. 
In Figure~\ref{fig:bsp1meshsol}\subref{subfig:bsp1meshT8}
and~\ref{fig:bsp1meshsol}\subref{subfig:bsp1meshT16} we see
an adaptively generated mesh after $8$ and $16$ refinements, respectively.
Figure~\ref{fig:bsp1meshsol}\subref{subfig:bsp1sol} plots the 
smooth solution on the mesh $\TT_{16}$.
As $u$ is smooth, uniform and adaptive mesh-refinement lead to the optimal convergence order $\OO(N^{-1/2})$ with respect to the number $N$ of elements; see 
Figure~\ref{fig:bsp1error}.
The oscillations are of higher order and decrease with $\OO(N^{-1})$.
%Additionally, we observe that the weighted-residual error estimator is reliable and efficient.
Table~\ref{tab:bsp}\subref{tab:bsp1} shows the experimental validation of the additional assumption in Theorem~\ref{theorem:mns}~{\rm(ii)} that marking for the data oscillations is negligible.

%%%%%%%%%%%%%%%%%%%%-BSP2-%%%%%%%%%%%%%%%%%%%%%%%%%%%%%%%%%%%
\subsection{Experiment with generic singularity}
\label{ex:bsp2}
On the L-shaped domain $\Omega=\linebreak(-1,1)^2\backslash \big([0,1]\times[-1,0]\big)$,
we prescribe the exact solution 
$u(x_1,x_2) = r^{2/3}\sin(2\varphi/3)$ 
in polar coordinates $r\in\R_0^+$, $\varphi\in[0,2\pi[$,
and $(x_1,x_2) = r(\cos\varphi,\sin\varphi)$.
Then, $u$ has a generic singularity at the reentrant corner $(0,0)$, which
leads to $u\in H^{1+2/3-\varepsilon}(\Omega)$ for all
$\varepsilon>0$.
We choose the diffusion matrix 
\begin{align*}
	\A=  \left ( \begin{array}{rr}
  5+(x_1^2+x_2^2)\cos x_1 & (x_1^2+x_2^2)^2 \\
  (x_1^2+x_2^2)^2 & \;5+(x_1^2+x_2^2)\sin x_2
  \end{array}\right)
\end{align*}
so that~\eqref{eq:A} holds with $\lambda_{\min}=0.46689$ and
$\lambda_{\max}=5.14751$.
The right-hand side $f$ is calculated appropriately.
The uniform initial mesh $\TT^{(0)}$ consists of $12$ triangles. Some further adaptively generated meshes together with a plot of the discrete solution are shown in Figure~\ref{fig:bsp2meshsol}.

For uniform mesh refinement, we observe the expected suboptimal convergence order of $\OO(N^{-1/3})$, while adaptive mesh-refinement regains  the optimal convergence order of $\OO(N^{-1/2})$;
see Figure~\ref{fig:bsp2error}. As in the experiment of Section~\ref{ex:bsp1}, the oscillations are of higher order $\OO(N^{-1})$.
%, and the error estimator is reliable and efficient.
See Table~\ref{tab:bsp}\subref{tab:bsp2} for the experimental validation of the additional assumption in Theorem~\ref{theorem:mns}~{\rm(ii)} that marking for the data oscillations is negligible.

\begin{table}[!t]\small
\begin{center}
\subfigure[\label{tab:bsp1}Section~\ref{ex:bsp1}.]{%
\begin{tabular}{r|rcc}
\hline 
$\ell$  &  $\#\TT_\ell$  & $\#\MM_\ell/\#\MM_\ell^\eta $ & $ \osc(\MM_\ell^\eta)^2 / \osc_\ell^2 $ \\
\hline 
\hline 
  0 &       16 &        1.000 &      0.634 \\  
  1 &       22 &        1.000 &      0.613 \\  
  2 &       28 &        1.000 &      0.704 \\  
  3 &       32 &        1.000 &      0.769 \\  
  4 &       40 &        1.214 &      0.338 \\  
  5 &       78 &        1.111 &      0.449 \\  
  6 &      112 &        1.133 &      0.292 \\  
  7 &      156 &        1.119 &      0.410 \\  
  8 &      216 &        1.062 &      0.393 \\  
  9 &      331 &        1.198 &      0.263 \\  
 10 &      460 &        1.014 &      0.474 \\  
 11 &      660 &        1.049 &      0.371 \\  
 12 &      944 &        1.027 &      0.430 \\  
 13 &     1,340 &       1.025 &      0.404 \\  
 14 &     1,914 &       1.019 &      0.383 \\  
 15 &     2,752 &       1.026 &      0.374 \\  
 16 &     3,838 &       1.015 &      0.358 \\  
 17 &     5,428 &       1.003 &      0.449 \\  
 18 &     7,430 &       1.013 &      0.359 \\  
 19 &    10,572 &       1.003 &      0.445 \\  
 20 &    14,462 &       1.019 &      0.322 \\  
 21 &    20,264 &       1.004 &      0.431 \\  
 22 &    27,532 &       1.004 &      0.455 \\  
 23 &    38,402 &       1.010 &      0.323 \\  
 24 &    52,366 &       1.000 &      0.539 \\  
 25 &    72,386 &       1.007 &      0.401 \\  
 26 &    98,144 &       1.000 &      0.509 \\  
 27 &   135,076 &       1.004 &      0.445 \\  
 28 &   184,006 &       1.000 &      0.605 \\  
 29 &   251,668 &       1.002 &      0.475 \\  
 30 &   341,940 &       1.001 &      0.488 \\  
 31 &   461,354 &       1.000 &      0.616 \\  
 32 &   634,922 &       1.004 &      0.415 \\  
 33 &   852,264 &       1.000 &      0.663 \\  
 34 &  1,171,426 &      1.002 &      0.465 \\  
 35 &  1,567,542 &      1.000 &      0.611 \\  
 36 &  2,150,232 &      1.000 &      0.521 \\  
 37 &  2,893,626 &      1.000 &      0.652 \\  
 38 &  3,932,562 &      1.000 &      0.593 \\  
 39 &  5,335,740 &      1.000 &      0.493 \\  
\hline 
\end{tabular}
}
\hfill
\subfigure[\label{tab:bsp2}Section~\ref{ex:bsp2}.]{%
\begin{tabular}{r|rcc}
\hline 
$\ell$  &  $\#\TT_\ell$ & $\#\MM_\ell/\#\MM_\ell^\eta $ & $ \osc(\MM_\ell^\eta)^2 / \osc_\ell^2 $ \\
\hline 
\hline 
  0 &       12 &        1.667 &      0.143 \\  
  1 &       18 &        1.750 &      0.115 \\  
  2 &       26 &        1.400 &      0.108 \\  
  3 &       35 &        1.222 &      0.062 \\  
  4 &       56 &       1.200 &      0.104 \\  
  5 &       78 &        1.643 &      0.028 \\  
  6 &      110 &          1.350 &      0.135 \\  
  7 &      148 &          1.161 &      0.290 \\  
  8 &      204 &        1.111 &      0.268 \\  
  9 &      274 &        1.048 &      0.423 \\  
 10 &      370 &        1.168 &      0.223 \\  
 11 &      525 &       1.069 &      0.324 \\  
 12 &      704 &       1.063 &      0.296 \\  
 13 &      961 &      1.015 &      0.442 \\  
 14 &     1,314 &       1.003 &      0.475 \\  
 15 &     1,784 &         1.037 &      0.345 \\  
 16 &     2,451 &        1.000 &      0.639 \\  
 17 &     3,305 &      1.015 &      0.417 \\  
 18 &     4,562 &      1.000 &      0.595 \\  
 19 &     6,161 &     1.001 &      0.482 \\  
 20 &     8,344 &       1.011 &      0.440 \\  
 21 &    11,316 &        1.000 &      0.635 \\  
 22 &    15,249 &       1.000 &      0.528 \\  
 23 &    20,631 &        1.000 &      0.577 \\  
 24 &    27,742 &       1.014 &      0.451 \\  
 25 &    37,566 &    1.000 &      0.655 \\  
 26 &    50,139 &       1.011 &      0.437 \\  
 27 &    67,722 &       1.000 &      0.571 \\  
 28 &    90,543 &       1.000 &      0.523 \\  
 29 &   121,136 &      1.005 &      0.471 \\  
 30 &   163,221 &      1.000 &      0.715 \\  
 31 &   216,681 &      1.025 &      0.361 \\  
 32 &   292,527 &       1.000 &      0.545 \\  
 33 &   389,411 &     1.000 &      0.582 \\  
 34 &   521,975 &    1.013 &      0.437 \\  
 35 &   699,195 &     1.000 &      0.678 \\  
 36 &   928,417 &    1.012 &      0.418 \\  
 37 &  1,246,972 &     1.000 &      0.561 \\  
 38 &  1,658,877 &     1.000 &      0.585 \\  
 39 &  2,224,754 &      1.003 &      0.481 \\  
 40 &  2,959,035 &      1.000 &      0.659 \\  
\hline 
\end{tabular}
}
\end{center}
\caption{Experimental results on marking strategy: We compute $\widetilde{C}_{MNS}:=\#\MM_\ell/\#\MM_\ell^\eta \le 2$ and see that the additional assumption in Theorem~\ref{theorem:mns}~{\rm(ii)} is experimentally verified. In addition, we compute $\widetilde\theta':=\osc_\ell(\MM_\ell^\eta)^2/\osc_\ell^2 \ge 0.02$, i.e., the choice $\theta = 0.5$, $\theta' = 0.02$ would guarantee $\MM_\ell = \MM_\ell^\eta$ in Algorithm~\ref{algorithm:mns}.}
\label{tab:bsp}
\end{table}

%==============================================================================
% LITERATURE
%==============================================================================
\bibliographystyle{alpha}
\bibliography{afvm}  %,bib/afvm

\end{document}